\newcommand{\version}{Ver.~0.0}
\newcommand{\setversion}[1]{\renewcommand{\version}{Ver.~{#1}}}
\title [On orbits in double flag varieties for symmetric pairs]
{On orbits in double flag varieties for symmetric pairs}
\dedicatory{Dedicated to Jiro Sekiguchi on the occasion of his sixtieth birthday.}
\author{Xuhua He}
\address{
Department of Mathematics\\
Hong-Kong University of Science and Technology\\
Clear Water Bay, Kowloon\\ Hong-Kong}
\email{maxhhe@ust.hk}
\thanks{X. H. is partially supported by HKRGC grant 602011.}
\author{Kyo Nishiyama}
\address{
Department of Physics and Mathematics\\
Aoyama Gakuin University\\
Fuchinobe 5-10-1, Sagamihara 229-8558, Japan}
\email{kyo@gem.aoyama.ac.jp}
\thanks{K. N.  is supported by JSPS Grant-in-Aid for Scientific Research (B) \#{21340006}.}
\author{Hiroyuki Ochiai}
\address{
Faculty of Mathematics\\
Kyushu University\\
744, Motooka, Nishi-ku, Fukuoka 819-0395, Japan}
\email{ochiai@imi.kyushu-u.ac.jp}
\thanks{H. O. is supported by JSPS Grant-in-Aid for Scientific Research (A) \#{19204011}, and JST CREST}
\author{Yoshiki Oshima}
\address{
Kavli IPMU (WPI)\\
The University of Tokyo\\
5-1-5 Kashiwanoha, Kashiwa, Chiba 277-8583, Japan}
\email{yoshiki.oshima@ipmu.jp}
\thanks{Y. O. was supported by Grant-in-Aid for JSPS Fellows (10J00710).}
\subjclass[2000]{Primary 14M15; Secondary 53C35, 14M17}
\keywords{symmetric pair, double flag variety, triple flag variety, spherical
action, Bruhat decomposition}
\theoremstyle{plain}
\newtheorem{theorem}{Theorem}
\newtheorem{proposition}[theorem]{Proposition}
\newtheorem{corollary}[theorem]{Corollary}
\newtheorem{lemma}[theorem]{Lemma}
\newtheorem{assumption}[theorem]{Assumption}
\newtheorem{itheorem}{Theorem}
\theoremstyle{definition}
\theoremstyle{remark}
\newtheorem{remark}[theorem]{\upshape Remark}
\numberwithin{equation}{section}
\numberwithin{theorem}{section}
\newcommand{\bbJ}{\mathbb{J}}
\newcommand{\bbK}{\mathbb{K}}
\newcommand{\bbL}{\mathbb{L}}
\newcommand{\bbG}{\mathbb{G}}
\newcommand{\C}{\mathbb{C}}
\newcommand{\bbP}{\mathbb{P}}
\newcommand{\bbQ}{\mathbb{Q}}
\newcommand{\bbT}{\mathbb{T}}
\newcommand{\bbU}{\mathbb{U}}
\newcommand{\bbv}{\mathbf{v}}
\newcommand{\bbw}{\mathbf{w}}
\newcommand{\lie}[1]{\mathfrak{#1}}
\newcommand{\Lie}{\mathop\mathrm{Lie}\nolimits{}}
\newcounter{thmenum}
\newenvironment{thmenumerate}{%
\begin{list}{$(\thethmenum)$}{%
\usecounter{thmenum}
\setlength{\labelsep}{.5em}
\setlength{\labelwidth}{-7pt}
\setlength{\topsep}{0pt}
\setlength{\partopsep}{0pt}
\setlength{\parsep}{0pt}
\setlength{\leftmargin}{3pt}
\setlength{\rightmargin}{0pt}
\setlength{\itemindent}{\leftmargin}
\setlength{\itemsep}{0pt}
}}
{\end{list}}
\newcommand{\mycomment}[1]{} 
\newlength{\lengthcup}
\newcommand{\diag}{\qopname\relax o{diag}}
\newcommand{\rank}{\qopname\relax o{rank}}
\newcommand{\Aut}{\qopname\relax o{Aut}}
\newcommand{\Ad}{\qopname\relax o{Ad}}
\newcommand{\ad}{\mathop{\mathrm{ad}}\nolimits{}}
\newcommand{\calorbit}{\mathcal{O}}
\newcommand{\GFl}{\mathfrak{X}}
\newcommand{\KFl}{\mathcal{Z}}
\newcommand{\GKFl}[2]{\GFl_{#1}\times\KFl_{#2}}
\newcommand{\bsl}{\backslash}
\newcommand{\psg}{parabolic subgroup }
\newcommand{\psgs}{parabolic subgroups }
\newcommand{\jwjprime}{{}^{J}W^{J'}}
\newcommand{\jwj}[2]{{}^{#1}W^{#2}}
\newcommand{\jbbwj}[2]{{}^{#1}\mathbb{W}^{#2}}
\newcommand{\wP}[1]{P^{#1}}
\newcommand{\wPprime}[1]{\wP{#1}\cap P'}
\newcommand{\wPcap}[2]{\wP{#2}\cap{#1}}
\newcommand{\kgbw}[1]{\mathscr{V}({#1})}
\newcommand{\opposite}[1]{{#1}^{\star}}
\newcommand{\Ptilde}[1]{\tilde{P}_{#1}{}}
\newcommand{\Pmin}{P_{\mathrm{min}}}
\newcommand{\openBL}{B_{L'}^{\star}}
\newcommand{\Adj}{\Ad}
\newcommand{\adj}{\ad}
\newcommand{\bbC}{\C}
\begin{document}

\begin{abstract}
Let $ G $ be a connected, simply connected semisimple algebraic group over the complex number field, and let $ K $ be the fixed point subgroup of an involutive automorphism of $ G $ 
so that $ (G, K) $ is a symmetric pair.   

We take parabolic subgroups $ P $ of $ G $ and $ Q $ of $ K $ respectively and  
consider the product of partial flag varieties $ G/P $ and 
$ K/Q $ with diagonal $ K $-action, which we call 
a \emph{double flag variety for symmetric pair}.  
It is said to be \emph{of finite type} if there are only finitely many $ K $-orbits on it.  

In this paper, we give a parametrization of $ K $-orbits on $ G/P \times K/Q $ in terms of quotient spaces of unipotent groups without assuming the finiteness of orbits.  
If one of $ P \subset G $ or $ Q \subset K $ is a Borel subgroup, the finiteness of orbits is closely related to 
spherical actions.  
In such cases, we give a complete classification of double flag varieties of finite type, namely, we obtain classifications of $ K $-spherical flag varieties $ G/P $ and 
$ G $-spherical homogeneous spaces $ G/Q $.
\end{abstract}

\maketitle



\section*{Introduction}

Let $ G $ be a connected, simply connected semisimple algebraic group over the complex number field $ \bbC $.

Let $ P_i \; (i = 1, 2, \dots, k) $ be parabolic subgroups of $ G $ 
and consider partial flag varieties $ \GFl_{P_i} := G/P_i $ of $ G $.   
We are interested in the product of flag varieties 
$ \GFl_{P_1} \times \GFl_{P_2} \times \cdots \times \GFl_{P_k}, $ on 
which $ G $ acts diagonally.  
We say a multiple flag varieties $ \GFl_{P_1} \times \GFl_{P_2} \times \cdots \times \GFl_{P_k}$ is of {\it finite type} if it admits only finitely many $G$-orbits.
It is an interesting problem to classify multiple flag varieties of finite type.  
According to a result of 
Magyar-Weyman-Zelevinsky (\cite{MWZ.1999, MWZ.2000}), 
$ k $ must be less than or equal to $ 3 $ 
if a multiple flag variety is of finite type and if $ G $ is of classical type.

Let us consider a triple flag variety 
$ \GFl_{P_1} \times \GFl_{P_2} \times \GFl_{P_3} $. 
If $ P_3 = B $ is a Borel subgroup, 
then the triple flag variety is of finite type 
if and only if 
$ \GFl_{P_1} \times \GFl_{P_2} $ 
is a spherical $G$-variety.  
For maximal parabolic subgroups $ P_1 $ and $ P_2 $, 
Littelmann classified such spherical double flag varieties 
(\cite{Littelmann.1994}, see also \cite{Panyushev.1993}).  
For general parabolic subgroups $ P_1 $ and $ P_2 $, 
Stembridge \cite{Stembridge.2003} 
classified them completely.
In \cite{MWZ.1999, MWZ.2000}, 
they classified the triple flag varieties 
$ \GFl_{P_1} \times \GFl_{P_2} \times \GFl_{P_3} $ 
of finite type for $G=SL_n$ and $G=Sp_n$.
They also gave a complete enumeration of the $ G $-orbits on such triple
 flag varieties. 

Let $ K $ be the subgroup of fixed points of a non-trivial involution $ \theta $ of $ G $.
Take parabolic subgroups $ P $ of $ G $ and $ Q $ of $ K $.  
Then we call $ \GKFl{P}{Q} (= G/P \times K/Q) $ a {\it double flag variety for a symmetric pair} $ (G, K) $, 
where $ \KFl_Q := K/Q $ (see \cite{NO.2011}).  
The subgroup $ K $ naturally acts on $ \GKFl{P}{Q} $ diagonally.   
This notion is a generalization of the triple flag varieties 
(see \S~\ref{subsection:triple.flag.variety}), 
and we say it is of \emph{finite type} if there exist only finitely many $ K $-orbits.

In some cases, finiteness of $ G $-orbits on a triple flag variety 
implies finiteness of $ K $-orbits on a double flag variety for $ (G, K) $.  
In \cite{NO.2011}, we investigated such situations and got two sufficient conditions 
for $ \GKFl{P}{Q} $ to be of finite type: 

\begin{itheorem}[{\cite[Theorem~3.1]{NO.2011}}]
\label{NO.theorem:triple.flag.to.double.flag}
Let $ P' $ be a $ \theta $-stable parabolic subgroup of $ G $ such that $ P' \cap K = Q $.  
If the number of $ G $-orbits on $ \GFl_{P} \times \GFl_{\theta(P)} \times \GFl_{P'} $ is finite, 
then there are only finitely many $ K $-orbits on the double flag variety $ \GKFl{P}{Q} $.
\end{itheorem}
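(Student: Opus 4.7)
The plan is to compare $K$-orbits on the double flag variety $G/P \times K/Q$ with $G$-orbits on the triple flag variety $X := G/P \times G/\theta(P) \times G/P'$ by means of an explicit $K$-equivariant map. Define
\[
\phi : G/P \times K/Q \longrightarrow X, \qquad \phi(gP, kQ) := (gP,\ \theta(g)\theta(P),\ kP').
\]
Well-definedness of the third coordinate uses $Q = K \cap P' \subseteq P'$, and that of the second uses $\theta$-equivariance modulo $P$. Since $\theta$ fixes $K$ pointwise, for $h \in K$ one has $\phi(hgP, hkQ) = (hgP,\ h\theta(g)\theta(P),\ hkP')$, so $\phi$ is $K$-equivariant with respect to the diagonal action of $K$ on $X$ coming from $K \subseteq G$.

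I would next verify that $\phi$ induces an \emph{injection} on $K$-orbits. If $\phi(g_1 P, k_1 Q) = h \cdot \phi(g_2 P, k_2 Q)$ for some $h \in K$, comparison of first coordinates yields $g_1 P = h g_2 P$, and comparison of third coordinates yields $k_1 P' = h k_2 P'$; hence $(h k_2)^{-1} k_1 \in K \cap P' = Q$, so $k_1 Q = h k_2 Q$. Thus $(g_1 P, k_1 Q)$ and $(g_2 P, k_2 Q)$ lie in the same $K$-orbit, and it remains to show that the image $Y := \phi(G/P \times K/Q)$ carries only finitely many $K$-orbits. To exploit the hypothesis on the triple flag variety, I would introduce an involution $\sigma$ on $X$, compatible with $\theta$, by
\[
\sigma(aP,\ b\theta(P),\ cP') := (\theta(b)P,\ \theta(a)\theta(P),\ \theta(c)P'),
\]
using that $P'$ is $\theta$-stable. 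One checks directly that $\sigma^{2} = \id$, that $\sigma(g \cdot x) = \theta(g) \cdot \sigma(x)$, and that $Y \subseteq X^{\sigma}$. Consequently $K = G^{\theta}$ acts on $X^{\sigma}$, and it suffices to prove that $X^{\sigma}$ has only finitely many $K$-orbits.

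This final step is the main obstacle and is where the triple-flag hypothesis enters. It amounts to the general principle that, for a $G$-variety with finitely many $G$-orbits carrying a compatible involution $\sigma$, the fixed-point set $X^{\sigma}$ has only finitely many $K$-orbits. I would handle this one $G$-orbit at a time: for $\mathcal{O} = G \cdot x_{0}$ with $x_{0} \in X^{\sigma}$ and stabilizer $H = G_{x_{0}}$, the condition $\sigma(g \cdot x_{0}) = g \cdot x_{0}$ becomes $g^{-1}\theta(g) \in H$, so that $\mathcal{O} \cap X^{\sigma} \cong \{g \in G : g^{-1}\theta(g) \in H\}/H$, and $K$-orbits on this set are parametrized by the $H$-twisted $\theta$-conjugacy classes in the closed subvariety $\{p \in H : p\cdot \theta(p) = e\}$. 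Finiteness of these classes follows from Richardson--Springer theory on involutions in reductive groups applied to the intersections of parabolics occurring as stabilizers; note that the delicate point is that $H$ itself is generally not $\theta$-stable, so one must pass to a suitable reductive subquotient of $H$ carrying an induced involution. Once this finiteness is in hand, the injection established above upgrades it to the desired finiteness of $K$-orbits on $G/P \times K/Q$.
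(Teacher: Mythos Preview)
This theorem is not proved in the present paper; it is quoted from \cite{NO.2011}, so there is no in-paper argument to compare against directly. That said, your overall strategy---embed $G/P \times K/Q$ into the triple flag variety via a $K$-equivariant map $\phi$, observe that the image lies in the fixed locus $X^\sigma$ of a $\theta$-twisted involution, and then deduce finiteness of $K$-orbits on $X^\sigma$ one $G$-orbit at a time---is correct and is essentially the argument of the original reference. Your verification of $K$-equivariance, of injectivity on $K$-orbits, and of $Y \subseteq X^\sigma$ is clean.

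There is, however, an error in your final step. You claim that ``$H$ itself is generally not $\theta$-stable,'' but in fact the opposite holds: since you chose $x_0 \in X^\sigma$, the stabilizer $H = G_{x_0}$ is automatically $\theta$-stable. Indeed, for $h \in H$,
\[
\theta(h)\cdot x_0 \;=\; \theta(h)\cdot \sigma(x_0) \;=\; \sigma(h\cdot x_0) \;=\; \sigma(x_0) \;=\; x_0,
\]
so $\theta(h) \in H$. This removes the ``delicate point'' you flagged and makes the remaining work much more tractable. With $H$ $\theta$-stable, your own computation shows that the $K$-orbits on $\mathcal{O}\cap X^\sigma$ inject into the non-abelian cohomology set
\[
H^1(\langle\theta\rangle, H) \;=\; \{\,p \in H : p\,\theta(p) = e\,\}\big/\bigl(p \sim h^{-1}p\,\theta(h)\bigr),
\]
whose finiteness for a complex linear algebraic group with algebraic involution is standard: reduce via a $\theta$-stable Levi decomposition $H = M \ltimes U$ to the unipotent part (where $H^1$ vanishes, by filtering $U$ by $\theta$-stable vector groups) and the reductive part (where finiteness is classical). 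Your appeal to ``Richardson--Springer theory'' points in the right direction but, as written, does not constitute a proof; once you correct the $\theta$-stability claim, a short cohomological argument as above suffices.
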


\begin{itheorem}[{\cite[Theorem~3.4]{NO.2011}}]
\label{NO.theorem:intersection.of.parabolics}
Let $ P_i \; (i = 1, 2, 3) $ be a parabolic subgroup of $ G $.  
Suppose that $ \GFl_{P_1} \times \GFl_{P_2} \times \GFl_{P_3} $
has finitely many $G$-orbits
and that $Q:= P_1 \cap P_2$ is a parabolic subgroup of $K$.
Then $\GKFl{P_3}{Q}$ has finitely many $K$-orbits.  

Moreover, if $ P_3 $ is a Borel subgroup $ B $ and the product $ P_1 P_2 $ is open in $ G $, 
then the converse is also true, i.e., 
the double flag variety 
$ \GKFl{B}{Q} $ 
is of finite type 
if and only if 
the triple flag variety 
$ \GFl_{P_1} \times \GFl_{P_2} \times \GFl_{B} $ 
is of finite type.
\end{itheorem}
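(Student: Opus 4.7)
The plan is to express both sides as double-coset sets, recognize the $K$-orbits on $\GKFl{P_3}{Q}$ as a distinguished summand of a natural decomposition of $G$-orbits on the triple flag variety, and for the converse to invoke Brion's finiteness theorem for spherical $G$-varieties.

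First I would establish the basic bijection
$$
K\backslash (G/P_3 \times K/Q) \;\cong\; Q\backslash G/P_3,
$$
obtained by moving each $K$-orbit to a representative of the form $(gP_3, eQ)$ and observing that two such representatives are $K$-equivalent exactly when they lie in the same $Q$-double coset on $G$. In parallel, projecting the triple flag variety to $G/P_1 \times G/P_2$, whose $G$-orbits are indexed by $P_1\backslash G/P_2$ with the stabilizer of a representative $(eP_1, wP_2)$ being $P_1 \cap wP_2 w^{-1}$, gives
$$
G\backslash\bigl(\GFl_{P_1}\times\GFl_{P_2}\times\GFl_{P_3}\bigr) \;\cong\; \bigsqcup_{w\in P_1\backslash G/P_2}\bigl(P_1 \cap wP_2 w^{-1}\bigr)\backslash G/P_3.
$$
The summand at $w = e$ is $(P_1\cap P_2)\backslash G/P_3 = Q\backslash G/P_3$, which by the first display is precisely the set of $K$-orbits on $\GKFl{P_3}{Q}$. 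Finiteness of the total decomposition thus forces finiteness of this summand, yielding the forward direction for arbitrary $P_3$.

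For the converse, assume $P_3 = B$ and that $P_1 P_2$ is open in $G$. The openness condition is exactly the statement that the orbit map $G/Q \to G/P_1\times G/P_2$, $gQ\mapsto (gP_1, gP_2)$, is an open immersion onto the open $G$-orbit of $(eP_1,eP_2)$. Finiteness of $K$-orbits on $\GKFl{B}{Q}$ translates, via the first step, into finiteness of $Q\backslash G/B$, equivalently of $B\backslash G/Q$; in particular $G/Q$ carries a dense open $B$-orbit. Pushing this orbit through the open immersion shows that the smooth, irreducible variety $G/P_1 \times G/P_2$ also has a dense open $B$-orbit, i.e.\ is a spherical $G$-variety. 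By Brion's theorem, a spherical $G$-variety admits only finitely many $B$-orbits; hence $B\backslash(G/P_1\times G/P_2)$, which is in obvious bijection with $G\backslash(\GFl_B\times\GFl_{P_1}\times\GFl_{P_2})$, is finite, as required.

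The main obstacle is precisely the converse: to pass from existence of a single open $B$-orbit on $G/P_1\times G/P_2$ to finiteness of \emph{all} $B$-orbits on it one needs a genuine geometric input. The strata indexed by the nontrivial $w \in P_1\backslash G/P_2$ contribute $G$-orbits in the triple flag with stabilizers $P_1 \cap wP_2 w^{-1}$ distinct from $Q$, and these are not directly controlled by finiteness of $Q\backslash G/B$. It is exactly the sphericity–finiteness equivalence of Brion (and Vinberg) that bridges this gap, and any honest proof must either appeal to it or re-derive the analogous statement in this setting.
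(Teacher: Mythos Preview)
The paper does not supply its own proof of this statement: it is quoted verbatim from \cite[Theorem~3.4]{NO.2011} in the introduction and never revisited, so there is no in-paper argument to compare against directly. That said, your argument is correct, and it is precisely the mechanism the authors themselves deploy later for special cases: the proofs of Lemma~\ref{lemma:double.triple} and Lemma~\ref{lemma:double.triple2} both rest on the same open-immersion $G/Q \hookrightarrow G/P_1 \times G/P_2$ coming from the hypothesis that $P_1 P_2$ is open in $G$, together with the Brion--Vinberg equivalence between sphericity and finiteness of Borel orbits (recalled in \S\ref{subsection:spherical}). Your forward direction via the fibering over $P_1\backslash G/P_2$ and reading off the $w=e$ summand is also the standard reduction; it matches the paper's general philosophy of slicing orbit spaces along Bruhat-type decompositions (cf.\ \S\ref{subsection:reduction.to.Bruhat}).

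One small point worth tightening: you should note that the hypothesis ``$Q = P_1\cap P_2$ is a parabolic subgroup of $K$'' is used in the forward direction only to make sense of $\KFl_Q$, not in the combinatorics of the decomposition itself; and in the converse, the assertion that $G/Q \to G/P_1\times G/P_2$ is an \emph{immersion} (not merely a bijection onto its image) deserves a word---it follows since the map is a $G$-equivariant bijective morphism onto the open $G$-orbit, hence an isomorphism onto it in characteristic zero (the paper appeals to this principle via \cite[Theorem~5.2.8]{Springer.1998} in the proof of Lemma~\ref{lemma:isomorphism.by.product.of.exp.map}). With that caveat, your proof is complete and in the same spirit as the paper's internal arguments.
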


Using these two theorems, 
we can produce many examples of double flag varieties of finite type.  
However, a complete classification is not known yet.  

In this paper, we study $ K $-orbit structure on an arbitrary double flag variety $ \GKFl{P}{Q} $ 
which is not necessarily of finite type and, 
as a result, we get some criteria for the finiteness of orbits.
Our method relies on the Bruhat decomposition and 
``KGB-decomposition'' (i.e., the $ K $-orbit decomposition of flag varieties; 
see \S~\ref{subsection:reduction.to.KGB}).  
The set of $ K $-orbits in $ \GKFl{P}{Q} $ is decomposed into 
a finite disjoint union of some quotient spaces   
parametrized by elements of Weyl groups (or ``Bruhat parameters'') and 
``KGB-parameters''.  
For each of these parameters, 
we construct a certain double coset space of unipotent subgroups related to $ P $ and $ Q $, 
which admits an action of a subgroup of Levi component of $Q$.  
The quotient spaces are obtained from this action.

Though general description of the orbit space structure of 
$ K \bsl (\GKFl{P}{Q}) $ 
is much complicated, 
it becomes considerably simpler if $ Q $ is a Borel subgroup of $ K $.  
So let us give a parametrization of orbits in this special case here, and 
for general case we refer to Theorem~\ref{main.thm:classify.K.orbits.on.double.flag.variety}.  

Let $ B $ be a $ \theta $-stable Borel subgroup of $ G $ which contains 
a $ \theta $-stable maximal torus $ T $, 
and $ W $ the Weyl group of $ G $.  
We denote by $ U_B $ the unipotent radical of $ B $ so that $ B = T U_B $.
We write 
$B_K = B \cap K = T_K U_{B_K} $, a Borel subgroup of $K$.

\begin{itheorem}
\label{itheorem:parametrization.of.K.orbits.when.Q.is.Borel}%
Let $P$ be a standard parabolic subgroup of $G$ containing $ B $, 
and $ W_P $ the subgroup of $ W $
 corresponding to the Levi component of $ P $.  
Then the $K$-orbits on the double flag variety are parametrized as follows:
\begin{equation*}
 K \bsl (\GKFl{P}{B_K}) \simeq 
\coprod_{w \in W_P\bsl W} \Bigl( (w^{-1} P w \cap U_B) \bsl U_B / U_{B_K} \Bigr) \Bigm/T_K ,
\end{equation*}
where the maximal torus $ T_K $ of $ K $ acts on the double coset space via conjugation.
\end{itheorem}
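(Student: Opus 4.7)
The plan is to reduce the problem to computing $P \bsl G / B_K$, apply the Bruhat decomposition of $G$, and analyze the residual cosets in each Bruhat cell using the semidirect-product structures of $B$ and $B_K$.

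The first step is the standard identification
\begin{equation*}
K \bsl (\GKFl{P}{B_K}) \simeq B_K \bsl G/P \simeq P \bsl G / B_K,
\end{equation*}
where the first bijection uses the $K$-action to translate the second factor to $eB_K$, and the second is induced by the inversion $gP \mapsto Pg^{-1}$.

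Next, invoke the Bruhat decomposition $G = \coprod_{[w] \in W_P \bsl W} PwB$, which gives $P \bsl G / B \simeq W_P \bsl W$ and hence
\begin{equation*}
P \bsl G / B_K = \coprod_{[w] \in W_P \bsl W} P \bsl PwB / B_K.
\end{equation*}
Each cell is identified, via $Pwb \mapsto (w^{-1}Pw \cap B)\,b$, with $(w^{-1}Pw \cap B) \bsl B$, so that
\begin{equation*}
P \bsl PwB / B_K \simeq (w^{-1}Pw \cap B) \bsl B / B_K.
\end{equation*}
Since $T \subseteq w^{-1}Pw$, the stabilizer factorizes as $w^{-1}Pw \cap B = T \cdot U_w$ with $U_w := w^{-1}Pw \cap U_B$.

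Finally, exploit $B = T \ltimes U_B$ and $T \cap U_B = \{1\}$: the map $tu \mapsto U_w \cdot u$ yields a bijection $(TU_w) \bsl B \xrightarrow{\sim} U_w \bsl U_B$. Transporting the right $B_K$-action, a short calculation shows that $b_K = t_K u_K \in B_K$ (with $t_K \in T_K$, $u_K \in U_{B_K}$) acts on $u \in U_B$ by $u \mapsto (t_K^{-1} u t_K) \cdot u_K$. Hence $U_{B_K}$ acts by right multiplication and $T_K$ by conjugation; both actions descend to $U_w \bsl U_B / U_{B_K}$ because $T_K$ normalizes each of $U_{B_K}$, $U_B$, and $U_w$. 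Taking orbits gives
\begin{equation*}
(w^{-1}Pw \cap B) \bsl B / B_K \simeq \bigl(U_w \bsl U_B / U_{B_K}\bigr) / T_K,
\end{equation*}
and assembling over $[w] \in W_P \bsl W$ produces the claimed parametrization.

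The only step requiring care is the transport of the $B_K$-action in the last paragraph: one must verify that, under the identification $(TU_w) \bsl B \simeq U_w \bsl U_B$, the torus $T_K$ reappears as a conjugation action on $U_B$ rather than acting trivially. This is handled by unwinding the semidirect-product structures, and I do not anticipate any deeper obstacle.
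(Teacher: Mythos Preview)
Your proof is correct and follows essentially the same route as the paper. The paper first proves a general parametrization (Theorem~\ref{main.thm:classify.K.orbits.on.double.flag.variety}) valid for arbitrary parabolic $Q\subset K$, using a Bruhat reduction followed by a KGB reduction and a semidirect-product lemma (Lemma~\ref{lemma:gen.double.coset.lemma:B}), and then specializes to $Q=B_K$; in that specialization the KGB step collapses (since $L'=T$, $\kgbw{w}=\{e\}$) and what remains is exactly your computation that $(P^w\cap B)\backslash B/B_K \simeq \bigl((P^w\cap U_B)\backslash U_B/U_{B_K}\bigr)/T_K$ via the factorization $B=T\ltimes U_B$.
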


Let us discuss finiteness of $ K $-orbits on the double flag varieties.  
If $ P $ is a Borel subgroup of $G$ or $ Q $ is a Borel subgroup of $ K $, 
 then we will see that the finiteness of the double flag variety
 is reduced to the sphericity of a certain linear action.
The classification of spherical linear actions were established by
 Kac~\cite{Kac.1980} for irreducible case and independently by Benson-Ratcliff~\cite{Benson.Ratcliff.1996} and Leahy~\cite{Leahy.1998} for reducible case. 
We therefore obtain a classification of double flag varieties of finite type
 in such cases.

If $ Q = B_K $, we can apply Panyushev's theorem to obtain 
that the conormal bundle 
$ T^{\ast}_{\calorbit} \GFl_P $ (or the normal bundle 
$ T_{\calorbit} \GFl_P $) is $ K $-spherical for any $ K $-orbit $ \calorbit $
 in $\GFl_P$ if and only if 
the flag variety $ \GFl_P $ is $ K $-spherical (see \cite{Panyushev.1999.MMath}).  
As a consequence,
 taking $\calorbit$ as the closed orbit through the base point $e\cdot P$,
 we see that $ \GKFl{P}{Q} $ is of finite type 
 if and only if the action of Levi component of $P\cap K$ on 
the fiber of $ T_{\calorbit} \GFl_P $ at $e\cdot P$ is spherical.
We refer to Theorem~\ref{thm:finiteness.iff.MFaction}
 for the details.  
Using the tables of multiplicity free actions by 
Benson-Ratcliff \cite{Benson.Ratcliff.1996}, 
we obtain a complete classification of 
the double flag varieties 
$ \GKFl{P}{B_K} $ of finite type in Theorem~\ref{theorem:k.spherical}:  

\begin{itheorem}
\label{itheorem:k.spherical}
Let $G$ be a connected simple algebraic group  
 and $(G,K)$ a symmetric pair.
Let $P$ be a parabolic subgroup of $G$.  
Then the double flag variety $ G/P \times K/B_K $
 is of finite type 
 if and only if 
it appears in Table~\ref{table:k.spherical}.  
The table also serves as a complete list of $ K $-spherical partial flag varieties $ G/P $.
\end{itheorem}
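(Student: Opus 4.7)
The plan is to invoke the reduction to multiplicity-free linear actions sketched in the paragraph preceding the statement. Since $B_K$ is a Borel subgroup of $K$, the number of $K$-orbits on $G/P \times K/B_K$ equals the number of $B_K$-orbits on $G/P$, so $G/P \times K/B_K$ is of finite type if and only if $G/P$ is $K$-spherical; this already yields the second assertion of the theorem. By Panyushev's theorem~\cite{Panyushev.1999.MMath}, invoked in Theorem~\ref{thm:finiteness.iff.MFaction}, $K$-sphericity of $G/P$ is in turn equivalent to multiplicity-freeness of the linear action of the Levi component $M$ of $P \cap K$ on the fiber at $e\cdot P$ of the normal bundle to the closed $K$-orbit $\calorbit = K\cdot (e\cdot P) \subset G/P$.

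Next I would make this representation completely explicit. Replacing $P$ by a $K$-conjugate if necessary, we may assume $P$ is $\theta$-stable, so that $L := P \cap \theta(P)$ is a $\theta$-stable Levi factor of $P$ and $M = L \cap K$. The tangent space $T_{e\cdot P}(G/P)$ is naturally identified with the nilradical $\lie{u}^{-}$ of the parabolic opposite to $P$, and the tangent space of $\calorbit$ at $e\cdot P$ is $\llk \cap \lie{u}^{-}$. Via the $\theta$-eigenspace decomposition $\llg = \llk \oplus \llp$, the normal fiber is therefore the $M$-module
\[
V \;=\; \llp \cap \lie{u}^{-}.
\]
The theorem thus reduces to deciding, for each simple symmetric pair $(G,K)$ and each standard parabolic $P$, whether the linear $M$-action on $V$ is multiplicity-free.

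The final step is a case-by-case comparison with the classification of multiplicity-free linear actions of reductive groups: Kac~\cite{Kac.1980} in the irreducible case, and Benson-Ratcliff~\cite{Benson.Ratcliff.1996} (with Leahy~\cite{Leahy.1998}) in the reducible case. I would enumerate the simple symmetric pairs from Berger's list and, for each pair, run through the subsets of simple roots defining $P$; for each triple $(G,K,P)$ one identifies $M$ as a (possibly disconnected, non-semisimple) reductive group and decomposes $V = \llp \cap \lie{u}^{-}$ into its $M$-irreducible constituents. Matching these data against the Benson--Ratcliff tables produces the entries of Table~\ref{table:k.spherical}, and the equivalence of the first paragraph then yields the companion list of $K$-spherical partial flag varieties. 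The main obstacle is the bookkeeping in this enumeration: outside the Hermitian cases $V$ is reducible, $M$ is frequently non-connected, and for non-maximal $P$ the representations arising from several simple roots must be combined correctly before being matched against the classification tables.
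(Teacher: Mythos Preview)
Your approach is essentially that of the paper, but two points deserve correction or mention.

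First, the claim that $P$ may be taken $\theta$-stable after $K$-conjugation is false in general (for instance, for $(\lie{sl}_n,\lie{so}_n)$ the involution swaps the simple roots, so most maximal parabolics are not $K$-conjugate to $\theta$-stable ones), and in any case the formula $L = P\cap\theta(P)$ does not give a Levi factor when $P$ is $\theta$-stable, since then $P\cap\theta(P)=P$. What the paper does instead is take $P = P_J$ standard for a fixed $\theta$-stable Borel $B$ with $\theta$-stable maximal torus $T$; then the standard Levi $L = L_J$ contains $T$, one has $L\cap K = L_{J\cap\theta(J)}\cap K$ (Proposition~\ref{proposition:L_JcapK.action.on.Lie.algebra.of.V}), and the criterion is that the adjoint action of $L\cap K$ on $\lie{u}_{P}\cap\lie{g}^{-\theta}$ be spherical (Theorem~\ref{thm:finiteness.iff.MFaction}\,(5)). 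This is your $V$ up to replacing $\lie{u}^-$ by $\lie{u}_P$, which is harmless.

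Second, the paper saves work in the Hermitian case: when $K$ is the Levi $L_{\Pi\setminus\{\alpha_i\}}$ of a maximal parabolic, Lemma~\ref{lemma:double.triple} shows that $G/P\times K/B_K$ is of finite type if and only if the triple flag variety $G/P\times G/P_{\opposite{\Pi\setminus\{\alpha_i\}}}\times G/B$ is, so these entries are read off directly from Stembridge's classification (Table~\ref{table:group.spherical}) rather than by the linear-action check. For the non-Hermitian pairs the paper proceeds exactly as you outline, encoding $(G,K,B)$ by a Vogan diagram, computing $L\cap K$ and the highest weights of $\lie{u}_P\cap\lie{g}^{-\theta}$ explicitly, and matching against the Benson--Ratcliff tables, while frequently invoking the dimension bound $\dim L \ge \dim G - \dim K - \rank K$ to discard most parabolics before any representation-theoretic computation is needed.
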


Recently, for $G=SL_n$, Petukhov classified reductive subgroups $ H $ of $ G $ 
and parabolic subgroups $ P $, 
for which a partial flag variety $ G/P $ is $ H $-spherical \cite{Petukhov.2011}.  
We thank the referee for pointing out the reference.

On the other hand, if $ P = B $, 
a double flag variety 
$ \GKFl{B}{Q} $ is of finite type 
if and only if 
$ G/Q $ is a $ G $-spherical variety.  
In this case, Theorem~\ref{main.thm:classify.K.orbits.on.double.flag.variety} implies that a double flag variety is of finite type
if and only if 
a certain linear action of a reductive subgroup of $K$ is
a spherical action
(Theorem~\ref{theorem:case.P=B}).  
So we can again use tables in \cite{Benson.Ratcliff.1996} 
to get a classification of such 
double flag varieties of finite type.  
See Theorem~\ref{theorem:classification.P=B} and 
Table~\ref{table:g.spherical} for details.

Another motivation to study double flag varieties $ \GKFl{B}{Q} $ of finite type comes from the theory of character sheaves. Character sheaves were first introduced by Lusztig \cite{Lu}. There are certain $G$-equivariant simple perverse sheaves on $G$, which provide a geometric theory of characters of a connected reductive group over an arbitrary algebraically closed field. 

Recently, some generalizations of character sheaves have been studied. Finkelberg, Ginzburg and Travkin developed the theory of mirabolic character sheaves in \cite{FG} and \cite{FGT.2009}. Following the work of Kato \cite{Kato.2009}, Henderson and Trapa suggested the theory of exotic character sheaves in \cite{Henderson.Trapa.2012}. These character sheaves are certain $K$-equivariant simple perverse sheaves on $V \times G/K$, where $V$ is some $K$-module. Here in the mirabolic case, $(G, K, V)=(GL_n \times GL_n, (GL_n)_{\diag}, \bbC^n)$ and in the exotic case, $(G, K, V)=(GL_{2n}, Sp_{n}, \bbC^{2n})$. A key ingredient is that there are only finitely many $K$-orbits on the generalized flag $V \times G/B$. 

One may hope that there is a generalization of character sheaves on $K/Q \times G/K$, which generalizes both the mirabolic character sheaves and exotic character sheaves. In order to do this, one first need to know when a double flag variety $ \GKFl{B}{Q} $ has only finitely many $K$-orbits. We believe that the classification of double flag varieties $ \GKFl{B}{Q} $ of finite type is a necessary ingredient for establishing the (conjectural) generalization of character sheaf theory.

The Robinson-Schensted correspondence is a bijection correspondence between permutations and pairs of standard Young tableaux of the same shape. Steinberg gave a geometric interpretation of this correspondence, by showing that both sides naturally parametrize the irreducible components of the Steinberg variety, which is by definition the conormal variety of the product of flag varieties. It is interesting to study a similar question for conormal variety of double flag of finite type. The mirabolic case was obtained by Travkin \cite{Travkin.2009}, Finkelberg-Ginzburg-Travkin \cite{FGT.2009} and the exotic case was obtained by Henderson-Trapa \cite{Henderson.Trapa.2012}, in which they also made some conjectures relating the exotic Robinson-Schensted correspondence to exotic character sheaves. 

This paper is organized as follows.
We fix basic notation and terminology in \S~\ref{section:preliminaries}.
In Theorem~\ref{main.thm:classify.K.orbits.on.double.flag.variety},
 we give a parametrization of $K$-orbits in $\GKFl{P}{Q}$.
When $Q \subset K$ is a Borel subgroup, we see in Proposition~\ref{proposition:L_JcapK.action.on.Lie.algebra.of.V} that a part of the $K$-orbit decomposition in $\GKFl{P}{Q}$ is reduced to a linear action.
Classification of the double flag varieties $\GKFl{P}{Q}$ of finite type
 in the extreme case, namely, the case where $Q=B_K$ or $P=B$
 is our main result in this paper.
In \S~\ref{section:spherical.actions},
 we reduce the finiteness of the double flag variety $\GKFl{P}{Q}$ to 
 the sphericity of a linear action
 for $Q=B_K$ (Theorem~\ref{thm:finiteness.iff.MFaction})
 and for $P=B$ (Theorem~\ref{theorem:case.P=B}).
We also recall one of Stembridge's results in Theorem~\ref{theorem:triple.classify}
 which classifies the triple flag varieties
 $G/P_1 \times G/P_2 \times G/B$ of finite type.
Classifications of double flag varieties $\GKFl{P}{Q}$ of finite type are
 given in Theorem~\ref{theorem:k.spherical} with Table~\ref{table:k.spherical}
 for $Q=B_K$ and Theorem~\ref{theorem:classification.P=B} 
 with Table~\ref{table:g.spherical} for $P=B$.  

\subsection*{Acknowledgment} 
The authors thank Roger Howe for private communication about 
his note on $ K $-spherical flag varieties with Nolan Wallach and Jozsef Horvath.

The authors also express deep thanks to two anonymous referees 
for invaluable suggestions/comments and pointing out appropriate references, 
 which greatly improved the paper.

\section{Preliminaries}
\label{section:preliminaries}

\subsection{}
\label{subsection:notation}

Let $ G $ be a connected, simply connected semisimple algebraic group over
 the complex number field $ \bbC $ 
and $ \theta $ a non-trivial involutive automorphism of $ G $.  
We put $ K = G^{\theta} =\{g\in G: \theta(g)=g\} $,
 the subgroup of fixed elements of $ \theta $, which is connected and reductive by our assumption on $ G $ 
(see \cite[Theorem 8.1]{Steinberg.1968}).  
We denote the Lie algebra of $ G $ (resp.\ $ K $) by $ \lie{g} $ (resp.\ $ \lie{k} $).  
In the following, we use similar notation; 
for an algebraic group we use a Roman capital letter, and for its Lie algebra the corresponding German small letter.

Let $ B \subset G $ be a $ \theta $-stable Borel subgroup and 
take a $ \theta $-stable maximal torus $ T $ in $ B $.  
We consider the root system $ \Delta = \Delta(\lie{g}, \lie{t}) $, the Weyl group $ W = W_G = N_G(T)/Z_G(T) $ 
with respect to $ T $, 
and the positive system $ \Delta^+ $ corresponding to $ B $.  
Then $ \Delta^+ $ determines a set of simple roots $ \Pi $.  
Since $ B $ and $ T $ are $ \theta $-stable, 
$ \theta $ naturally acts on $ W_G $ and $ \Delta $, 
and preserves $ \Delta^+ $ and $ \Pi $.  

We say that a parabolic subgroup $P$ of $G$ is standard if
 $P \supset B$.
There exists a one-to-one correspondence between
 the standard parabolic subgroups $P$
 and the subsets $ J \subset \Pi $; 
the root subsystem $ \Delta_J $ generated by $ J $ is the root system of 
the standard Levi component $ L $ of $ P $.  
Notice that the $ \theta $-stable parabolic subgroups correspond exactly to the $ \theta $-stable subsets in $ \Pi $.  
If $ P $ corresponds to $ J $, then we will write $ P = P_J $ 
and write $ P_J = L_J U_J $ for 
 the Levi decomposition, where $ L_J $ is the standard Levi factor and $ U_J $ is the unipotent radical.  
We denote the Weyl group of $ \Delta_J $ by $ W_J $
 and put $W^J:=W/W_J$.
For two subsets $ J, J' \subset \Pi $, 
 put $ \jwjprime := W_J \backslash W / W_{J'} $.
In the following, we often take representatives of elements of $W$
 in $N_G(T)$ and regard $W$ as a subset of $N_G(T)$ or of $G$.
Similarly, we take representatives of elements of $W^J$,
 $ \jwjprime $ in $W$
 and then in $N_G(T)$ so that we have $W^J, \jwjprime \subset N_G(T)$.

\subsection{}
\label{subsection:spherical}

In this subsection, we assume that $G$ is a connected reductive algebraic group
 over $\bbC$.
Let $X$ be an irreducible normal $G$-variety.
If $X$ has an open $B$-orbit for a Borel subgroup $B$ of $G$,
 it is called a spherical variety and
 the $G$-action is called a spherical action.
It is well-known that $X$ is spherical 
 if and only if there are only finitely
 many $B$-orbits in $X$ (\cite{Brion.MM.1986, Vinberg.1986}).
The $G$-action on $X$ induces a $G$-action
 on the ring of regular functions $\bbC [X]$.
It is easy to see that if $X$ is spherical, then
 the decomposition of
 $\bbC [X]$ into irreducible $G$-modules
 is multiplicity-free.
The converse is also true if $X$ is an affine variety
 (\cite{Vinberg.Kimelfeld.1978}).
If $X$ is isomorphic to a vector space and the $G$-action is linear,
 $\bbC [X]$ can be identified with the symmetric power $S(X^{\ast})$ of
 the dual space $X^{\ast}$.
Hence a linear $G$-action is spherical if and only if the decomposition
 of $S(X^{\ast})$ into irreducible $G$-modules is multiplicity-free.

\section{Parametrization of $K$-orbits in the double flag variety}
\label{section:double.flag}

Suppose that 
$ G $ is a connected, simply connected semisimple algebraic group over $ \bbC $
 with an involutive automorphism $\theta$
 and let $ K:=G^\theta $.  
Let $ P $ be a parabolic subgroup of $G$, and 
 $ Q $ a parabolic subgroup of $ K $.
We denote the partial flag varieties
 $G/P$ and $ K/Q $ by $ \GFl_P $ and $ \KFl_Q $, respectively.  
The product $ \GKFl{P}{Q} $ is called a \emph{double flag variety for symmetric pair} $ (G, K) $.  
We say that a double flag variety $ \GKFl{P}{Q} $
 is of \emph{finite type} 
 if there are only finitely many orbits on the product $ \GKFl{P}{Q} $ 
 with respect to the diagonal $ K $-action
  (see \cite{NO.2011}).

In this paper, we study the structure of the orbit space 
$ K \backslash (\GKFl{P}{Q}) $, 
and give a parametrization of orbits.  
As a consequence of the parametrization, we get a criterion to 
determine if the double flag variety is of finite type.

It is known that there exists a $\theta$-stable parabolic subgroup $ P' $
 of $ G $ such that $ Q = P'\cap K$ (\cite[Theorem 2]{Brion.Helminck.2000}).
Then by replacing $P$ with its conjugate subgroup,
 we may assume that $P$ and $P'$ are standard parabolic subgroups
 for a $\theta$-stable Borel subgroup $B$.
We use notations in 
 \S~\ref{subsection:notation} for our $G$, $K$, and $B$.
Write $J, J' \subset \Pi$ for the subsets such that 
 $ P = P_J $ and $ P' = P_{J'}$.
Let $P=LU$ and $P'=L'U'$ be the standard Levi decompositions.
This means that $L$ is the Levi subgroup of $P$ such that $L\supset T$
 and $U$ is the unipotent radical of $P$.
Similarly for $L'$ and $U'$.

We parametrize $ K $-orbits on $ \GKFl{P}{Q} $ using
 reduction by two well-known decompositions: the Bruhat decomposition
 and the KGB decomposition.  

First we reduce the orbit space by the Bruhat decomposition.

\subsection{Reduction by Bruhat decomposition}
\label{subsection:reduction.to.Bruhat}

Notice that there is a bijection
\begin{equation*}
K \backslash (\GKFl{P}{Q}) \simeq P \backslash G /Q , \qquad
K \cdot (g P, k Q) \mapsto P g^{-1} k Q \quad (g \in G, k \in K).
\end{equation*}
Since $ Q = P' \cap K $, we have the following reduction map $\Phi$.
\begin{equation*}
\xymatrix @C-4ex {
K \backslash (\GKFl{P}{Q}) 
   \ar[rrdd]_{\Phi} \ar[rr]^{\sim}
   & \quad
   & 
   P \backslash G /Q
   \ar[d]^{\text{proj}}
\\
   &
   & 
   P \backslash G / P' 
   \ar[d]^{\simeq}
   &
   \makebox[0pt][l]{$= \coprod_{w \in \jwjprime} P w P'$} 
\\
   &
   & 
   \jwjprime & 
   \makebox[0pt][l]{$ = W_J \backslash W / W_{J'} $}
}
\hspace*{.3\textwidth}
\end{equation*}
Thus we can reduce the determination of the
 orbit structure $ K \backslash (\GKFl{P}{Q}) $ to 
 the analysis of the fiber 
\begin{equation}
\label{eq:fiber.of.Bruhat.reduction}
\Phi^{-1}(w) \simeq P \backslash P w P'/ Q \quad (w\in \jwjprime).
\end{equation}

Let us fix $ w \in \jwjprime $ in the following.  
We put 
\begin{equation*}
\wP{w} := w^{-1} P w .
\end{equation*}
More generally, we write
 $ H^g = g^{-1} H g $ for any subgroup $ H \subset G$ and $ g \in G $.

\begin{lemma}
\label{lemma:bij.cosets}
The map $ (\wPprime{w}) \backslash P' / Q \to P \backslash P w P'/ Q $ given by 
$ (\wPprime{w}) \, a \, Q \mapsto P \, w a \, Q $ for $ a \in P' $
is bijective.
\end{lemma}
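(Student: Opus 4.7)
The plan is to verify the three routine properties in turn: well-definedness, surjectivity, and injectivity, using only the definition $\wP{w}=w^{-1}Pw$ together with the inclusion $Q\subset P'$.

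First I would check that the map is well-defined. Given $a,a'\in P'$ with $a'=p_0\,a\,q_0$ for some $p_0\in \wPprime{w}=w^{-1}Pw\cap P'$ and $q_0\in Q$, the key point is that $w p_0 w^{-1}\in P$ because $p_0\in w^{-1}Pw$. Writing $wa'=(wp_0w^{-1})\,wa\,q_0$ then shows $P\,wa'\,Q=P\,wa\,Q$, so the assignment descends to the quotient.

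Surjectivity is immediate: every element of $P\backslash PwP'/Q$ has a representative of the form $wb$ with $b\in P'$ (since $PwP'=P\cdot w\cdot P'$), and $(\wPprime{w})\,b\,Q$ maps to it.

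For injectivity, suppose $(\wPprime{w})\,a\,Q$ and $(\wPprime{w})\,b\,Q$ both land in the same double coset $P\,wa\,Q=P\,wb\,Q$, so that $wa=p\,wb\,q$ for some $p\in P$, $q\in Q$. Rearranging gives $a=(w^{-1}pw)\,b\,q$. Here $w^{-1}pw\in \wP{w}$ by definition, and on the other hand $w^{-1}pw=a(bq)^{-1}\in P'$ since $a,b\in P'$ and $q\in Q\subset P'$. Therefore $w^{-1}pw\in \wP{w}\cap P'=\wPprime{w}$, which shows $(\wPprime{w})\,a\,Q=(\wPprime{w})\,b\,Q$.

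No step is really an obstacle; the only thing one must be careful about is invoking the containment $Q\subset P'$ (which holds since $Q=P'\cap K$) at precisely the spot where one needs $bq\in P'$ in the injectivity argument. After this lemma is established, combined with \eqref{eq:fiber.of.Bruhat.reduction} the fiber of $\Phi$ over $w\in \jwjprime$ is identified with $(\wPprime{w})\backslash P'/Q$, which is the starting point for the subsequent reduction via the KGB decomposition.
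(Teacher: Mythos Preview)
Your proof is correct and follows essentially the same elementary group-theoretic verification as the paper: the paper isolates the single-coset bijection $(P^w\cap P')\backslash P'\to P\backslash PwP'$ as a separate general lemma (Lemma~\ref{lemma:general.quotient.of.dbl.cosets}) and then quotients by the right $Q$-action, whereas you check well-definedness, surjectivity, and injectivity directly at the level of double cosets. The core computation --- that $w^{-1}pw$ lies in $P^w\cap P'$ because it equals an element of $P'$ --- is identical in both arguments.
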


This is a consequence of a more general lemma below.  
In the lemma, $ G, H, H' $ refer arbitrary groups, which are different from the present notation.

\begin{lemma}
\label{lemma:general.quotient.of.dbl.cosets}
Let $ G $ be a group and $ H, H' $ its subgroups.
Let $A \subset H'$ be a subset and $g \in G$ an element.
Then the map
 $(H^g \cap H')\backslash (H^g \cap H')A \to
 H\backslash HgA$
 given by $(H^g \cap H')a \mapsto Hga$ for $a\in A$
 is bijective.
\end{lemma}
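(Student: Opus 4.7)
The plan is to prove this directly by verifying that the map $\phi: (H^g \cap H') a \mapsto H g a$ is well-defined, injective, and surjective. Since the quotient set on the left is literally the set of $(H^g \cap H')$-orbits on $(H^g \cap H')A$, it suffices to work with representatives $a \in A$. Surjectivity is immediate, because every element of $HgA$ is of the form $hga$ with $h \in H$, $a \in A$, and hence lies in the coset $Hga = \phi((H^g \cap H')a)$.

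The heart of the argument is a single computation that handles both well-definedness and injectivity simultaneously. I would show the equivalence
\begin{equation*}
(H^g \cap H')\, a \;=\; (H^g \cap H')\, a' \quad \Longleftrightarrow \quad H\,g a \;=\; H\,g a' \qquad (a, a' \in A).
\end{equation*}
For the forward direction, if $a' = h a$ with $h \in H^g \cap H'$, write $h = g^{-1} h_0 g$ with $h_0 \in H$; then $g a' = h_0 \, g a$, so $Hga' = Hga$. For the reverse direction, assume $g a' = h_0 \, g a$ for some $h_0 \in H$. Then $a' \, a^{-1} = g^{-1} h_0 g \in H^g$. The crucial observation is that $a, a' \in A \subset H'$, so $a' a^{-1}$ also lies in $H'$. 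Hence $a' a^{-1} \in H^g \cap H'$, which gives $(H^g \cap H') a = (H^g \cap H') a'$.

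The only step that requires any thought at all is the return direction: one must not forget to use $A \subset H'$ to conclude membership in $H^g \cap H'$ rather than only in $H^g$. Without this hypothesis, the map would fail to be injective. After this observation, everything else is formal. Since the lemma is group-theoretic and makes no use of the algebraic-group structure, no topological or geometric input is needed, and the whole proof fits in a few lines.

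Finally, I would deduce Lemma~\ref{lemma:bij.cosets} as a special case by taking the ambient group to be $G$, setting $H = P$, $H' = P'$, and $A = P'$ (so that $HgA = P w P'$ and $(H^g \cap H') A = \wPprime{w}\,P' = P'$), and then passing to further $Q$-cosets on the right — an operation that commutes with the bijection because $Q \subset P'$.
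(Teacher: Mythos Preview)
Your proof is correct and follows essentially the same approach as the paper's own proof, which is even terser: it declares surjectivity clear and for injectivity simply observes that $Hga_1 = Hga_2$ forces $a_2 a_1^{-1} \in H^g \cap H'$ (using $a_1, a_2 \in A \subset H'$), hence $(H^g \cap H')a_1 = (H^g \cap H')a_2$. Your version is slightly more explicit in that you also spell out well-definedness and frame both directions as a single equivalence, and you correctly identify the only nontrivial point --- the use of $A \subset H'$ in the injectivity step; the application to Lemma~\ref{lemma:bij.cosets} is exactly as in the paper.
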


\begin{proof}
The surjectivity is clear.
If $ Hga_1= Hga_2 $ for $a_1,a_2\in A$,
 then $a_2a_1^{-1} \in H^g \cap H'$.
Hence $(H^g \cap H')a_1=(H^g \cap H')a_2$ and
 the map is injective.
\end{proof}

We apply Lemma~\ref{lemma:general.quotient.of.dbl.cosets}
in the setting where $G=G$, $H=P$, $H'=P'$, $A=P'$ and $g=w$.
Taking quotients by right $Q$-action, we get Lemma~\ref{lemma:bij.cosets}.

\subsection{Reduction by smaller symmetric spaces}
\label{subsection:reduction.to.KGB}

The orbit structure $ K \bsl (\GKFl{P}{Q}) $ is reduced to the structure of 
fibers \eqref{eq:fiber.of.Bruhat.reduction} of $ \Phi $.  
In this subsection, we further reduce it by the KGB decomposition, or by KGP decomposition we should say, 
 for smaller symmetric spaces.  For KGB decomposition, we refer the readers to 
\cite{Richardson.Springer.1990, Richardson.Springer.1994, Richardson.Springer.1993}
and 
\cite{Lusztig.Vogan.1983}.

Put $ L'_K := L' \cap K $ and consider 
$ \wPcap{L'}{w} $, which is a parabolic subgroup of $ L' $ by 
\cite[Proposition 2.8.9]{Carter.1985}.  
Then $ L'_K $ is a symmetric subgroup of $ L' $, 
and it is known that $ (\wPcap{L'}{w}) \bsl L' / L'_K $ is a finite set.  
Let us denote this finite set by $ \kgbw{w} $ for $w \in \jwjprime$.

\begin{lemma}
\label{lemma:map.w.to.v}
The map 
\begin{equation*}
\Psi_w:(\wPprime{w}) \bsl P' / Q \to (\wPcap{L'}{w}) \bsl L' / L'_K
 =\kgbw{w} 
\end{equation*}
given by $ (\wPprime{w}) \, a \, Q \mapsto (\wPcap{L'}{w}) \, b \, L'_K $ is well-defined, 
where $ a = b u \in L' U'=P' $ is the Levi decomposition.
\end{lemma}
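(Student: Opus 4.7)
The plan is to verify that the assignment $(\wPprime{w})\, a\, Q \mapsto (\wPcap{L'}{w})\, b\, L'_K$ does not depend on the chosen representative $a \in P'$. Equivalently, I must show that for any $p \in \wPprime{w}$ and $q \in Q$, if $a = bu$ is the Levi decomposition in $P'=L'U'$ and $a' := p a q = b'u'$ is the Levi decomposition of the modified representative, then $b' \in (\wPcap{L'}{w})\, b\, L'_K$.

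The first preparatory step is a Levi decomposition for $Q$. Since $P'$ is $\theta$-stable, both $L'$ and $U'$ are $\theta$-stable (being characteristic in $P'$), so $Q = P' \cap K = L'_K \ltimes (U' \cap K)$. Hence every $q \in Q$ writes uniquely as $q = q_L q_U$ with $q_L \in L'_K$ and $q_U \in U' \cap K \subset U'$. The second preparatory step is the analogous decomposition for $\wPprime{w} = P' \cap w^{-1}Pw$. Using the standard structural result on intersections of parabolic subgroups sharing a maximal torus (cf.\ Carter \cite{Carter.1985}, Prop.~2.8.9 and its corollary), one has
\begin{equation*}
\wPprime{w} \;=\; (\wPcap{L'}{w}) \cdot (U' \cap w^{-1}Pw),
\end{equation*}
and this is precisely the restriction of the Levi decomposition $P' = L'U'$. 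Consequently every $p \in \wPprime{w}$ writes uniquely as $p = p_L p_U$ with $p_L \in \wPcap{L'}{w}$ and $p_U \in U' \cap w^{-1}Pw \subset U'$.

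The third step is a direct computation. Writing $p = p_L p_U$, $a = bu$, $q = q_L q_U$ as above and using that $L'$ normalizes $U'$, one rearranges
\begin{equation*}
pa q \;=\; p_L\, p_U\, b\, u\, q_L\, q_U \;=\; (p_L\, b\, q_L) \cdot v,
\end{equation*}
where $v := \bigl(q_L^{-1}\, b^{-1} p_U\, b\, u\, q_L\bigr)\, q_U$ is a product of elements of $U'$ and therefore lies in $U'$. By the uniqueness of the Levi decomposition in $P' = L'U'$, the Levi component of $pa q$ is $b' = p_L\, b\, q_L$. Since $p_L \in \wPcap{L'}{w}$ and $q_L \in L'_K$, we conclude $(\wPcap{L'}{w})\, b'\, L'_K = (\wPcap{L'}{w})\, b\, L'_K$, which is the required well-definedness.

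The main obstacle is the second preparatory step: the compatibility of the Levi decomposition of $P'$ with the intersection $P' \cap w^{-1}Pw$, i.e.\ that the Levi part of an element of $\wPprime{w}$ (computed in $P' = L'U'$) automatically lands in $L' \cap w^{-1}Pw$. Once this structural fact is quoted, the rest of the argument is a routine rearrangement using the normalization of $U'$ by $L'$.
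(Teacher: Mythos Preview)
Your proof is correct and follows essentially the same approach as the paper: both rely on the structural fact (from Carter) that $\wPprime{w} = (\wPcap{L'}{w}) \ltimes (U' \cap w^{-1}Pw)$, together with the Levi decomposition $Q = L'_K \ltimes U'_K$. The only cosmetic difference is that the paper packages your direct rearrangement into a general lemma about double cosets in semidirect products (their Lemma~\ref{lemma:gen.double.coset.lemma:A}), which it then reuses; your hands-on computation unwinds that lemma in this specific instance.
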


\begin{proof}
We put $ U'_K := U' \cap K $ so that $Q=L'_K U'_K$
 and consider the following diagram.
\begin{equation*}
\xymatrix @C-4ex {
(\wPprime{w}) \bsl P' / Q 
   \ar[rrdd]_{\textstyle\Psi_w} 
   \ar[rr]^>(0)>>>>>{=}
   & \quad\quad
   & 
   (\wPprime{w}) \bsl L' U' / L'_K U'_K 
   \ar[d]^{\text{proj.}}
\\
   &
   & 
   (\wPprime{w}) \bsl L' U' / L'_K U' 
   \ar[d]_-{\textstyle{\wr}}^{\iota^{-1}}
\\
   &
   & 
   (\wPcap{L'}{w})  \bsl L' / L'_K 
   \makebox[0pt][l]{\;$ = \kgbw{w} $}
}
\end{equation*}
Here a bijective map 
\begin{equation}
\label{eqn:iota.bijection}
\iota : 
   (\wPcap{L'}{w}) \bsl L' / L'_K 
   \to 
   (\wPprime{w}) \bsl L' U' / L'_K U' 
\end{equation}
is induced by the inclusion $ L' \hookrightarrow L' U' $ and the second vertical arrow in the diagram is 
 the inverse of $ \iota$.  
The bijectivity of $ \iota $ is deduced from the following general lemma.

\begin{lemma}
\label{lemma:gen.double.coset.lemma:A}
Let $L_1 \ltimes U_1$ be a semidirect product group
 of two groups $L_1$ and $U_1$.
Let $L_2, L_3 \subset L_1$ and $U_2 \subset U_1$ be subgroups and assume that 
$L_2$ normalizes $U_2$ so that $ L_2 \ltimes U_2 $ is a subgroup of $ L_1 \ltimes U_1 $.  
Then the natural inclusion map induces the following bijections:
\[
\xymatrix @1 @M=5pt {
L_2 \backslash L_1 / L_3 
\ar[r]^-{\sim}
&
L_2 \backslash (L_1U_1)/(L_3 U_1)
\ar[r]^-{\sim}
&
(L_2U_2) \backslash (L_1U_1)/(L_3 U_1).
}
\]
\end{lemma}

\begin{proof}
It is easy to see that the both maps are well-defined and surjective.  
So it is enough to see that the composite map is injective.

Suppose $l,l' \in L_1$ satisfy
$L_2 U_2  \, l \, L_3 U_1 = L_2 U_2 \, l' \, L_3 U_1$.
This means that there exist
$l_2 \in L_2, \; l_3 \in L_3, \; u_2 \in U_2, \; u_1 \in U_1$ such that
$l' = (l_2 u_2) \, l \, (l_3 u_1) $,
 or equivalently  
 $l' = (l_2 \, l \, l_3) \bigl( ( (l \, l_3)^{-1}\, u_2 \, (l \, l_3) ) u_1 \bigr) \in L_1 \ltimes U_1$.
By the uniqueness of the semidirect product decomposition,
we have $l'=l_2 \, l \, l_3$ and hence 
 $l' \in L_2 \, l \, L_3$.
\end{proof}

To see that the map $ \iota $ in \eqref{eqn:iota.bijection} is bijective, 
we use Lemma~\ref{lemma:gen.double.coset.lemma:A}
 in the setting where 
$L_1=L'$, $U_1=U'$,
$L_2 = \wP{w} \cap L'$,
$U_2 = \wP{w} \cap U'$, and
$L_3 = L'_K$.
Note that 
\[
\wP{w} \cap P' =
(\wP{w} \cap L') \ltimes
(\wP{w} \cap U')
\]
holds 
(see \cite[Theorem 2.8.7 and Proposition 2.8.9]{Carter.1985}).  
\end{proof}

Let us summarize the above situation into a diagram:
\begin{equation*}
\xymatrix @C=3ex @M=5pt {
P \, w a \, Q \ar@{}[r]|(0.45){\in} \ar@{|->}[d] & 
    P \bsl P w P' / Q \;\; \ar[r]^(0.4){\;\;\sim\;\;} \ar@{->>}[d]_(0.4){{\Psi}_w} & 
    \;\; (\wPprime{w}) \bsl P' / Q \ar@{}[r]|(.55){\ni} \ar@{->>}[d]^(0.45){\text{projection}} & 
    (\wPprime{w}) \, a \, Q \ar@{|->}[d]
\\
(\wPcap{L'}{w}) \, b \, L'_K \ar@{}[r]|(0.45){\in} & 
    (\wPcap{L'}{w}) \bsl L' / L'_K \;\; \ar[r]^(0.46){\;\;\sim\;\;} & 
    \;\; (\wPprime{w}) \bsl P' / Q U' \ar@{}[r]|(.55){\ni} & 
    (\wPprime{w}) \, b \, Q U' 
}
\end{equation*}
where $ a = b u \in L' U' $ is the Levi decomposition.
Let us take representatives of
 $\kgbw{w} = (\wPcap{L'}{w}) \bsl L' / L'_K $ from $ L' $  
 and identify them with $ \kgbw{w} $ in the following.

\subsection{Parametrization of orbits in the double flag variety}

Now we get a rough parametrization of orbits, first by the Bruhat decomposition 
$ P \bsl G / P' \simeq \jwjprime $, then next by 
the KGB decomposition for the smaller symmetric space $ L'/L'_K $.

The following lemma describes the fiber $ \Psi_w^{-1}(v) = P \bsl P \, w v \, U' Q /Q $ for $ v \in \kgbw{w} $.

\begin{lemma}
\label{lemma:conjugate.action.on.double.cosets}
Let us fix $ w \in \jwjprime $ and $ v \in \kgbw{w} $.
\begin{thmenumerate}
\item
$ \Psi_w^{-1}(v) = P \bsl P \, w v \, U' Q /Q \simeq (\wPprime{w v}) \bsl(\wPprime{w v}) \, U' Q /Q $.
\item 
We can define the following surjective map:
\begin{equation*}
\xymatrix @R=.7ex {
(\wPcap{U'}{w v}) \bsl U' / U_K' \;\; \ar[r] & \;\; (\wPprime{w v}) \bsl(\wPprime{w v}) \, U' Q /Q \\
(\wPcap{U'}{w v}) \, u \, U_K' \;\; \ar@{|->}[r] & \;\; (\wPprime{w v}) \, u \, Q  .
}
\end{equation*}
\item
\label{lemma:conjugate.action.on.double.cosets:tem:3}
The above surjection factors through to a bijection 
\begin{equation*}
\xymatrix @1 {
\Bigl( (\wPcap{U'}{w v}) \bsl U' / U_K' \Bigr) \Bigm/\bigl(\wPcap{L'_K}{w v}\bigr) \;\; \ar[r]^<<<<{\sim} & 
    \;\; (\wPprime{w v}) \bsl(\wPprime{w v}) \, U' Q /Q .
}
\end{equation*}
Notice that $ \wPcap{L'_K}{w v} $ normalizes 
$ \wPcap{U'}{w v} $ and $ U'_K $, hence 
the conjugation action of $\wPcap{L'_K}{w v}$ on $U'$
 induces an action on $ (\wPcap{U'}{w v}) \bsl U' / U_K' $. The corresponding quotient space is the one we considered above. 
\end{thmenumerate}
\end{lemma}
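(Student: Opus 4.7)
The plan is to identify the fiber $\Psi_w^{-1}(v)$ explicitly, then reduce it step by step using the natural semidirect-product structures to an action on a double coset of unipotent groups. The three main ingredients are Lemma~\ref{lemma:general.quotient.of.dbl.cosets}, the Levi decomposition $\wPcap{P'}{wv} = (\wPcap{L'}{wv}) \ltimes (\wPcap{U'}{wv})$ (which follows from the cited Carter result as in the proof of Lemma~\ref{lemma:map.w.to.v}), and the Levi decomposition $Q = L'_K U'_K$.

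For part (1), I would unwind the definition of $\Psi_w$ via the diagram at the end of \S\ref{subsection:reduction.to.KGB}. The preimage of $v$ consists of double cosets $(\wPprime{w})\,a\,Q$ with Levi component $b$ lying in $(\wPcap{L'}{w})\,v\,L'_K$, which under the bijection of Lemma~\ref{lemma:bij.cosets} corresponds to $P \bsl P w v\, U' Q / Q$ (using $Q U' = U' Q$ because $L'_K$ normalizes $U'$). Applying Lemma~\ref{lemma:general.quotient.of.dbl.cosets} with $H = P$, $H' = P'$, $A = U'Q \subset P'$, and $g = wv$, and then taking the quotient by the right $Q$-action, yields the stated bijection with $(\wPprime{wv}) \bsl (\wPprime{wv})\,U' Q / Q$. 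For part (2), I would check well-definedness and surjectivity directly: if $u_2 = x u_1 y$ with $x \in \wPcap{U'}{wv}$ and $y \in U'_K$, then $x \in \wPprime{wv}$ and $y \in Q$, so the map is constant on double cosets; surjectivity is immediate from $(\wPprime{wv})\,uq\,Q = (\wPprime{wv})\,u\,Q$.

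The main content is part (3). Suppose $(\wPprime{wv})\,u_1\,Q = (\wPprime{wv})\,u_2\,Q$, so $u_1 = p u_2 q$ for some $p \in \wPcap{P'}{wv}$ and $q \in Q$. Writing $p = p_L p_U$ and $q = q_L q_U$ via the respective Levi decompositions, I move $p_L$ to the right through $p_U$ and $u_2$ by conjugation; the conjugated factors remain in $U'$ since $\wPcap{L'}{wv}$ normalizes $\wPcap{U'}{wv}$ and $L'$ normalizes $U'$. The outcome is
\[
u_1 = (p_L p_U p_L^{-1})\,(p_L u_2 p_L^{-1})\,(p_L q_L)\,q_U.
\]
Moving $q_U$ past $p_L q_L$ (again staying in $U'$ since $L'$ normalizes $U'$) rewrites the right-hand side as an element of $U'$ times the $L'$-element $p_L q_L$. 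Since $u_1 \in U'$ and $P' = L' \ltimes U'$, uniqueness of the semidirect product decomposition forces $p_L q_L = e$. Hence $p_L = q_L^{-1} \in L'_K \cap \wPcap{L'}{wv} = \wPcap{L'_K}{wv}$, and setting $m := p_L$ gives
\[
u_1 = (m p_U m^{-1})\,(m u_2 m^{-1})\,q_U ,
\]
which places $u_1$ and $m u_2 m^{-1}$ in the same $(\wPcap{U'}{wv})$-$U'_K$-double coset. This shows the map in (2) factors through the conjugation action of $\wPcap{L'_K}{wv}$ and that the induced map is injective. Well-definedness of that conjugation action is routine: $\wPcap{L'_K}{wv} \subset \wPcap{L'}{wv}$ normalizes $\wPcap{U'}{wv}$ by the Levi decomposition above, and $\wPcap{L'_K}{wv} \subset L'_K$ normalizes $U'_K$ by $Q = L'_K U'_K$.

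The main obstacle is the bookkeeping in part (3) — ensuring at each commutation step that the relevant factor stays inside the intended unipotent or Levi piece, and recognising that the identity $p_L q_L = e$ forced by the semidirect product is precisely what couples the two Levi pieces into a single element of $\wPcap{L'_K}{wv}$ acting by conjugation.
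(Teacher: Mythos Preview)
Your proof is correct and follows essentially the same approach as the paper. The paper packages the computation in part~(3) into a standalone lemma about semidirect products (Lemma~\ref{lemma:gen.double.coset.lemma:B}: for $L_1\ltimes U_1$ with $L_i U_i$ ($i=2,3$) as stated, one has $(U_2\backslash U_1/U_3)/(L_2\cap L_3)\simeq (L_2U_2)\backslash(L_2 U_1 L_3)/(L_3U_3)$), proved by exactly the ``move the Levi parts together, force $l_2 l_3 = e$ from uniqueness of the semidirect product'' manoeuvre you carry out inline; it then specialises to $L_1=L'$, $L_2=\wPcap{L'}{wv}$, $L_3=L'_K$, $U_1=U'$, $U_2=\wPcap{U'}{wv}$, $U_3=U'_K$.
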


\begin{proof}
(1)\ 
This follows from Lemma~\ref{lemma:general.quotient.of.dbl.cosets}.

(2)\ 
Since 
$ \wPcap{U'}{w v} \subset \wPcap{P'}{w v} $ and 
$ U'_K \subset Q $, 
our map is just a projection.

(3)\ 
We use the following general lemma, in which the notations are independent of the rest of the arguments.

\begin{lemma}
\label{lemma:gen.double.coset.lemma:B}
Let $L_1 \ltimes U_1$ be a semidirect product group
 of two groups $L_1$ and $U_1$.
Let $L_2, L_3 \subset L_1$ and $U_2,U_3\subset U_1$ be
subgroups and assume that $L_i$ normalizes $U_i$ for $i=2,3$ so that
$ L_2 \ltimes U_2 $ and 
$ L_3 \ltimes U_3 $ are subgroups of $ L_1 \ltimes U_1 $.  
\begin{thmenumerate}
\item
The conjugation action of the group $L_2 \cap L_3$ on $U_1$
by $u \mapsto l \, u \, l^{-1}$
induces a well-defined action of $L_2 \cap L_3$ on
$U_2 \backslash U_1 /U_3$.
\item
The natural map
 $U_2 \backslash U_1 / U_3
 \to (L_2 U_2)\backslash (L_2 U_1 L_3) / (L_3 U_3)$,
 $U_2 u U_3 \mapsto (L_2U_2)u(L_3U_3)$ 
induces a bijective map
\[
\xymatrix @1 @M=5pt {
\varphi:
(U_2 \backslash U_1 /U_3) / (L_2 \cap L_3) \ar[r]^-{\sim} & (L_2 U_2)\backslash (L_2 U_1 L_3)/(L_3 U_3).
}
\]
\end{thmenumerate}
\end{lemma}

\begin{proof}
The claim (1) is obvious.  

Let us prove (2).
The surjectivity of $\varphi$
 is clear since we can always take a representative of the right-hand side in $U_1$.  
We give a proof of injectivity. 
For $u, u' \in U_1$, let us assume that $(L_2 U_2) \, u \, (L_3 U_3) = (L_2 U_2) \, u' \, (L_3 U_3)$.
Then there exist $l_2 \, u_2 \in L_2 U_2$ and $l_3 \, u_3 \in L_3 U_3$ such that
$u' = (l_2 \, u_2) \, u \, (l_3 \, u_3)$.
We rewrite it as
\begin{equation*}
u'= (l_2 \, l_3) \bigl( (l_3^{-1} \, u_2 \, u \, l_3) \, u_3 \bigr) \in L_1 U_1 .
\end{equation*}
By the uniqueness of the semidirect product,
$l_2 \, l_3 = e $ and $u'=(l_3^{-1} \, u_2 \, u \, l_3) \, u_3$.
Therefore, we have $l_2=l_3^{-1} \in L_2 \cap L_3$ and
\begin{equation*}
u' = (l_2 \, u_2 \, l_2^{-1}) \, (l_2 \, u \, l_2^{-1}) \, u_3 \in U_2 \, (l_2 \, u \, l_2^{-1}) \, U_3.
\end{equation*}
This shows $u' \in (L_2 \cap L_3) \cdot (U_2 \, u \, U_3)$, where $ \cdot $ denotes the conjugation action.
\end{proof}

To prove Lemma~\ref{lemma:conjugate.action.on.double.cosets} \eqref{lemma:conjugate.action.on.double.cosets:tem:3},
 we apply Lemma~\ref{lemma:gen.double.coset.lemma:B}~(2) in the
 setting where 
\begin{equation*}
L_1 = L', \;\; 
L_2=\wP{w v} \cap L', \;\;
L_3 = L'_K, \;\;
U_1= U', \;\;
U_2=\wP{w v} \cap U', \;\;
U_3= U'_K.
\end{equation*}
We need to use again 
$
\wP{w v} \cap P' =
(\wP{w v} \cap L') \ltimes
(\wP{w v} \cap U') 
$
(\cite[Theorem 2.8.7 and Proposition 2.8.9]{Carter.1985}), 
and $Q=L'_K U'_K$.
\end{proof}

Lemma~\ref{lemma:conjugate.action.on.double.cosets} with two reductions 
(\S~\ref{subsection:reduction.to.Bruhat} and \S~\ref{subsection:reduction.to.KGB}) 
gives us a parametrization of 
$ K $-orbits in the double flag variety $ \GKFl{P}{Q} $.

\begin{theorem}
\label{main.thm:classify.K.orbits.on.double.flag.variety}
Let $ P = P_J $ and $ P' = P_{J'} $ be standard \psgs of $ G $ and 
assume that $ P' $ is $ \theta $-stable with the standard ($ \theta $-stable) Levi decomposition $ P' = L' U' $.  
Define $ Q := P' \cap K $, which is a parabolic subgroup of $ K $,
 and put $ L'_K := L'\cap K$, $ U'_K := U' \cap K $.  
\begin{thmenumerate}
\item
\label{main.thm:item:classification.via.unipotent.double.cosets}
The $ K $-orbits in the double flag variety
 $ \GKFl{P}{Q} = G/P \times K/Q $ are parametrized as follows:
\begin{equation*}
K \bsl (\GKFl{P}{Q}) \simeq \coprod_{w \in \jwjprime} 
\coprod_{v \in \kgbw{w}} 
\Bigl( (\wPcap{U'}{w v}) \bsl U' / U_K' \Bigr) \Bigm/ \wPcap{L'_K}{w v},
\end{equation*}
Here we write $ \jwjprime := W_J \bsl W / W_{J'} $, 
 $ \kgbw{w} := (P^w \cap L') \bsl L'/L'_K $
 and identify them with their representatives.
\item
The double flag variety $ \GKFl{P}{Q} $ is of finite type if and only if for any $ w \in \jwjprime $ and $ v \in \kgbw{w} $, the conjugation action of $\wPcap{L'_K}{w v}$ on the double coset space 
$(\wPcap{U'}{w v}) \bsl U' / U_K' $ has only finitely many orbits.
\end{thmenumerate}
\end{theorem}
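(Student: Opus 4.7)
The plan is essentially to assemble the sequence of reductions already developed in the preceding subsections and lemmas, composing them to obtain the stated double-indexed parametrization.

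First, I would note the elementary bijection
\[
K \bsl (\GKFl{P}{Q}) \simeq P \bsl G / Q, \qquad K\cdot(gP,kQ)\mapsto Pg^{-1}kQ,
\]
as recorded in \S\ref{subsection:reduction.to.Bruhat}. Using the projection $Q = P'\cap K \hookrightarrow P'$, this gives the reduction map $\Phi: K\bsl(\GKFl{P}{Q}) \to P\bsl G/P' \simeq \jwjprime$, whose fibers decompose the orbit set into a disjoint union indexed by $w\in\jwjprime$. By Lemma~\ref{lemma:bij.cosets}, the fiber $\Phi^{-1}(w)$ is naturally bijective with $(\wPprime{w}) \bsl P'/Q$, which produces the outer coproduct over $w$.

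Next, for each fixed $w$, I would apply the second reduction (\S\ref{subsection:reduction.to.KGB}) coming from the KGB decomposition of the smaller symmetric pair $(L',L'_K)$. By Lemma~\ref{lemma:map.w.to.v}, the map $\Psi_w: (\wPprime{w})\bsl P'/Q \to \kgbw{w}$ obtained via the Levi factorization is well-defined, and its fibers decompose the previous fiber further over $v\in\kgbw{w}$. At this point I would invoke Lemma~\ref{lemma:conjugate.action.on.double.cosets}(1) to identify $\Psi_w^{-1}(v)$ with $(\wPprime{wv})\bsl(\wPprime{wv})\, U'Q/Q$, and then Lemma~\ref{lemma:conjugate.action.on.double.cosets}(3) to rewrite this last space as
\[
\Bigl( (\wPcap{U'}{wv}) \bsl U' / U'_K \Bigr) \Bigm/ \wPcap{L'_K}{wv},
\]
where the action of $\wPcap{L'_K}{wv}$ is by conjugation on $U'$. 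Splicing the three bijections together gives statement~\eqref{main.thm:item:classification.via.unipotent.double.cosets}.

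Statement~(2) is then immediate: since $\jwjprime$ and each $\kgbw{w}$ are finite sets, $K\bsl(\GKFl{P}{Q})$ is finite if and only if each of the finitely many quotient spaces in the coproduct is finite, which is exactly the stated finiteness condition on the conjugation actions.

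The substantive content of the theorem has in fact been absorbed into Lemmas~\ref{lemma:bij.cosets}, \ref{lemma:map.w.to.v}, and \ref{lemma:conjugate.action.on.double.cosets}, which in turn rest on the semidirect product formula $\wP{wv}\cap P' = (\wP{wv}\cap L')\ltimes (\wP{wv}\cap U')$ from \cite[Prop.~2.8.9]{Carter.1985}. So the only remaining task is bookkeeping: verifying that the sequence of bijections $\Phi$, $\Psi_w$, and the inner identification is coherent with the chosen representatives of the double coset spaces $\jwjprime$ and $\kgbw{w}$ inside $N_G(T)$ and $L'$ respectively. The main potential pitfall, and the step I would be most careful about, is confirming that the identification of $\Psi_w^{-1}(v)$ with $(\wPprime{wv})\bsl(\wPprime{wv})U'Q/Q$ is compatible with the choice of representative $v\in L'$—i.e., that conjugating $w$ by $v$ to form $wv$ does not introduce ambiguity when passing back to double cosets in $G$. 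This is a straightforward check once one keeps in mind that $v\in L' \subset P'$ normalizes nothing outside $L'$, but it is the one place where the bookkeeping is genuinely easy to miscount.
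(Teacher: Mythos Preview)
Your proposal is correct and mirrors the paper's own proof exactly: the paper simply states that part~(1) was already proved (meaning it follows by combining Lemmas~\ref{lemma:bij.cosets}, \ref{lemma:map.w.to.v}, and \ref{lemma:conjugate.action.on.double.cosets} as you outline), and that part~(2) is immediate from the finiteness of $\jwjprime$ and each $\kgbw{w}$. Your additional remarks on bookkeeping and the choice of representatives are accurate and more explicit than the paper, but the argument is the same.
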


\begin{proof}
The claim (1) was already proved.  
Since $ \jwjprime $ is a finite set and $ \kgbw{w} $ is also finite for any $ w \in \jwjprime $, the claim (2) follows.
\end{proof}

\begin{corollary}
The double flag variety $ \GKFl{P}{Q} $ is of finite type 
if and only if for any $g \in P W L'$, the conjugation action of $\wPcap{L'_K}{g}$ on 
$ (\wPcap{U'}{g}) \bsl U' / U'_K $ has only finitely many orbits.
\end{corollary}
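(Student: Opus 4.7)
The plan is to deduce the corollary from Theorem~\ref{main.thm:classify.K.orbits.on.double.flag.variety} by showing that the family of orbit-space conditions indexed by $g \in PWL'$ coincides, up to conjugation, with the family indexed by the pairs $(w,v)$ with $w \in \jwjprime$ and $v \in \kgbw{w}$. One direction is immediate: since we take representatives of $\jwjprime$ in $W \subset G$ and representatives of $\kgbw{w} = (P^w \cap L') \bsl L' / L'_K$ in $L'$, every such product $wv$ lies in $WL' \subset PWL'$. Hence finiteness for all $g \in PWL'$ forces finiteness for all $(w,v)$, and Theorem~\ref{main.thm:classify.K.orbits.on.double.flag.variety} gives finite type.

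For the converse, I would first rewrite an arbitrary $g \in PWL'$ in a normal form. Write $g = p_0 w_0 l_0$ with $p_0 \in P$, $w_0 \in W$, $l_0 \in L'$, and decompose $w_0 = \sigma w \tau$ with $\sigma \in W_J$, $w \in \jwjprime$, $\tau \in W_{J'}$; since representatives of $W_J$ lie in $L \subset P$ and representatives of $W_{J'}$ lie in $L'$, absorbing gives $g = p_1 \cdot w \cdot l_1$ with $p_1 \in P$, $l_1 \in L'$. Next I apply the KGB decomposition $L' = (P^w \cap L') \cdot \kgbw{w} \cdot L'_K$ to write $l_1 = \pi v \lambda$ with $\pi \in P^w \cap L'$, $v \in \kgbw{w}$ (representative in $L'$), and $\lambda \in L'_K$. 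Then
\begin{equation*}
g \;=\; p_1 \, w \, \pi \, v \, \lambda \;=\; \bigl(p_1 \cdot (w \pi w^{-1})\bigr) \cdot (wv) \cdot \lambda \;=\; p \cdot (wv) \cdot \lambda,
\end{equation*}
with $p := p_1 (w \pi w^{-1}) \in P$ because $\pi \in P^w = w^{-1} P w$.

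It remains to see that the datum for $g$ is isomorphic to the datum for $wv$. From $g = p \cdot (wv) \cdot \lambda$ we obtain $P^g = \lambda^{-1} P^{wv} \lambda$. Since $\lambda \in L'_K \subset L' \cap K$, conjugation by $\lambda$ preserves $U'$ (because $\lambda \in L'$ normalizes $U'$), preserves $U'_K = U' \cap K$ (because $\lambda \in K$ as well), and preserves $L'_K$. Intersecting therefore yields
\begin{equation*}
P^g \cap U' \;=\; \lambda^{-1}\bigl(P^{wv} \cap U'\bigr)\lambda,
\qquad
P^g \cap L'_K \;=\; \lambda^{-1}\bigl(P^{wv} \cap L'_K\bigr)\lambda.
\end{equation*}
Conjugation by $\lambda$ on $U'$ then induces a bijection
\begin{equation*}
\bigl(P^g \cap U'\bigr) \bsl U' / U'_K \;\xrightarrow{\sim}\; \bigl(P^{wv} \cap U'\bigr) \bsl U' / U'_K,
\end{equation*}
which is equivariant with respect to the conjugation actions of $P^g \cap L'_K$ and $P^{wv} \cap L'_K$ (identified via conjugation by $\lambda$). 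In particular, the two orbit spaces have the same cardinality, so one is finite if and only if the other is. Combining this with Theorem~\ref{main.thm:classify.K.orbits.on.double.flag.variety} gives the corollary.

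There is no serious obstacle here: the argument is essentially a bookkeeping exercise combining the Bruhat double coset decomposition $\jwjprime$ with the smaller-symmetric-space decomposition $\kgbw{w}$, together with the observation that the leftover $P$-factor is killed inside the conjugation defining $P^g$ while the leftover $L'_K$-factor merely conjugates the orbit-space data.
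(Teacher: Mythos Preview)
Your proof is correct and follows essentially the same approach as the paper. The paper's proof is condensed into the single observation $\bigcup_{w \in \jwjprime} \bigcup_{v \in \kgbw{w}} P w v L'_K = \bigcup_{w \in \jwjprime} P w L' = P W L'$, leaving implicit the invariance of the orbit-space condition along each double coset $P(wv)L'_K$; you have spelled out that invariance explicitly via the conjugation-by-$\lambda$ argument.
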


\begin{proof}
This follows directly from
 Theorem~\ref{main.thm:classify.K.orbits.on.double.flag.variety}
 once one knows 
\begin{equation*}
\bigcup_{w \in \jwjprime} \bigcup_{v \in \kgbw{w}} P w v L'_K 
= \bigcup_{w \in \jwjprime} P w L' = P W L' .
\end{equation*}
\end{proof}

\section{Reduction to linear actions}
\label{section:reduction.to.linear}

Under the setting of \S~\ref{section:double.flag}, 
 we now assume that $ P' = B $ 
so that $ Q = B \cap K =: B_K $ is a Borel subgroup of $ K $
 and we consider the double flag variety 
$\GFl_P \times \KFl_{B_K} =G/P \times K/{B_K}$.

We take a $ \theta $-stable maximal torus $ T $ as in \S~\ref{section:preliminaries}, 
and denote by $ B = T U_B $ a Levi decomposition of $ B $ ($ U_B $ denotes the unipotent radical of $ B $).  
In our former notation, 
\begin{align*}
&
P' = B = T U_B = L' U', 
\\
&
Q  = B_K = T_K U_{B_K} = L'_K U'_K 
\qquad (T_K = T \cap K, \; U_{B_K} = U_B \cap K),
\\
&
\jwjprime = \jwj{J}{\emptyset} =: \jwj{J}{} \ni w, 
\\
&
\wPcap{L'}{w} = \wPcap{T}{w} = T \quad (\text{for any $ w \in \jwj{J}{} $}),
\\
&
\kgbw{w} = (\wPcap{L'}{w}) \bsl L' / L'_K = T \bsl T / T_K = \{ e \},
\\
&
\wPcap{L'_K}{w v} = T_K , \quad
\wPcap{U'}{w} = \wPcap{U_B}{w}. 
\end{align*}
Then Theorem~\ref{main.thm:classify.K.orbits.on.double.flag.variety} (1)
 in this case can be rewritten as follows.

\begin{proposition}
\label{proposition:parametrization.of.K.orbits.when.Q.is.Borel}
Let $P=P_J$ be a standard
 parabolic subgroup of $G$ and $B_K$ a Borel subgroup of $K$.
Then the $K$-orbits on the double flag variety are parametrized as follows:
\begin{equation}
\label{eq:Korbit.decomp.when.Q.is.Borel}
K \bsl (\GKFl{P}{B_K}) \simeq 
\coprod_{w \in \jwj{J}{}} \Bigl( (w^{-1} P w \cap U_B) \bsl U_B / U_{B_K} \Bigr) \Bigm/T_K.
\end{equation}
\end{proposition}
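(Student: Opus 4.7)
The plan is to obtain Proposition~\ref{proposition:parametrization.of.K.orbits.when.Q.is.Borel} as a direct specialization of Theorem~\ref{main.thm:classify.K.orbits.on.double.flag.variety}~\eqref{main.thm:item:classification.via.unipotent.double.cosets} to the case $P' = B$. The setting of that theorem requires $P'$ to be a $\theta$-stable standard parabolic with standard Levi decomposition $P'=L'U'$ and $Q=P'\cap K$. Since $B$ is $\theta$-stable with Levi decomposition $B = T U_B$, and since $B_K = B\cap K = T_K U_{B_K}$ is known to be a Borel of $K$, these hypotheses are satisfied with $L'=T$, $U'=U_B$, $L'_K=T_K$, $U'_K=U_{B_K}$, and $J' = \emptyset$.

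The first step is to identify each index set appearing in Theorem~\ref{main.thm:classify.K.orbits.on.double.flag.variety}. Because $W_{J'}=W_{\emptyset}=\{e\}$, the outer indexing set becomes $\jwj{J}{\emptyset} = W_J\backslash W =: \jwj{J}{}$. Next, for each representative $w \in \jwj{J}{}$, the inner set $\kgbw{w} = (\wPcap{L'}{w})\backslash L' / L'_K = (\wP{w} \cap T)\backslash T/T_K$ collapses to a single point: indeed $T\subset B \subset P$ implies $T = w^{-1}Tw \subset \wP{w}$, hence $\wP{w}\cap T = T$, and so $\kgbw{w} = T\backslash T/T_K = \{e\}$. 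One may therefore take $v=e$ in every summand, which eliminates the inner disjoint union.

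The remaining ingredients to identify are the acting group and the double coset space for each $w$. With $v=e$, one has $\wPcap{L'_K}{wv} = \wP{w}\cap T_K = T_K$ (again because $T \subset \wP{w}$), and $\wPcap{U'}{wv} = \wP{w}\cap U_B = w^{-1}Pw\cap U_B$. Substituting these identifications into the formula of Theorem~\ref{main.thm:classify.K.orbits.on.double.flag.variety}~\eqref{main.thm:item:classification.via.unipotent.double.cosets} yields exactly the right-hand side of \eqref{eq:Korbit.decomp.when.Q.is.Borel}.

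There is no real obstacle here: the proposition is essentially a book-keeping corollary of the main theorem, and the only points requiring care are (i) that the standard Levi of the Borel is the maximal torus $T$ rather than a larger group, and (ii) the observation $T\subset \wP{w}$ which is what collapses $\kgbw{w}$ to a point and reduces $\wPcap{L'_K}{w}$ to $T_K$. Both are immediate from $T\subset B\subset P$ and the fact that $T$ is normalized by any representative $w\in N_G(T)$.
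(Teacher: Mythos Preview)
Your proof is correct and follows essentially the same approach as the paper: the proposition is obtained by specializing Theorem~\ref{main.thm:classify.K.orbits.on.double.flag.variety} to $P'=B$, noting $L'=T$, $U'=U_B$, and that $T\subset \wP{w}$ collapses $\kgbw{w}$ to a point and reduces the acting group to $T_K$. The paper presents these identifications as a list immediately preceding the proposition rather than as a formal proof, but the content is identical to what you have written.
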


\begin{remark}
If $ \rank G = \rank K $, then we have $ T = T_K $.
We note that $ \rank G = \rank K $ if and only if the involutive automorphism $\theta$ is inner.
\end{remark}

We can reduce the complicated quotient space in the above theorem 
 to a quotient of a vector space by linear action of torus
 if $ w $ is the longest element.  To do so, we need some preparations.

\subsection{Linearization of unipotent double coset spaces}

In this subsection, we study the unipotent double coset spaces appearing in Proposition~\ref{proposition:parametrization.of.K.orbits.when.Q.is.Borel}.  
We will prove that the double coset space can be reduced to the quotient space
 of a linear space by a linear action under certain assumptions.  

Let $ U $ be a unipotent group on which the torus $ T^1 = \bbC^{\times} $ acts by group automorphisms.  
Let us denote by $ \rho : T^1 \to \Aut U $ the given action.  
Then 
$ T^1 $ acts on the Lie algebra 
$ \lie{u} = \Lie U $ by differential, which we also denote by the same letter $ \rho $.  

We assume the following in this subsection:

\begin{assumption}
The weights of $ T^1 $ on $ \lie{u} $ are all positive. 
\end{assumption}

\begin{lemma}
\label{lemma:isomorphism.by.product.of.exp.map}
Let $ \lie{u} = \lie{u}_1 \oplus \cdots \oplus \lie{u}_n $ be a decomposition of $ \lie{u} $ as a $ T^1 $-module and $U_i=\exp(\lie{u}_i)$.  
Then the multiplication map 
\begin{equation*}
\xymatrix @R=3pt {
\varphi : U_1 \times \cdots \times U_n \ar[r] & U 
\\
(g_1, \dots, g_n) \ar@{|->}[r] & g_1 \cdots g_n
}
\end{equation*}
is an isomorphism.
\end{lemma}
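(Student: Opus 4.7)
The plan is to pass through the exponential map $\exp\colon \lie{u} \to U$, which is a $T^1$-equivariant isomorphism of varieties since $U$ is unipotent, and then reduce the lemma to a statement that can be solved by induction on the $T^1$-weight. Since $\exp$ restricts to an isomorphism $\lie{u}_i \xrightarrow{\sim} U_i$ of varieties, it suffices to prove that the ``BCH multiplication'' map
\[
\psi\colon \lie{u}_1 \times \cdots \times \lie{u}_n \longrightarrow \lie{u},
\qquad (X_1,\dots,X_n) \longmapsto \log\bigl(\exp X_1 \cdots \exp X_n\bigr),
\]
is an isomorphism of varieties. Nilpotency of $\lie{u}$ makes $\psi$ a polynomial map, and by the Baker--Campbell--Hausdorff formula we have $\psi(X_1,\dots,X_n) = \sum_i X_i + R(X_1,\dots,X_n)$ where $R$ is a sum of iterated Lie brackets of the $X_j$.

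First, I would use the positive-weight assumption to introduce the weight grading $\lie{u} = \bigoplus_{d>0}\lie{u}^{(d)}$ and note that each $T^1$-submodule decomposes compatibly as $\lie{u}_i = \bigoplus_{d>0} \lie{u}_i^{(d)}$ with $\lie{u}^{(d)} = \bigoplus_i \lie{u}_i^{(d)}$. Since $T^1$ acts by Lie algebra automorphisms, the bracket respects the grading: $[\lie{u}^{(a)},\lie{u}^{(b)}] \subseteq \lie{u}^{(a+b)}$. Positivity of the weights then ensures that any iterated bracket of total weight $e$ involves only factors of weight strictly smaller than $e$.

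The heart of the proof is then to construct a polynomial two-sided inverse to $\psi$ by induction on the weight. Given $Y \in \lie{u}$, decompose $Y = \sum_d Y^{(d)}$ and look for $(X_1,\dots,X_n)$ with $\psi(X_1,\dots,X_n) = Y$. At the minimal weight $d_0$, no bracket contributes, so the equation reads $\sum_i X_i^{(d_0)} = Y^{(d_0)}$, which has a unique solution by projecting $Y^{(d_0)}$ onto each summand $\lie{u}_i^{(d_0)}$. At a general weight $e$, the equation becomes
\[
\sum_i X_i^{(e)} = Y^{(e)} - R(X_1,\dots,X_n)^{(e)},
\]
and by the previous observation the right-hand side is a polynomial in the already-determined components $X_j^{(d)}$ with $d < e$; projecting onto each $\lie{u}_i^{(e)}$ again determines $X_i^{(e)}$ uniquely. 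This constructs a polynomial inverse to $\psi$, so $\psi$ is an isomorphism of varieties, and consequently so is $\varphi$.

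The main obstacle is simply bookkeeping: verifying that the bracket really increases weight strictly and that the solution at each weight level depends polynomially on $Y$ and on previously-determined data, so that the inductive construction of the inverse is globally a morphism of varieties rather than only a set-theoretic bijection. Once the compatibility of the grading with the bracket is checked, the rest of the argument is mechanical.
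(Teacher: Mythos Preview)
Your proof is correct, and the approach is genuinely different from the paper's.

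The paper argues by induction on $\dim\lie{u}$: it picks a weight vector $X$ of \emph{maximal} weight (hence central), sets $\lie{z}=\bbC X$, passes to the quotient $\bar{\lie{u}}=\lie{u}/\lie{z}$, and lifts bijectivity of $\bar\varphi$ to bijectivity of $\varphi$ by hand. Having shown $\varphi$ is a bijective morphism, the paper then invokes the general fact (from Springer's book) that a bijective morphism to a normal variety over an algebraically closed field of characteristic zero is an isomorphism.

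Your argument instead transports everything to $\lie{u}$ via $\exp/\log$ and uses the Baker--Campbell--Hausdorff formula together with the positive $T^1$-grading to build an explicit polynomial inverse to $\psi$, by induction on the weight rather than on the dimension. The crucial point---that the bracket strictly raises weight, so the weight-$e$ component of the BCH remainder depends only on lower-weight data---is exactly what you identify, and it makes the induction go through. This buys you the inverse morphism directly, so you do not need the ``bijective implies isomorphism'' citation. Conversely, the paper's route avoids writing down BCH and works entirely with the group multiplication and a one-dimensional central quotient, at the cost of that extra reference. Both proofs ultimately exploit the same structural feature (positive weights force a triangular structure), but yours packages it more constructively.
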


\begin{proof}
The argument goes by the induction of the dimension of $\lie{u}$.
Let $m$ be the maximum weight appearing in $\lie{u}$.
We may assume that the weight $m$ appears in $\lie{u}_i$ for some $1\leq i\leq n$.
Take a non-zero weight vector $X \in \lie{u}_i$ with weight $m$.
Let $\lie{z} = \bbC X$.
Then $\lie{z}$ is contained in the center 
of the Lie algebra $\lie{u}$.

Now we set up the induction.
Let $\bar{\lie{u}}_i := \lie{u}_i / \lie{z}$,
$\bar{\lie{u}}_j := \lie{u}_j$ for $j\neq i$,
$\bar{\lie{u}} := \bigoplus_{j=1}^n \bar{\lie{u}}_j$,
$Z := \exp(\lie{z})$,
$\bar{U} := U/Z$.
Then the map
$\bar\varphi: \bigoplus_{j=1}^n \bar{\lie{u}}_j \rightarrow \bar{U}$
is defined by
$(Y_1,\dots, Y_n) \mapsto \exp Y_1 \cdots \exp Y_n$.
We assume $\bar\varphi$ is bijective by induction hypothesis.

Let us prove the surjectivity of $\varphi$.
Take $u \in U$ and write $\bar{u} \in \bar{U}$ for its image by the quotient map.
Since $\bar\varphi$ is surjective, there exists 
$(Y_1,\dots,Y_n) \in \bar{\lie{u}}$ such that
$\bar\varphi(Y_1,\dots,Y_n) = \bar{u}$.
If we take a lift $X_i \in \lie{u}_i$ of $Y_i \in \lie{u}_i / \lie{z}$
 and put $X_j:=Y_j$ for $j\neq i$,
then the image of 
$\varphi(X_1,\dots,X_n) \in U$
and that of $u \in U$ in $\bar{U}$ are equal.
Hence there exists $z_i \in Z$ such that
$u= \varphi(X_1,\dots, X_n) z_i$.
Take $Z_i \in \lie{z}$ such that $\exp(Z_i) = z_i$.
Then $u= \varphi(X_1,\dots,X_{i-1},X_i+Z_i,X_{i+1},\cdots, X_n)$, showing
the surjectivity of $\varphi$. 

The injectivity of $\varphi$ is similarly proved.
Suppose $(X_1,\dots, X_n), (X'_1,\dots, X'_n) \in \lie{u}$
has the same image by $\varphi$ in $U$.
Put $(Y_1,\dots, Y_n)\in \bar{\lie{u}}$ the image of $(X_1,\dots, X_n)$.
Similarly for $(Y'_1,\dots, Y'_n)$.
Since 
$\bar\varphi(Y_1,\dots,Y_n) = \bar\varphi(Y'_1,\dots, Y'_n)$,
 we have $(Y_1,\dots, Y_n) = (Y'_1,\dots, Y'_n)$
 by the injectivity assumption of $\bar\varphi$.
Hence $X_j = X'_j$ for $j\neq i$.
Then
\begin{align*}
&\exp(X_1) \cdots \exp(X_{i-1}) \exp(X_i) \exp(X_{i+1})\cdots  \exp(X_n)\\
&= \exp(X_1) \cdots \exp(X_{i-1}) \exp(X'_i) \exp(X_{i+1})\cdots  \exp(X_n)
\end{align*}
and this implies $\exp(X_i) = \exp(X'_i)$.
Since the exponential map is bijective for a unipotent group,
we have $X_i = X'_i$.

The lemma follows from the fact that a bijective morphism $f:X\to Y$ between algebraic varieties over an algebraically closed field of characteristic zero is an isomorphism if $Y$ is normal (see \cite[Theorem 5.1.6 (iii) and Theorem 5.2.8]{Springer.1998}).
\end{proof}

Let $ U $ and $ T^1 $ be as above.  
Let $ U_1 , U_2 \subset U $ be subgroups of $ U $ which are stable under the action of $ T^1 $.  
Take a decomposition $ \lie{u} = (\lie{u}_1 + \lie{u}_2) \oplus \lie{V} $ as a $ T^1 $-module.  
We further assume that $ \lie{V} $ is stable under the adjoint action of $ U_1 \cap U_2 $.

\begin{lemma}
\label{proposition:linearinzation.of.double.cosets}
Under the above notations and assumptions, 
the map
\begin{equation*}
\Phi:\lie{V} / (U_1 \cap U_2) \to  U_1 \bsl U/ U_2
\end{equation*}
given by $(U_1 \cap U_2)\cdot Z\mapsto U_1(\exp Z)U_2$
is bijective.
Here $ U_1 \cap U_2 $ acts on $ \lie{V} $ by the adjoint action.
\end{lemma}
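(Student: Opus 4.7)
The plan is to linearize the double coset space by decomposing $U$ multiplicatively via Lemma~\ref{lemma:isomorphism.by.product.of.exp.map}, arranging $\exp(\lie{V})$ to sit between $U_1$ and a complementary piece of $U_2$.

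First I would construct the appropriate $T^1$-stable decomposition. Since $T^1$ acts completely reducibly, there is a $T^1$-stable complement $\lie{W} \subset \lie{u}_2$ of $\lie{u}_1 \cap \lie{u}_2$, giving $\lie{u}_2 = (\lie{u}_1 \cap \lie{u}_2) \oplus \lie{W}$. This yields $\lie{u}_1 + \lie{u}_2 = \lie{u}_1 \oplus \lie{W}$ with $\lie{W} \subset \lie{u}_2$, and combining with the standing hypothesis produces the $T^1$-module decomposition $\lie{u} = \lie{u}_1 \oplus \lie{V} \oplus \lie{W}$. Applying Lemma~\ref{lemma:isomorphism.by.product.of.exp.map} to this decomposition, the multiplication map $U_1 \times \exp(\lie{V}) \times \exp(\lie{W}) \to U$ is an isomorphism of varieties. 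Applying the same lemma to $U_2$ with the decomposition $\lie{u}_2 = (\lie{u}_1 \cap \lie{u}_2) \oplus \lie{W}$, the multiplication $(U_1 \cap U_2) \times \exp(\lie{W}) \to U_2$ is also an isomorphism, using $\exp(\lie{u}_1 \cap \lie{u}_2) = U_1 \cap U_2$ for closed subgroups of a unipotent group.

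Next I would verify the three required properties of $\Phi$. Well-definedness follows immediately from the identity $g \exp(Z) g^{-1} = \exp(\Ad(g) Z)$ together with the $U_1 \cap U_2$-stability of $\lie{V}$. Surjectivity is immediate from the first multiplicative decomposition: given $u \in U$, write $u = u_1 \exp(Z)\, w$ with $u_1 \in U_1$, $Z \in \lie{V}$, $w \in \exp(\lie{W}) \subset U_2$, so $U_1 u U_2 = U_1 \exp(Z) U_2$. For injectivity, suppose $\exp Z' = u_1 \exp(Z) u_2$ for some $Z, Z' \in \lie{V}$, $u_1 \in U_1$, $u_2 \in U_2$. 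Decompose $u_2 = c w$ with $c \in U_1 \cap U_2$ and $w \in \exp(\lie{W})$; then
\[ \exp Z' = (u_1 c)\, \exp\bigl(\Ad(c^{-1}) Z\bigr)\, w, \]
and $\Ad(c^{-1}) Z \in \lie{V}$ by hypothesis. Comparing with the trivial decomposition $\exp Z' = 1 \cdot \exp Z' \cdot 1$ in $U_1 \cdot \exp(\lie{V}) \cdot \exp(\lie{W})$ and invoking the uniqueness part of Lemma~\ref{lemma:isomorphism.by.product.of.exp.map}, one obtains $u_1 c = 1$, $w = 1$, and $Z' = \Ad(c^{-1}) Z$ with $c^{-1} \in U_1 \cap U_2$, giving the desired equality of $(U_1 \cap U_2)$-orbits.

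The main technical point is to arrange $\lie{W}$ to lie \emph{inside} $\lie{u}_2$ rather than merely inside $\lie{u}_1 + \lie{u}_2$; without this, $\exp(\lie{W})$ would not be contained in $U_2$ and neither the surjectivity argument nor the uniqueness comparison in the injectivity step would go through. Complete reducibility of the $T^1$-action on $\lie{u}_2$ is what makes this possible. Beyond this subtlety, the proof is a careful bookkeeping reduction to Lemma~\ref{lemma:isomorphism.by.product.of.exp.map}.
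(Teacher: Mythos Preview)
Your proof is correct and follows essentially the same approach as the paper: both apply Lemma~\ref{lemma:isomorphism.by.product.of.exp.map} to a $T^1$-stable direct-sum decomposition of $\lie{u}$, then read off surjectivity and injectivity from the uniqueness of the product expression. The only cosmetic difference is that the paper uses a four-term decomposition $\lie{u} = \lie{w}_1 \oplus \lie{V} \oplus (\lie{u}_1 \cap \lie{u}_2) \oplus \lie{w}_2$ (splitting $U_1$ as well as $U_2$), whereas you keep $\lie{u}_1$ intact and use the three-term decomposition $\lie{u}_1 \oplus \lie{V} \oplus \lie{W}$, which slightly streamlines the injectivity bookkeeping.
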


\begin{proof}
We take a $T^1$-stable complementary subspace
 $ \lie{w}_i \; (i = 1, 2) $ of $ \lie{u}_1 \cap \lie{u}_2 $ in $ \lie{u}_i $
 so that 
$ \lie{u}_i = \lie{w}_i \oplus ( \lie{u}_1 \cap \lie{u}_2 ) $ 
is a decomposition of a $ T^1 $-module.  
Then we have a decomposition of $ \lie{u} $ as 
\begin{equation*}
\lie{u} = \lie{w}_1 \oplus \lie{V} \oplus ( \lie{u}_1 \cap \lie{u}_2 ) \oplus \lie{w}_2. 
\end{equation*}
By applying Lemma~\ref{lemma:isomorphism.by.product.of.exp.map} to this decomposition, we see that every element $u\in U$ is uniquely written as
 $u=u_1(\exp Z)u_3u_2$ where
 $u_1\in \exp \lie{w}_1$, $Z\in \lie{V}$, $u_3 \in U_1\cap U_2$,
 and $u_2 \in \exp \lie{w}_2$.
Therefore, the map $\Phi$ is surjective.
To prove the injectivity, 
 suppose that $ \exp Z_1 = u_1 (\exp Z_2) u_2 $ for $ Z_1, Z_2 \in \lie{V} $ and $ u_i \in U_i \; (i = 1, 2) $.  
By Lemma~\ref{lemma:isomorphism.by.product.of.exp.map} again,
 we have $ u_1 = u_1'' \, u_1' $ where 
$ u_1'' \in \exp (\lie{w}_1) $ and 
$ u_1'  \in \exp (\lie{u}_1 \cap \lie{u}_2) = U_1 \cap U_2 $.  
Also we can write 
$ u_2 = u_2' \, u_2''$ where 
$ u_2'  \in U_1 \cap U_2 $ and  
$ u_2'' \in \exp (\lie{w}_2) $.  
Then we can compute as
\begin{align*}
u_1 (\exp Z_2) \, u_2
&= u_1'' u_1' (\exp Z_2) \, u_2' u_2'' 
\\
&= u_1'' (\exp \Adj (u_1') Z_2) \, (u_1' u_2') u_2''
\in 
\exp \lie{w}_1 \, \exp \lie{V} \, \exp ( \lie{u}_1 \cap \lie{u}_2 ) \, \exp \lie{w}_2 .
\end{align*}
Since the decomposition is unique, 
we get 
$ Z_1 = \Adj(u_1') Z_2 $, 
which shows $ Z_1 $ and $ Z_2 $ are in the same 
$ \Adj(U_1 \cap U_2) $-orbit.
\end{proof}

\subsection{Parametrization of open stratum via linear actions}

In general, we cannot apply
 Lemma~\ref{proposition:linearinzation.of.double.cosets}
to Proposition~\ref{proposition:parametrization.of.K.orbits.when.Q.is.Borel}
 because our assumptions do not hold for general $ w \in \jwj{J}{} = W_J \bsl W $.  
However, we can apply it to 
the fiber of the longest element of $ \jwj{J}{}  $, which corresponds to
 the open stratum of the Bruhat decomposition $P\bsl G / B$.  
Note that this stratum gives an open $ K $-stable set in $ \GKFl{P}{B_K} $.  

Let us denote by $ w_0 \in W $ the longest element in the Weyl group and 
take the minimal representative $ \tilde{w}_0 \in \jwj{J}{} $ of 
a coset $ W_J w_0  $ containing the longest element.   
We put 
\begin{align}
&
\opposite{J} := - w_0(J) \subset \Pi, 
\label{Eq:definition.of.Jstar}
\\
&
P_{\opposite{J}} = L_{\opposite{J}} U_{\opposite{J}} 
\text{ : Levi decomposition}, 
\notag
\\
&
\wP{\tilde{w}_0} \cap U_B = L_{\opposite{J}} \cap U_B = U_{L_{\opposite{J}}} 
\text{ : a maximal unipotent subgroup of $ L_{\opposite{J}} $}.
\notag
\end{align}
With this notation, we can 
state the following proposition which linearizes the unipotent double coset in 
\eqref{eq:Korbit.decomp.when.Q.is.Borel}.

\begin{proposition}
\label{proposition:L_JcapK.action.on.Lie.algebra.of.V}
\begin{thmenumerate}
\item
The intersection 
$
L_{\opposite{J}} \cap K = 
L_{\opposite{J} \cap \theta(\opposite{J})} \cap K 
$ is a connected reductive group 
and $ B_K \cap L_{\opposite{J}} $ is a Borel subgroup of $ L_{\opposite{J}} \cap K $.
\item
The reductive subgroup $ L_{\opposite{J}} \cap K $ acts on
 $ \lie{u}_{P_{\opposite{J}}}
 \cap \lie{g}^{-\theta} $ by the adjoint action 
and the exponential map induces 
a bijective map 
\begin{equation}
\xymatrix @1 @M=5pt {
(\lie{u}_{P_{\opposite{J}}}
 \cap \lie{g}^{-\theta} )
 / (B_K \cap L_{\opposite{J}}) \ar[r]^-{\sim} & 
\Bigl( (\wP{\tilde{w}_0} \cap U_B) \bsl U_B / U_{B_K} \Bigr) \bigm/ T_K .
}
\label{eq:fiber.is.spherical}
\end{equation}
Note that the target space is the term for $ w=\tilde{w}_0 $
 in Equation \eqref{eq:Korbit.decomp.when.Q.is.Borel}.
\end{thmenumerate}
\end{proposition}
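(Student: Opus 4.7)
The plan is to prove (1) by combining the $\theta$-action on $\opposite{J}$ with Steinberg's theorem on fixed-point subgroups, and to prove (2) by applying Lemma~\ref{proposition:linearinzation.of.double.cosets} to $U = U_B$ with $U_1 = U_{L_{\opposite{J}}} = \wP{\tilde{w}_0} \cap U_B$, $U_2 = U_{B_K}$, and complement $\lie{V} = \lie{u}_{P_{\opposite{J}}} \cap \lie{g}^{-\theta}$, followed by passage to $T_K$-orbits.

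For (1), since $w_0$ is $\theta$-fixed (being the unique longest element) and $\theta$ permutes $\Pi$, one has $\theta(\opposite{J}) = -w_0(\theta(J))$ and hence $\theta(L_{\opposite{J}}) = L_{\theta(\opposite{J})}$. The standard identity $L_{I_1} \cap L_{I_2} = L_{I_1 \cap I_2}$ for subsets $I_1, I_2 \subset \Pi$ gives $L_{\opposite{J}} \cap \theta(L_{\opposite{J}}) = L_{\opposite{J} \cap \theta(\opposite{J})}$. Every $g \in L_{\opposite{J}} \cap K$ satisfies $g = \theta(g) \in \theta(L_{\opposite{J}})$, so the inclusion $L_{\opposite{J}} \cap K \subset L_{\opposite{J} \cap \theta(\opposite{J})} \cap K$ holds, and the reverse inclusion is trivial. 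Now $L_{\opposite{J} \cap \theta(\opposite{J})}$ is a $\theta$-stable connected reductive subgroup containing the $\theta$-stable Borel $B \cap L_{\opposite{J} \cap \theta(\opposite{J})}$, so Steinberg's theorem applied to this smaller symmetric pair yields the connectedness and reductivity of its fixed-point subgroup. The Borel assertion follows from the general fact that $B' \cap H^\theta$ is a Borel of $H^\theta$ for a $\theta$-stable Borel $B'$ of a $\theta$-stable connected reductive $H$, applied to $H = L_{\opposite{J} \cap \theta(\opposite{J})}$ and $B' = B \cap H$.

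For (2), I first construct $T^1 \subset T_K$ with strictly positive weights on $\lie{u}_B$: starting from a strictly dominant cocharacter $\chi_0 \in X_*(T)$, the symmetrization $\chi := \chi_0 + \theta_* \chi_0$ is $\theta$-invariant and, after passing to a multiple if necessary, lies in $X_*(T_K)$; since $\theta^*$ preserves $\Delta^+$, one has $\alpha(\chi) = \alpha(\chi_0) + (\theta^*\alpha)(\chi_0) > 0$ for every $\alpha \in \Delta^+$. This $T^1$ automatically normalizes $U_1$ (sitting in $T$) and $U_2$ (sitting in $K \cap N_G(U_B)$). Next I verify the direct-sum decomposition $\lie{u}_B = (\lie{u}_1 + \lie{u}_2) \oplus \lie{V}$ by a case analysis over $\theta$-orbits $\{\alpha, \theta^* \alpha\}$ in $\Delta^+$ according to the partition $\Delta^+ = \Delta^+_{\opposite{J}} \sqcup (\Delta^+ \setminus \Delta_{\opposite{J}})$. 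The stability of $\lie{V}$ under $U_1 \cap U_2 = U_{L_{\opposite{J}}} \cap K$ is immediate from the containment of this group in both $L_{\opposite{J}}$, which normalizes $\lie{u}_{P_{\opposite{J}}}$, and $K$, which preserves $\lie{g}^{-\theta}$. Lemma~\ref{proposition:linearinzation.of.double.cosets} then supplies a bijection $\lie{V}/(U_1 \cap U_2) \xrightarrow{\sim} (\wP{\tilde{w}_0} \cap U_B) \bsl U_B / U_{B_K}$ via $Z \mapsto \exp(Z)$. This bijection is $T_K$-equivariant (adjoint action on the source, conjugation on the target), and from part~(1) one has the Levi decomposition $B_K \cap L_{\opposite{J}} = T_K \ltimes (U_1 \cap U_2)$, so passage to $T_K$-orbits yields \eqref{eq:fiber.is.spherical}.

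The main obstacle is the direct-sum decomposition $\lie{u}_B = (\lie{u}_{L_{\opposite{J}}} + \lie{u}_{B_K}) \oplus \lie{V}$: because neither $\lie{u}_{L_{\opposite{J}}}$ nor $\lie{u}_{P_{\opposite{J}}}$ is $\theta$-stable in general, this cannot be read off from any orthogonal splitting and must be verified root-pair by root-pair. The delicate case is when $\theta$ interchanges a root $\alpha \in \Delta^+_{\opposite{J}}$ with $\theta^*\alpha \in \Delta^+ \setminus \Delta_{\opposite{J}}$; here one must check that the one-dimensional $\theta$-fixed subspace of $\lie{g}_\alpha \oplus \lie{g}_{\theta^*\alpha}$ together with $\lie{g}_\alpha$ spans the whole two-dimensional root-pair space, while $(\lie{g}_\alpha \oplus \lie{g}_{\theta^*\alpha}) \cap \lie{u}_{P_{\opposite{J}}} \cap \lie{g}^{-\theta}$ vanishes because a $\theta$-antiinvariant vector whose image under $\theta$ lies in a different root space than itself must be zero. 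A secondary subtlety is securing the cocharacter in $X_*(T_K)$ with strictly positive weights on all of $\lie{u}_B$, which fails for an arbitrary $T$-regular cocharacter and motivates the symmetrization trick above.
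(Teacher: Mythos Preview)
Your approach is essentially the paper's: apply Lemma~\ref{proposition:linearinzation.of.double.cosets} with $U=U_B$, $U_1=U_B\cap L_{\opposite{J}}$, $U_2=U_{B_K}$, $\lie{V}=\lie{u}_{P_{\opposite{J}}}\cap\lie{g}^{-\theta}$, and then pass to $T_K$-orbits. One caveat on part~(1): Steinberg's connectedness theorem (as cited in \S\ref{subsection:notation}) requires the ambient group to be simply connected semisimple, and a standard Levi $L_{\opposite{J}\cap\theta(\opposite{J})}$ is in general neither. The paper sidesteps this by observing that $P_{\opposite{J}\cap\theta(\opposite{J})}$ is a $\theta$-stable standard parabolic, so $P_{\opposite{J}\cap\theta(\opposite{J})}\cap K$ is a parabolic subgroup of the connected group $K$ and $L_{\opposite{J}\cap\theta(\opposite{J})}\cap K$ is its Levi component, hence automatically connected and reductive. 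This is a one-line fix to your argument.

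For the direct-sum decomposition in~(2), the paper replaces your root-orbit case analysis with the $\theta$-stable splitting
\[
\lie{u}_B=\bigl(\lie{u}_B\cap(\lie{l}_{\opposite{J}}+\lie{l}_{\theta(\opposite{J})})\bigr)\oplus\bigl(\lie{u}_{P_{\opposite{J}}}\cap\lie{u}_{P_{\theta(\opposite{J})}}\bigr),
\]
after which one takes the $\pm\theta$-eigenspace decomposition of the second summand and notes that $\lie{u}_1+\lie{u}_2$ equals the first summand plus the $\theta$-fixed part of the second. This absorbs your ``delicate case'' (a $\theta$-orbit straddling $\Delta^+_{\opposite{J}}$ and its complement) into a single algebraic identity, with no per-orbit inspection needed. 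Your symmetrized-cocharacter construction of $T^1$ is more explicit than the paper's one-line remark that $T_K$ contains a $G$-regular element, but both are valid.
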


\begin{proof}
(1)\ 
It follows that 
$
L_{{\opposite{J}}} \cap K = L_{{\opposite{J}}} \cap L_{\theta({\opposite{J}})} \cap K = 
L_{{\opposite{J}} \cap \theta({\opposite{J}})} \cap K 
$.  
Then $ L_{{\opposite{J}} \cap \theta({\opposite{J}})} \cap K$
 is a Levi component
 of the parabolic subgroup
 $P_{{\opposite{J}} \cap \theta({\opposite{J}})} \cap K$
 of $K$. Hence it is connected and reductive.  

We have 
$
B_K \cap L_{\opposite{J}} = (B \cap L_{{\opposite{J}} \cap \theta({\opposite{J}})}) \cap K 
$.  
Since 
$ B \cap L_{{\opposite{J}} \cap \theta({\opposite{J}})} $ is a $ \theta $-stable Borel subgroup of $ L_{{\opposite{J}} \cap \theta({\opposite{J}})} $,  
 it cuts out a Borel subgroup of 
$ (L_{{\opposite{J}} \cap \theta({\opposite{J}})})^{\theta} = L_{{\opposite{J}} \cap \theta({\opposite{J}})} \cap K = L_{\opposite{J}} \cap K $.

(2)\ 
Since $ L_{{\opposite{J}}}$ normalizes $\lie{u}_{P_{\opposite{J}}}$
 and $K$ normalizes $\lie{g}^{-\theta}$,
 the intersection $L_{\opposite{J}}\cap K$ acts on
 $\lie{u}_{P_{\opposite{J}}} \cap \lie{g}^{-\theta}$.
We use Lemma~\ref{proposition:linearinzation.of.double.cosets}  
 by taking 
$ U = U_B$, $U_1 = U_B \cap L_{\opposite{J}} $ and $ U_2 = U_{B_K} $.  
Since $ T_K $ contains a regular element of $ G $, we can take a subtorus 
$ T^1 = \bbC^{\times} $ in $ T_K $ so that the weights in $ \lie{u} = \lie{u}_B $ are all positive.
Then 
$ \lie{u}_B \cap \lie{l}_{\opposite{J}} $ and $ \lie{u}_{B_K} = \lie{u}_B^{\theta} $ are stable under $ T^1 $.  
We have
\begin{align*}
&\lie{u}_1 = \lie{u}_B \cap \lie{l}_{\opposite{J}} , 
\qquad
\lie{u}_2 = \lie{u}_{B_K} = \lie{u}_B^{\theta} , \\
&\lie{u}_1 \cap \lie{u}_2 
= (\lie{u}_B \cap \lie{l}_{\opposite{J}}) \cap \lie{u}_{B_K} 
 = \lie{u}_B^{\theta} \cap \lie{l}_{\opposite{J}}
 = \lie{u}_B^{\theta} \cap (\lie{l}_{\opposite{J}} \cap \lie{l}_{\theta(\opposite{J})}) ,
\\
&\lie{u}_1 + \lie{u}_2 
 = (\lie{u}_B \cap \lie{l}_{\opposite{J}}) + \lie{u}_{B}^{\theta} 
 = (\lie{u}_B \cap (\lie{l}_{\opposite{J}} + \lie{l}_{\theta(\opposite{J})})) + \lie{u}_{B}^{\theta} ,
\\
&\lie{u}_{P_{\opposite{J}}} \cap \lie{g}^{-\theta} = \lie{u}_{P_{\opposite{J}}} \cap \lie{u}_{P_{\theta(\opposite{J})}} \cap \lie{g}^{-\theta},
\\
&\lie{u} = \lie{u}_B 
 = (\lie{u}_B \cap (\lie{l}_{\opposite{J}} + \lie{l}_{\theta(\opposite{J})}))
  \oplus  (\lie{u}_{P_{\opposite{J}}} \cap \lie{u}_{P_{\theta(\opposite{J})}})
 = (\lie{u}_1 + \lie{u}_2) \oplus (\lie{u}_{P_{\opposite{J}}} \cap \lie{g}^{-\theta}).  
\end{align*}
Here $\lie{u}_{P_{\opposite{J}}}$ is the nilradical of
 $\lie{p}_{\opposite{J}}=\Lie P_{\opposite{J}}$. 
Since $ U_1 \cap U_2 = U_{B_K} \cap L_{\opposite{J}} $ is contained in $ L_{\opposite{J}} \cap K$, 
it acts on $ \lie{u}_{P_{\opposite{J}}} \cap \lie{g}^{-\theta} $ via the adjoint action.  
Now we take 
$ \lie{V} = \lie{u}_{P_{\opposite{J}}} \cap \lie{g}^{-\theta}  $ in Lemma~\ref{proposition:linearinzation.of.double.cosets} and conclude that the exponential map induces
 a bijective map
\begin{equation*}
\xymatrix @1 @M=5pt {
(\lie{u}_{P_{\opposite{J}}} \cap \lie{g}^{-\theta})
 / (U_{B_K} \cap L_{\opposite{J}}) \ar[r]^-{\sim} & 
(U_B \cap L_{\opposite{J}}) \bsl U_B / U_{B_K}.
}
\end{equation*}
The torus $ T_K $ acts on the both hand sides by the adjoint (or conjugation) action and we see that 
\begin{equation*}
\xymatrix @1 @M=5pt {
(\lie{u}_{P_{\opposite{J}}} \cap \lie{g}^{-\theta})
 / (B_K \cap L_{\opposite{J}}) \ar[r]^-{\sim} & 
\bigl( (U_B \cap L_{\opposite{J}}) \bsl U_B / U_{B_K} \bigr) \bigm/ T_K. 
}
\end{equation*}
\end{proof}

\section{Spherical actions}
\label{section:spherical.actions}

From the next section, we will give a classification of 
double flag varieties $ \GKFl{P}{Q} $ of finite type when $ P = B $ is a Borel subgroup of $ G $ or 
$ Q = B_K $ is a Borel subgroup of $ K $.  
For this purpose, in this section, we summarize some known properties and 
consequences of our results in \S\S~\ref{section:preliminaries}--\ref{section:reduction.to.linear}.

\subsection{Spherical action of a symmetric subgroup on a partial flag variety}
\label{subsection:Q.is.Borel}

Let us consider a double flag variety $ \GKFl{P}{B_K} $, where $ B_K $ is a Borel subgroup of $ K $.  
Since $ \GKFl{P}{B_K} / K \simeq B_K \backslash G / P $, we get several equivalent conditions for finiteness of orbits:
\\[2ex]
\hfil
\begin{tabular}{c@{}l}
$ \GKFl{P}{B_K} $ {\ } & is of finite type \\
$ \iff $ &
$ \GFl_P = G/P $ has finitely many $ B_K $-orbits 
\\
$ \iff $ &
$ \GFl_P $ is $ K $-spherical, i.e., 
there exists an open dense $ B_K $-orbit on $ \GFl_P $
\\
$ \iff $ &
there exists an open dense $ K $-orbit on $ \GKFl{P}{B_K} $.
\end{tabular}
\hfil
\\[2ex]
In this situation, we can apply the following theorem by Panyushev.

\begin{theorem}[{\cite{Panyushev.1999.MMath}}]
\label{thm:Panyushev.criterion}
Let $ H $ be a connected reductive group which acts on 
a smooth variety $ X $ and $ M \subset X $ a smooth locally closed $ H $-stable subvariety. 
Let $ B $ be a Borel subgroup of $ H $.  
Then generic stabilizers for the actions of $ B $ on $ X $, the normal bundle $ T_M X $, and 
the conormal bundle $ T_M^{\ast} X $ 
are isomorphic.
In particular, the following are equivalent:
 $X$ is $H$-spherical;
 $T_M X $ is $H$-spherical;
 $T_M^{\ast} X $ is $H$-spherical.
\end{theorem}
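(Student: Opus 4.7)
The plan is to establish the isomorphism of generic $B$-stabilizers through two separate comparisons, $X$ versus $T_M X$ and $T_M X$ versus $T_M^{\ast} X$, and then derive the sphericity equivalences from the standard fact that a connected reductive $H$ acts spherically on an irreducible variety $Y$ if and only if the generic $B$-stabilizer has dimension $\dim B - \dim Y$, together with the equality $\dim T_M X = \dim T_M^{\ast} X = \dim X$.

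For the comparison of $X$ with $T_M X$, I would use the deformation to the normal cone. Blow up $X \times \mathbb{A}^1$ along $M \times \{0\}$ and remove the strict transform of $X \times \{0\}$; the result is an $H \times \mathbb{G}_m$-equivariant flat family $\widetilde{X} \to \mathbb{A}^1$ whose fiber over $t \neq 0$ is $X$ and whose fiber over $0$ is $T_M X$. Passing to the $B$-action and using upper semicontinuity of stabilizer dimension in flat families, together with the contraction provided by the auxiliary $\mathbb{G}_m$-factor, the generic $B$-stabilizer in the generic fiber and in the special fiber are carried into each other by a canonical path, so they are conjugate inside $B$, hence isomorphic.

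For the comparison $T_M X \leftrightarrow T_M^{\ast} X$, the natural framework is the symplectic bundle $E := T_M X \oplus T_M^{\ast} X$ over $M$, which can be identified with $T^{\ast}(T_M X)$ restricted over the zero section $M$. Both summands embed as $H$-stable Lagrangian subbundles with respect to the canonical $H$-invariant symplectic form, and the pairing $T_M X \times_M T_M^{\ast} X \to \mathbb{A}^1$ is $H$-equivariant. Fiberwise over a generic $m \in M$ in the open $B$-orbit (with stabilizer $B_m$), the $B_m$-representations on $(T_M X)_m$ and $(T_M^{\ast} X)_m$ are dual, and the equivariant symplectic doubling forces their generic $B_m$-stabilizers to be isomorphic; globalizing over the generic $B$-orbit in $M$ then yields the statement for the bundles.

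The hard part will be the normal--conormal step. For a general reductive group $F$ acting linearly on a vector space $V$, the generic $F$-stabilizers on $V$ and $V^{\ast}$ need \emph{not} be isomorphic, so the proof cannot proceed purely fiberwise by naive duality. What saves the argument is that the linear action in question is the restriction of an $H$-action (with $H$ reductive) coming from the ambient symplectic $H$-module $E$; the canonical pairing and the moment-map structure of $T^{\ast}(T_M X)$ then detect both sides simultaneously. Once the isomorphism of generic $B$-stabilizers is established, the three equivalences \emph{$X$ is $H$-spherical}, \emph{$T_M X$ is $H$-spherical}, \emph{$T_M^{\ast} X$ is $H$-spherical} follow immediately from the dimension criterion recalled in the first paragraph.
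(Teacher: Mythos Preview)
The paper does not give a proof of this theorem at all: it is quoted verbatim from Panyushev's article \cite{Panyushev.1999.MMath} and used as a black box to deduce Theorem~\ref{thm:finiteness.iff.MFaction}. So there is no ``paper's own proof'' to compare against; any assessment has to be of your argument on its merits and against Panyushev's original.

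Your Step~1 is essentially Panyushev's approach for the comparison $X \leftrightarrow T_M X$: the deformation to the normal cone gives a $\mathbb{G}_m$-equivariant flat family degenerating $X$ to $T_M X$, and one shows that complexity (equivalently, the generic $B$-stabilizer) is constant in such a family. Your sketch is a bit loose on why semicontinuity gives \emph{both} inequalities --- upper semicontinuity of $\dim B_x$ alone only yields one direction --- but the $\mathbb{G}_m$-contraction does supply the missing inequality once formulated carefully, so this step is recoverable.

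Step~2 is where there is a genuine gap. You correctly observe that for a general linear action of a group $F$ on $V$, the generic stabilizers on $V$ and $V^{\ast}$ need not be isomorphic, so fiberwise duality over a generic $m \in M$ cannot be the whole story. But the remedy you offer --- that ``the canonical pairing and the moment-map structure of $T^{\ast}(T_M X)$ detect both sides simultaneously'' --- is an assertion, not an argument. Nothing you wrote explains \emph{how} the symplectic structure on $E = T_M X \oplus T_M^{\ast} X$ forces the generic $B_m$-stabilizers on the two Lagrangian summands to coincide. Panyushev's actual route is different: he relates the conormal bundle $T_M^{\ast} X$ to the cotangent bundle $T^{\ast}X$ (in which it sits as a Lagrangian subvariety) and invokes the general principle that for any smooth $H$-variety $Y$, the generic $B$-stabilizers on $Y$ and on $T^{\ast}Y$ are isomorphic (via the moment map). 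That principle, applied once to $X$ and once to the normal bundle, closes the loop; your symplectic-doubling picture does not obviously reduce to it. If you want to complete your version, you need either to supply that missing lemma or to explain concretely why the $H$-equivariant pairing constrains the $B$-orbit structure on each Lagrangian half, not just on $E$ as a whole.
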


We now get a criterion for the finiteness of the double flag variety
 $ G/P \times K/B_K $.

\begin{theorem}
\label{thm:finiteness.iff.MFaction}
Let $ B_K $ be a Borel subgroup of $ K $ and $ P = P_J $ a parabolic subgroup of $ G $.  
Then the following are all equivalent.  
\begin{thmenumerate}
\item
The double flag variety $ \GKFl{P}{B_K} = G/P \times K/B_K $ is of finite type.
\item
$ \GFl_P = G/P $ is $K$-spherical.
\item
For some $K$-orbit $\calorbit\subset \GFl_P$ 
the normal bundle 
$T_{\calorbit} \GFl_P$ is $K$-spherical.  
(Hence so is for any $ K $-orbit.)
\item
For some $K$-orbit $\calorbit\subset \GFl_P$ 
the conormal bundle 
$ T_{\calorbit}^{\ast} \GFl_P$ is $K$-spherical.
(Hence so is for any $ K $-orbit.)
\item
\label{item:finiteness.iff.MFaction.BK}
The adjoint action of $L\cap K$ on
 $\lie{u}_{P} \cap \lie{g}^{-\theta}$ is spherical.
\end{thmenumerate}
\end{theorem}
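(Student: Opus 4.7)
The plan breaks into three parts.

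For $(1)\Leftrightarrow(2)$, I would use the bijection $K\bsl(\GKFl{P}{B_K})\simeq B_K\bsl G/P$ recorded just before the theorem: finite type is equivalent to finitely many $B_K$-orbits on $\GFl_P$, which by the Brion--Vinberg criterion recalled in \S~\ref{subsection:spherical} is precisely the $K$-sphericity of $\GFl_P$. For $(2)\Leftrightarrow(3)\Leftrightarrow(4)$, I would invoke Panyushev's theorem (Theorem~\ref{thm:Panyushev.criterion}) with $H=K$, $X=\GFl_P$, and $M=\calorbit$ any $K$-orbit; the three sphericity statements are packaged together by that result.

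The main work is $(2)\Leftrightarrow(5)$, which I would route through $(4)$ by taking $\calorbit=K/(K\cap P)$, the closed $K$-orbit through the base point; it is closed because $K\cap P\supset B_K$ is parabolic in $K$. The conormal bundle is then the $K$-homogeneous bundle $K\times_{K\cap P}N^{\ast}$, where $N^{\ast}$ is the conormal space at the base point. Under the Killing form identification of $T^{\ast}\GFl_P$ at the base point with $\lie{u}_P$, the space $N^{\ast}$ is the annihilator of the tangent $(\lie{k}+\lie{p})/\lie{p}\subset \lie{g}/\lie{p}$; since the Killing form is $\theta$-invariant and $\lie{k}=\lie{g}^{\theta}$, this annihilator equals $\lie{u}_P\cap\lie{g}^{-\theta}$. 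Next, the isotropy representation of $P$ on $\lie{g}/\lie{p}$ factors through $L=P/U_P$, so the unipotent radical $K\cap U_P$ of $K\cap P$ acts trivially on the fiber, and the $(K\cap P)$-action factors through $L\cap K$. The standard principle that a $K$-homogeneous bundle $K\times_{K\cap P}V$ is $K$-spherical iff $V$ is Levi-spherical then yields $(4)\Leftrightarrow(5)$.

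The main obstacle is the Levi identification when $P=P_J$ is not $\theta$-stable: one must verify that $L\cap K$ genuinely serves as a Levi of $K\cap P$, which requires the structure theory of (non-$\theta$-stable) parabolics in symmetric pairs. As a cross-check or alternative route, one can use Proposition~\ref{proposition:L_JcapK.action.on.Lie.algebra.of.V} directly: the stratum indexed by $\tilde{w}_0$ in Proposition~\ref{proposition:parametrization.of.K.orbits.when.Q.is.Borel} is the open piece of $\GKFl{P}{B_K}$, so $K$-sphericity of $\GFl_P$ is equivalent to sphericity of $L_{\opposite{J}}\cap K$ on $\lie{u}_{P_{\opposite{J}}}\cap\lie{g}^{-\theta}$, which matches (5) after the substitution $J\leftrightarrow\opposite{J}$.
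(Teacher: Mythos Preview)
Your plan for $(1)\Leftrightarrow(2)$ and $(2)\Leftrightarrow(3)\Leftrightarrow(4)$ matches the paper exactly.

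For $(4)\Leftrightarrow(5)$ you also start as the paper does---take the closed orbit $\calorbit=K\cdot eP\simeq K/(P\cap K)$ and identify the conormal fiber with $\lie{u}_P\cap\lie{g}^{-\theta}$---but your reduction to $L\cap K$ contains a genuine error. The assertion that ``the isotropy representation of $P$ on $\lie{g}/\lie{p}$ factors through $L=P/U_P$'' is false in general: for instance in $G=\SL_4$ with $J=\{\alpha_1\}$, the element $\exp E_{\alpha_2}\in U_P$ sends $E_{-\alpha_2-\alpha_3}+\lie{p}$ to $E_{-\alpha_2-\alpha_3}+cE_{-\alpha_3}+\lie{p}$ with $c\neq 0$, and $-\alpha_3$ is not a root of $\lie{p}$. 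Dually, $U_P$ does not act trivially on $\lie{u}_P$ (or on $\lie{u}_P\cap\lie{g}^{-\theta}$) unless $\lie{u}_P$ is abelian. So the $(K\cap P)$-action on the fiber need not factor through $L\cap K$, and your ``standard principle'' does not apply as stated.

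The paper sidesteps this entirely. Instead of factoring the full stabilizer action, it takes the opposite Borel $B_K^-$ and uses the open cell $B_K^-(P\cap K)\subset K$ to reduce $K$-sphericity of $K\times^{P\cap K}(\lie{u}_P\cap\lie{g}^{-\theta})$ to the existence of an open $(P\cap B_K^-)$-orbit on the fiber. The key computation is then purely at the group level: $P\cap B_K^-\subset P\cap B^-=L\cap B^-$, so $P\cap B_K^-=L\cap B_K^-$, and (by the argument of Proposition~\ref{proposition:L_JcapK.action.on.Lie.algebra.of.V}(1), which works for any $J$) this is a Borel subgroup of $L\cap K$. No claim about $K\cap U_P$ acting trivially is needed or made.

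Your alternative route through Proposition~\ref{proposition:L_JcapK.action.on.Lie.algebra.of.V} is correct and is exactly what the paper records in the Remark following the theorem; note it gives condition~(5) for $\opposite{J}$ rather than $J$, and one then passes back to $J$ via a Weyl involution or via the equivalence $(3)\Leftrightarrow(4)$.
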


\begin{proof}
We have already seen the equivalence between (1) and (2).  
The equivalence of (2), (3) and (4) follows from 
Theorem~\ref{thm:Panyushev.criterion}.  

To see that (5) is equivalent to others, 
let us consider the $ K $-orbit $ \calorbit $ through the base point $ P $.  
Then $\calorbit\simeq K/(P\cap K)$ and the fiber of
 the conormal bundle $ T_{\calorbit}^{\ast} \GFl_P $ at $P$ is isomorphic to 
 $(\lie{g}/(\lie{p}+\lie{k}))^*\simeq \lie{u}_{P} \cap \lie{g}^{-\theta}$.
Hence $  T_{\calorbit}^{\ast} \GFl_P \simeq K \times^{P \cap K} ( \lie{u}_{P} \cap \lie{g}^{-\theta} )$.
Let us take the opposite Borel subgroup $ B_K^- $ of $ B_K $.  
Since $ B_K^- (P \cap K) $ is open in $ K $, 
 the conormal bundle has an open $ B_K^- $-orbit 
 if and only if there is an open 
 $(P \cap B_K^-)$-orbit in the fiber 
$ \lie{u}_{P} \cap \lie{g}^{-\theta} $.  
Notice that $ P \cap B_K^- = L \cap B_K^- $ is 
a Borel subgroup of $ L \cap K$.  
Thus (5) is equivalent to (4).
\end{proof}

\begin{remark}
Note that Condition (5) is equivalent to 
requiring the orbit space \eqref{eq:fiber.is.spherical} in Proposition~\ref{proposition:L_JcapK.action.on.Lie.algebra.of.V} 
to be finite  
for $ \opposite{J} $ instead of $ J $.  
In fact, the conditions for $ \opposite{J} $ and $ J $ are equivalent in view of 
the equivalence of Conditions (3) and (4) above.
This can be also seen directly by taking a Weyl involution of $G$
 which sends $P_J$ to $P_{\opposite{J}}$ and stabilizes $K$.
\end{remark}

\subsection{Spherical fiber bundle over symmetric space}

In this subsection, we consider the case where $ P = B $ is a Borel subgroup  
 so $ J = \emptyset $ in our previous notation and 
 the double flag variety is $\GKFl{B}{Q}=G/B\times K/Q$. 
Recall that we have assumed that $ B $ is $ \theta $-stable.  

We summarize the notation here, which is adapted to the present situation.  
\begin{align*}
&\jwjprime = \jwj{}{J'} \ni w ,
\\
&\wPcap{L'}{w} = w^{-1} B w \cap L' \text{ : a Borel subgroup of $ L' $,} 
\\
&\kgbw{w} = (\wPcap{L'}{w}) \bsl L' / L'_K = (w^{-1} B w \cap L') \bsl L' / L'_K 
\ni v \text{ : a representative in $ L' $,}
\\
&\wPcap{L'_K}{w v} = (L' \cap (w v)^{-1} B w v) \cap K, 
\quad
\text{ 
\begin{minipage}[t]{.5\textwidth}
where $ L' \cap (w v)^{-1} B w v $
 runs over all the Borel subgroups in $ L' $ 
up to $ L'_K $-conjugacy,
\end{minipage}
}
\\
&\wPcap{U'}{w v} = (w v)^{-1} B w v \cap U' 
= v^{-1} ( w^{-1} B w \cap U') v 
= v^{-1} (U' \cap w^{-1} U_B w) v .
\end{align*}
Thus we have 
\begin{multline*}
\Bigl( (\wPcap{U'}{w v}) \bsl U' / U'_K \Bigr)
 \Bigm/ \bigl(\wPcap{L'_K}{w v}\bigr)
\\
\simeq 
\Bigl( (U' \cap (wv)^{-1} U_B wv) \bsl U' / U'_K \Bigr)
 \Bigm/  \bigl(L' \cap (wv)^{-1} B wv \cap K \bigr),
\end{multline*}
where 
$ U'_{K} := U' \cap K $.
So Theorem~\ref{main.thm:classify.K.orbits.on.double.flag.variety} 
\eqref{main.thm:item:classification.via.unipotent.double.cosets} becomes
\begin{equation*}
K \bsl (\GKFl{B}{Q}) \simeq \coprod_{w \in \jwj{}{J'}} 
\coprod_{v\in \kgbw{w}} 
\Bigl( (U' \cap (wv)^{-1} U_B wv) \bsl U' / U'_K \Bigr)
 \Bigm/  \bigl(L' \cap (wv)^{-1} B wv \cap K \bigr) .
\end{equation*}

Since 
$ \GKFl{B}{Q} $ is of finite type if and only if 
$ G/Q $ is $ G $-spherical, 
we can concentrate on the existence of an open $ B $-orbit in $G/Q$.  
Take the longest element $ w_0 \in W$ so that 
\begin{equation*}
U' \cap w_0^{-1} U_B w_0 = \{ e \} 
\quad \text{ and } \quad 
L' \cap w_0^{-1} B w_0 = L' \cap B^- ,
\end{equation*}
where $ B^- $ denotes the opposite Borel subgroup
 corresponding to $ \Delta^- = - \Delta^+ $.  
Therefore, we get 
\begin{align*}
&(U' \cap (w_0 v)^{-1} U_B w_0 v) \bsl U' / U'_K = U' / U_K' 
\intertext{and }
&L' \cap (w_0 v)^{-1} B w_0 v \cap K = v^{-1} (L' \cap B^-)v \cap K .
\end{align*}
Since 
$ \kgbw{w_0} = (L' \cap B^-) \bsl L' / L'_K $ and 
$ (L' \cap B^-) \bsl L' $ is the full flag variety of $ L' $, 
 the subgroup $ v^{-1} (L' \cap B^-)v \; (v \in \kgbw{w_0}) $ runs over all
 the Borel subgroups of $ L' $ up to $ L'_K $-conjugacy.
However, for the existence of an open orbit, it is enough to consider 
a Borel subgroup $ v^{-1} (L' \cap B^-) v $ of $ L' $ such that
$ v^{-1} (L' \cap B^-) v L'_K$ is open in $ L' $.  
Let us denote such a Borel subgroup by 
$ \openBL = v^{-1} (L' \cap B^-) v $.  
Then we conclude that $\GKFl{B}{Q}$ is of finite type if and only if
 $ U'/U'_K $ has an open
 $ \openBL $-orbit.

\medskip

To describe such Borel subgroups, 
let us briefly recall some general facts about the double coset space $ B \bsl G / K $ 
and its relation to minimal $ \theta $-split parabolic subgroups.  
For these facts, we refer the readers to \cite[\S~1]{Vust.1974}.  
Also references \cite[\S~2]{Springer.1986} and \cite{Helminck.Wang.1993} will be useful.  

\smallskip

A \psg $ P $ of $ G $ is called $ \theta $-split if 
$ P $ and $ \theta(P) $ are opposite to each other.  
In other words, the intersection $ P \cap \theta(P) $ is a Levi component of $ P $, 
which is a $ \theta $-stable reductive subgroup of the same rank as $ G $.  
It is known that there exists uniquely a minimal $\theta$-split
 parabolic subgroup of $G$ up to $K$-conjugacy (\cite[Proposition~5]{Vust.1974}).

Let $ \Pmin $ be a minimal $ \theta $-split \psg of $ G $.  
Then $ \Pmin $ has a $ \theta $-stable Levi subgroup which contains 
a $ \theta $-stable maximal torus $ H $.  
Put $A := \exp \lie{h}^{-\theta} $ (maximal $ \theta $-split torus) and
 $M := Z_K(A)$, the centralizer of $A$ in $K$.
Then it follows that $\Pmin\cap K=M$ and
 $\Pmin=MAN$, where $N$ is the unipotent radical of $\Pmin$ (\cite[Proposition~2]{Vust.1974}). 

\begin{lemma}
\label{lemma:open.orbit.on.flag.variety}
\begin{thmenumerate}
\item
For $ v \in G $,  
 the set $ B v K $ is open in $ G $ if and only if
 there exists a minimal $\theta$-split parabolic subgroup $\Pmin$
 that contains  $ v^{-1} B v $.
\item
If $v^{-1} B v \subset \Pmin$ for $ v \in G $,
 then $v^{-1} B v\cap K=v^{-1} B v\cap M$
 and the identity component of $v^{-1} B v\cap M$ 
 is a Borel subgroup of the identity component of $M$.
\end{thmenumerate}
\end{lemma}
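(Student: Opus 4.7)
The plan is to convert the openness of $BvK$ into the infinitesimal condition $\Ad(v^{-1})\lie{b} + \lie{k} = \lie{g}$ and then to exploit the root-space structure of a minimal $\theta$-split parabolic $\Pmin = MAN$.

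For the $(\Leftarrow)$ direction of (1), assume $v^{-1}Bv \subset \Pmin$. Because $\Pmin$ is $\theta$-split, $\theta(\lie{n})$ is the opposite nilradical and one has the direct sum decomposition $\lie{g} = \lie{m} \oplus \lie{a} \oplus \lie{n} \oplus \theta(\lie{n})$. The Borel $v^{-1}Bv$ of $G$ contained in $\Pmin$ is necessarily of the form $B_0 \cdot N$ with $B_0$ a Borel of the Levi $L = Z_G(A) = MA$; since $\lie{a} \subset Z(\lie{l})$, every Borel of $\lie{l}$ contains $\lie{a}$, so $\Ad(v^{-1})\lie{b} \supset \lie{a} + \lie{n}$. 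On the other hand $\lie{k}$ contains $\lie{m}$ together with every vector $X + \theta(X)$ for $X \in \lie{n}$. Collecting these, $\Ad(v^{-1})\lie{b} + \lie{k}$ contains $\lie{a}$, $\lie{n}$, $\lie{m}$ and, by subtracting $X$ from $X + \theta(X)$, also $\theta(\lie{n})$; hence the sum equals $\lie{g}$, so $BvK$ is open.

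The $(\Rightarrow)$ direction is the main obstacle, because one must manufacture the parabolic $\Pmin$ from the openness alone. I plan to follow the strategy of Vust~\cite{Vust.1974} (see also \cite{Springer.1986, Helminck.Wang.1993}): the openness of $BvK$ forces $v^{-1}Bv$ and $\theta(v^{-1}Bv) = \theta(v)^{-1} B \theta(v)$ into sufficiently generic relative position that their intersection contains a $\theta$-stable maximal torus $T'$ of $G$ (obtained by a Steinberg-type fixed-point argument applied to the $\theta$-stable solvable intersection). Let $A$ be the identity component of the $(-\theta)$-part of $T'$; it is a $\theta$-split torus, and the equality $\Ad(v^{-1})\lie{b} + \lie{k} = \lie{g}$, translated into root data relative to $T'$, forces $A$ to be maximal $\theta$-split. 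Then $L := Z_G(A)$ is a $\theta$-stable Levi, and the parabolic $\Pmin$ built from $L$ plus the span of those $A$-root spaces that already appear in $\Ad(v^{-1})\lie{b}$ is $\theta$-split (by maximality of $A$), minimal, and contains $v^{-1}Bv$ by construction.

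For (2), the identity $\Pmin \cap K = M$ makes $v^{-1}Bv \cap K \subset M$ automatic, hence $v^{-1}Bv \cap K = v^{-1}Bv \cap M$. Writing $v^{-1}Bv = B_0 \cdot N$ and using $N \cap L = \{e\}$ gives $v^{-1}Bv \cap M = B_0 \cap M$. The key point in showing that $(B_0 \cap M)^0$ is a Borel of $M^0$ is that $B_0$ itself is $\theta$-stable: again by a Steinberg-type argument, the $\theta$-stable solvable subgroup $B_0 \cap \theta(B_0)$ contains a $\theta$-stable maximal torus $T_L$ of $L$; since $A$ is central in $L$ and is maximal $\theta$-split, $T_L = T_L^\theta \cdot A$ and every root of $(\lie{l}, T_L)$ vanishes on $\lie{a}$, so $\theta$ acts trivially on the root system of $L$ and preserves the positive system determined by $B_0$, forcing $\theta(B_0) = B_0$. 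Steinberg's theorem applied to the $\theta$-stable Borel $B_0$ of the connected reductive group $L$ then yields that $(B_0)^\theta = B_0 \cap M$ has identity component a Borel subgroup of $M^0$.
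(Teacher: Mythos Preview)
Your proof is correct. For part (1) the paper simply cites Vust~\cite{Vust.1974} (the remark after the Corollary to Theorem~1), so your explicit infinitesimal argument for $(\Leftarrow)$ and your sketch of the $(\Rightarrow)$ direction along Vust's lines are a welcome expansion of that citation and are in the same spirit.

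For part (2) the routes diverge. The first identity $v^{-1}Bv\cap K=v^{-1}Bv\cap M$ is handled essentially the same way (via $\Pmin\cap K=M$). For the Borel assertion, however, the paper takes a short, purely Lie-algebraic path that avoids any discussion of whether $B_0$ is $\theta$-stable: since $\Ad(v^{-1})\lie{b}$ is a Borel subalgebra of $\lie{p}_{\min}$ it contains the nilradical $\lie{n}$, so $\Ad(v^{-1})\lie{b}\cap(\lie{m}+\lie{a})$ is a Borel of $\lie{m}+\lie{a}$; and since $\lie{a}$ lies in the center of $\lie{m}+\lie{a}$, one has $\Ad(v^{-1})\lie{b}\cap(\lie{m}+\lie{a})=(\Ad(v^{-1})\lie{b}\cap\lie{m})+\lie{a}$ with $\Ad(v^{-1})\lie{b}\cap\lie{m}$ a Borel of $\lie{m}$. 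Your approach instead first proves $B_0$ is $\theta$-stable (using that maximality of $A$ forces the $(-\theta)$-part of any $\theta$-stable maximal torus of $L$ to be exactly $\lie{a}$, so that $\theta$ fixes every root of $L$) and then appeals to the standard fact that a $\theta$-stable Borel cuts out a Borel of the fixed-point group. This is valid, and the $\theta$-stability of $B_0$ is an interesting byproduct, but it is more work than needed: the paper's argument reaches the same conclusion by projecting modulo $\lie{a}+\lie{n}$ and never touches $\theta$-stability of $B_0$ at all.
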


\begin{proof}
(1) This follows from the remark after Corollary to Theorem 1 in 
\cite{Vust.1974}.  

(2) The equation $v^{-1} B v\cap K=v^{-1} B v\cap M$ follows from
\begin{align*}
v^{-1} B v\cap K&=v^{-1} B v\cap \theta(v^{-1} B v) \cap K
\subset \Pmin \cap \theta(\Pmin) \cap K
=MA\cap K=M.
\end{align*}
For the last assertion, we work on the Lie algebra level.
Since $\adj(v^{-1})\lie{b}$ is a Borel subalgebra
 (i.e.\ a maximal solvable subalgebra) of $\lie{p}_{\text{min}}$, 
 it contains the nilradical $\lie{n}$ of $\lie{p}_{\text{min}}$. 
Hence we have
 $\adj(v^{-1})\lie{b}
 =(\adj(v^{-1})\lie{b}\cap (\lie{m}+\lie{a}))\oplus \lie{n}$ and 
 $\adj(v^{-1})\lie{b}\cap (\lie{m}+\lie{a})$
 is a Borel subalgebra of $\lie{m}+\lie{a}$.
Moreover since $\lie{a}$ is contained in the center of $\lie{m}+\lie{a}$,
 we have $\adj(v^{-1})\lie{b}=
 (\adj(v^{-1})\lie{b}\cap \lie{m})+\lie{a}+\lie{n}$
 and $\adj(v^{-1})\lie{b}\cap \lie{m}$ is a Borel subalgebra
 of $\lie{m}$.
\end{proof}

Now we return to the situation in the former paragraph.  
Since $\lie{u'}$ is $\theta$-stable,
 we have a decomposition
 $\lie{u'}=\lie{u}'_K\oplus (\lie{u'})^{-\theta}$.
Therefore, we get an $L'_K$-equivariant isomorphism 
$ (\lie{u'})^{-\theta}\simeq U'/ U'_K$ 
via the exponential map.

\begin{theorem}
\label{theorem:case.P=B}
Let $ P' $ be a $ \theta $-stable \psg of $ G $
 such that $ P'\cap K = Q $.  
We denote by $ P' = L' U' $ the standard Levi decomposition.  
Let $\Pmin'$ be a minimal $\theta$-split parabolic subgroup
 of $L'$ and put $M':=\Pmin' \cap K$.
Then the following three conditions are all equivalent.

\begin{thmenumerate}
\item
The double flag variety 
$ \GKFl{B}{Q} = G/B \times K/Q $ is of finite type.
\item
The homogeneous space $ G/Q $ is $ G $-spherical.
\item
The adjoint action of the identity component $M'_0$ of $M'$ on
$ (\lie{u'})^{-\theta} $ is spherical.
\end{thmenumerate}
\end{theorem}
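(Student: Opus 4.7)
The plan is to verify $(1)\Leftrightarrow(2)$ formally and then reduce $(2)\Leftrightarrow(3)$ to geometric statements already made available by the preceding two sections.

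For $(1)\Leftrightarrow(2)$, I would invoke the bijection $K\bsl(G/B\times K/Q)\simeq B\bsl G/Q$ from Section~\ref{subsection:reduction.to.Bruhat} together with the Brion--Vinberg criterion that a normal $G$-variety is spherical exactly when it has finitely many Borel orbits. This immediately identifies finite type of the double flag variety with $G$-sphericity of $G/Q$, and equivalently with the existence of an open $B$-orbit on $G/Q$.

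For $(2)\Leftrightarrow(3)$, I would make the open Bruhat stratum explicit using Theorem~\ref{main.thm:classify.K.orbits.on.double.flag.variety} with $P=B$. Taking the longest double coset representative $w_0\in \jwj{}{J'}$, the contribution of $w_0$ to the parametrization of $K$-orbits collapses to
\[
\coprod_{v\in\kgbw{w_0}}(U'/U'_K)\bigm/(\openBL\cap K),\qquad \openBL=v^{-1}(L'\cap B^-)v,
\]
and the open $B$-orbit on $G/Q$, if it exists, must project onto the unique open Bruhat cell in $G/P'$ and hence lives in this stratum. Thus $(2)$ is equivalent to the existence of some $v$ with $(L'\cap B^-)vL'_K$ open in $L'$ and with $\openBL\cap K$ acting on $U'/U'_K$ with an open orbit.

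To finish $(2)\Leftrightarrow(3)$, I would apply Lemma~\ref{lemma:open.orbit.on.flag.variety} to the symmetric pair $(L',L'_K)$: part~(1) forces $\openBL$ to sit inside a minimal $\theta$-split parabolic $\Pmin'$ of $L'$, and part~(2) identifies the identity component of $\openBL\cap K$ with a Borel subgroup of $M'_0$. Passing to identity components does not affect the open-orbit condition, and the $L'_K$-equivariant isomorphism $U'/U'_K\simeq(\lie{u'})^{-\theta}$ afforded by the exponential map together with the $\theta$-stable decomposition $\lie{u'}=\lie{u'}_K\oplus(\lie{u'})^{-\theta}$ converts the condition into sphericity of the $M'_0$-action on $(\lie{u'})^{-\theta}$, which is precisely $(3)$. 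The delicate point to handle carefully is the compatibility between the choice of $v$ and the choice of $M'$: here I would appeal to Vust's uniqueness up to $L'_K$-conjugacy of the minimal $\theta$-split parabolic of $L'$, which ensures that as $v$ ranges over $\kgbw{w_0}$, the Borel subgroups $\openBL$ sweep out every Borel of every such $\Pmin'$ up to $L'_K$-conjugacy, so that $(3)$ does not depend on these choices and is genuinely equivalent to $(2)$.
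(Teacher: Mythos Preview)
Your proposal is correct and follows essentially the same route as the paper: the equivalence $(1)\Leftrightarrow(2)$ is handled via the orbit bijection and the Brion--Vinberg criterion, and for $(2)\Leftrightarrow(3)$ the paper likewise specializes Theorem~\ref{main.thm:classify.K.orbits.on.double.flag.variety} at the longest element $w_0$, picks $v$ so that $\openBL\cdot L'_K$ is open in $L'$, invokes Lemma~\ref{lemma:open.orbit.on.flag.variety} to identify the identity component of $\openBL\cap K$ with a Borel of $M'_0$, and uses the exponential isomorphism $U'/U'_K\simeq(\lie{u'})^{-\theta}$. Your extra remark on the independence of the choice of $\Pmin'$ via Vust's uniqueness is a welcome clarification that the paper leaves implicit.
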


\begin{proof}
The equivalence of (1) and (2) is clear.  

Let $ \openBL $ be a Borel subgroup of $ L' $ such that
 $ \openBL \cdot L'_K $ is open in $ L' $.  
By Lemma~\ref{lemma:open.orbit.on.flag.variety}, 
we may assume that $ \openBL $ is contained in $ \Pmin' $ and 
then the connected component of 
 $ \openBL \cap K $ is a Borel subgroup of $ M'_0 $. 
Since we have already seen above that $\GKFl{B}{Q}$
 is of finite type if and only if $U'/U'_K$ has an open
 $(\openBL \cap K)$-orbit, the equivalence of (1) and (3)
 follows from the isomorphism $U'/U'_K \simeq (\lie{u}')^{-\theta}$.
\end{proof}

\begin{corollary}
\label{corollary:P=B.to.Q=B_K}
If the double flag variety $ \GKFl{B}{Q} $ is of finite type, 
then the double flag variety 
 $ \GKFl{P'}{B_K} $ is also of finite type.  
\end{corollary}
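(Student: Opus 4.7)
The plan is to reduce both finiteness assertions to the sphericity of a linear action on one and the same vector space, namely $(\lie{u}')^{-\theta} = \lie{u}_{P'} \cap \lie{g}^{-\theta}$, and then observe that the group acting under the hypothesis sits inside the group acting under the conclusion, so sphericity propagates automatically.

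Concretely, I would first apply Theorem~\ref{theorem:case.P=B} to the hypothesis: finiteness of $\GKFl{B}{Q}$ is equivalent to the identity component $M'_0$ of $M' = \Pmin' \cap K$ acting spherically on $(\lie{u}')^{-\theta}$, where $\Pmin'$ is a minimal $\theta$-split parabolic subgroup of $L'$. Next, I would apply Theorem~\ref{thm:finiteness.iff.MFaction} (condition~\eqref{item:finiteness.iff.MFaction.BK}) to the conclusion, with $P = P'$: finiteness of $\GKFl{P'}{B_K}$ is equivalent to $L' \cap K$ acting spherically on $\lie{u}_{P'} \cap \lie{g}^{-\theta}$. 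Since $\lie{u}' = \lie{u}_{P'}$ is $\theta$-stable, we have $\lie{u}_{P'} \cap \lie{g}^{-\theta} = (\lie{u}')^{-\theta}$, so both criteria concern sphericity on the very same linear space.

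The final step is a purely group-theoretic comparison. The inclusion $\Pmin' \subset L'$ gives $M'_0 \subset M' = \Pmin' \cap K \subset L' \cap K$, and I would invoke the elementary fact that if a subgroup $H_1 \subset H_2$ of a reductive group acts spherically on a variety, then so does $H_2$: any Borel subgroup of $H_1$ is a connected solvable subgroup of $H_2$, hence contained in some Borel $B_{H_2}$, whose orbits therefore contain those of the smaller Borel, so an open orbit of the smaller Borel forces an open orbit of $B_{H_2}$.

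There is essentially no obstacle here: Theorems~\ref{theorem:case.P=B} and~\ref{thm:finiteness.iff.MFaction} already perform the heavy lifting by linearizing the orbit problem, and the corollary is then a one-line comparison of acting groups on a common vector space. The only mild point of care is to note that sphericity of $M'_0$ and of $M'$ are equivalent (Borel subgroups being automatically connected), so that the monotonicity principle applies cleanly to $M' \subset L' \cap K$.
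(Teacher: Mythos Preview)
Your proposal is correct and follows essentially the same route as the paper's proof: invoke Theorem~\ref{theorem:case.P=B} to get sphericity of the $M'_0$-action on $(\lie{u}')^{-\theta}$, observe $M' \subset L'_K = L' \cap K$, and conclude via Theorem~\ref{thm:finiteness.iff.MFaction}. You have simply spelled out the monotonicity-of-sphericity step that the paper leaves implicit.
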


\begin{proof}
By Theorem~\ref{theorem:case.P=B}, 
the adjoint action of $ M'_0 $ on 
 $ (\lie{u'})^{-\theta} $ is spherical.  
Since $ M' \subset L'_K $,
we conclude that $ \GKFl{P'}{B_K} $ is of finite type by 
Theorem~\ref{thm:finiteness.iff.MFaction}.
\end{proof}

\subsection{Triple flag varieties}
\label{subsection:triple.flag.variety}

Let us take three parabolic subgroups $ P_1, P_2 $ and $ P_3 $ of $ G $.  
If one considers $ \bbG = G \times G $ and an involution $ \theta(g_1, g_2) = (g_2, g_1) $ of $ \bbG $, 
the symmetric subgroup $ \bbK = \bbG^{\theta} $ is the diagonal subgroup 
$ \diag(G) \subset \bbG $.  
Thus $ (G \times G, \diag(G)) $ is a symmetric pair.  
Then $ \bbP = P_1 \times P_2 $ is a parabolic subgroup of $ \bbG $ and $ \bbQ = \diag(P_3) $ is a parabolic subgroup of $ \bbK $. Therefore 
our double flag variety becomes
\begin{equation*}
\bbG/ \bbP \times \bbK/ \bbQ
= (G \times G)/(P_1 \times P_2) \times (\diag(G)/\diag(P_3))
\simeq \GFl_{P_1} \times \GFl_{P_2} \times \GFl_{P_3},
\end{equation*}
which is a triple flag variety for $G$.  
So the double flag variety for symmetric pair 
 is a generalization of the triple flag variety.
We can take a parabolic subgroup 
 $ \bbP' = P_3 \times P_3 $ of $\bbG$
 so that $ \bbP' \cap \bbK = \diag(P_3) = \bbQ $ holds.

A triple flag variety $ \GFl_{P_1} \times \GFl_{P_2} \times \GFl_{P_3} $ 
is said to be of finite type if there are only
 finitely many $ G $-orbits in it.  
Note that this terminology agrees with that of
 the double flag variety for symmetric pair.  

If $ T $ is a maximal torus of $ G $, then $ \bbT = T \times T $ is a maximal torus of $ \bbG $.  
The root system of $ \bbG $ with respect to this $ \bbT $ is 
decomposed as
$ \Delta_{\bbG} = \Delta^{(1)} \sqcup \Delta^{(2)} $, 
where $ \Delta^{(1)} $ denotes the roots
 of the first factor and 
$ \Delta^{(2)} $ denotes the roots
 of the second factor.  
For a parabolic subgroup $ P_i $ of $ G $, there corresponds
 to a subset $ J_i $ of the set of simple roots $ \Pi $.  
We put $ \bbJ = J_1^{(1)} \sqcup J_2^{(2)} $.  
Here $ J_i^{(i)} $ is a copy of $ J_i $ in $ \Delta^{(i)} $ for $ i=1,2 $.

Let us specialize what we have already proved
 to the case of triple flag variety.  
In order to do this 
we will give a list of notations, which tells 
the correspondence of concepts.
\begin{align*}
&\bbP \bsl \bbG / \bbP' = (P_1 \bsl G / P_3) \times (P_2 \bsl G / P_3) 
\\
&\qquad\simeq \jbbwj{\bbJ}{\bbJ'} = \jwj{J_1}{J_3} \times \jwj{J_2}{J_3} 
\ni \bbw = (w_1, w_2),
\\
&\bbL' = L_3 \times L_3 \supset \bbL'_{\bbK} = \diag(L_3),
\\
&\bbU' = U_3 \times U_3,
\\
&\bbP^\bbw\cap \bbP' =
\bbw^{-1} \bbP \bbw \cap \bbP' 
= (w_1^{-1} P_1 w_1 \cap P_3) \times (w_2^{-1} P_2 w_2 \cap P_3), 
\\
&\bbP^\bbw\cap \bbL' =
\bbw^{-1} \bbP \bbw \cap \bbL' 
= (w_1^{-1} P_1 w_1 \cap L_3) \times (w_2^{-1} P_2 w_2 \cap L_3), 
\\
&(\bbP^\bbw \cap \bbL') \bsl \bbL' / \bbL'_{\bbK} 
= \bigl((P_1^{w_1}\cap L_3) \times (P_2^{w_2}\cap L_3) \bigr)
 \bsl (L_3 \times L_3) / \diag(L_3) \\
&\qquad\overset{(\star)}{\simeq} 
(P_1^{w_1}\cap L_3) \bsl L_3 / (P_2^{w_2}\cap L_3) 
= \kgbw{\bbw} \ni v.
\end{align*}
In the case of triple flag variety, 
the quotient of smaller symmetric space
 $(\bbP^\bbw \cap \bbL') \bsl \bbL' / \bbL'_{\bbK}$ 
is just a usual Bruhat decomposition.
So we can take a representative $ v $ of $ \kgbw{\bbw} $ from the Weyl group $ W_{J_3} $, 
and we further identify it with an element in $ N_T(L_3) \subset L_3 $.  
Note that, in the previous sections, 
$ \kgbw{\bbw} $ is considered as a subset of $ L_3 \times L_3 $.  
Through the bijection $ (\star) $ above, we get a correspondence between them.
\begin{equation*}
\bigl((P_1^{w_1}\cap{L_3}) \times (P_2^{w_2}\cap{L_3})\bigr)
 \cdot (v, e) \cdot \diag(L_3) 
\longleftrightarrow 
(P_1^{w_1}\cap{L_3}) \cdot v \cdot  (P_2^{w_2}\cap{L_3})
\quad (v \in W_{J_3}) .
\end{equation*}
Put $ \bbv = (v, e) \in L_3 \times L_3 $.  

Let us continue reinterpreting the notations:
\begin{align*}
\bbP^{\bbw\bbv}\cap \bbU' &=
\bbv^{-1} \bbw^{-1} \bbP \bbw \bbv \cap \bbU' 
= (v^{-1} w_1^{-1} P_1 w_1 v \cap U_3) \times (w_2^{-1} P_2 w_2 \cap U_3), 
\\
\bbP^{\bbw\bbv}\cap \bbL'_{\bbK} &=
\bbv^{-1} (\bbP^{\bbw}\cap \bbL') \bbv \cap \bbL'_{\bbK},  
\\
&= \diag(L_3) \cap \bigl( (v^{-1} w_1^{-1} P_1 w_1 v \cap L_3) \times (w_2^{-1} P_2 w_2 \cap L_3) \bigr),
\\
&= \diag (P_1^{w_1v}\cap P_2^{w_2}\cap L_3)
 \supset \diag(T).
\intertext{We therefore get}
(\bbP^{\bbw \bbv} \cap \bbU') \bsl \bbU' / \bbU'_{\bbK} 
&= ((P_1^{w_1v}\cap{U_3}) \times (P_2^{w_2}\cap U_3))
 \bsl (U_3 \times U_3) / \diag(U_3) \\
&\simeq (P_1^{w_1 v}\cap U_3) \bsl U_3 /
 (P_2^{w_2}\cap U_3).  
\end{align*}
On this last double coset space, 
the group 
$ P_1^{w_1 v} \cap P_2^{w_2} \cap L_3
\simeq 
\bbP^{\bbw \bbv}\cap{\bbL'_{\bbK}} 
$ 
acts by conjugation.

\begin{theorem}
\label{theorem:triple.classify}
Let $ P_1, P_2, P_3 $ be three parabolic subgroups of $ G $.  
Then the diagonal $ G $-orbits on the triple flag variety can be described as 
\begin{equation*}
G \bigm\backslash (G/P_1 \times G/P_2 \times G/P_3) 
\simeq \coprod_{\Ptilde{1}, \Ptilde{2}} 
\bigl( (\Ptilde{1}\cap {U_3}) \bsl U_3 / (\Ptilde{2} \cap {U_3}) \bigr) \bigm/ 
(\Ptilde{1} \cap \Ptilde{2} \cap L_3) ,
\end{equation*}
where 
$ (\Ptilde{1}, \Ptilde{2}) $ runs over all pairs 
$ (P_1^{w_1 v}, P_2^{w_2}) $
 for $w_1\in \jwj{J_1}{J_3}$,
 $w_2\in \jwj{J_2}{J_3}$, and $v\in \kgbw{( w_1, w_2 )}$.
\end{theorem}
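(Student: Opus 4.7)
The plan is to specialise Theorem~\ref{main.thm:classify.K.orbits.on.double.flag.variety} to the symmetric pair $(\bbG, \bbK) = (G \times G, \diag G)$ associated with the flip involution $\theta(g_1, g_2) = (g_2, g_1)$, taking the parabolic subgroup $\bbP = P_1 \times P_2$ and the $\theta$-stable parabolic $\bbP' = P_3 \times P_3$ whose intersection with $\bbK$ is $\bbQ = \diag(P_3)$. Under the identification $\bbK \simeq G$, the $\bbK$-action on $\bbG/\bbP \times \bbK/\bbQ$ is the diagonal $G$-action on the triple flag variety, so a parametrisation of $\bbK$-orbits translates directly into one for $G$-orbits on $G/P_1 \times G/P_2 \times G/P_3$.

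First I would invoke the main theorem and translate every piece of data using the dictionary spelled out immediately before the statement. The Bruhat index $\bbw \in \jbbwj{\bbJ}{\bbJ'}$ factorises as a pair $\bbw = (w_1, w_2) \in \jwj{J_1}{J_3} \times \jwj{J_2}{J_3}$. For the KGB index $\kgbw{\bbw}$, the standard bijection $(H_1 \times H_2) \bsl (L_3 \times L_3) / \diag(L_3) \simeq H_1 \bsl L_3 / H_2$, sending the class of $(v, e)$ to that of $v$, turns $\kgbw{\bbw}$ into $(P_1^{w_1} \cap L_3) \bsl L_3 / (P_2^{w_2} \cap L_3)$; by the Bruhat decomposition of $L_3$ one may choose representatives $v$ in $W_{J_3}$.

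Next I would unpack the fiber. Writing $\bbv = (v, e)$, one reads off $\bbP^{\bbw\bbv} \cap \bbU' = (P_1^{w_1 v} \cap U_3) \times (P_2^{w_2} \cap U_3)$, $\bbU'_{\bbK} = \diag(U_3)$, and $\bbP^{\bbw\bbv} \cap \bbL'_{\bbK} = \diag(P_1^{w_1 v} \cap P_2^{w_2} \cap L_3)$. Applying the same diagonal-quotient bijection at the unipotent level identifies $(\bbP^{\bbw\bbv} \cap \bbU') \bsl \bbU' / \bbU'_{\bbK}$ with $(P_1^{w_1 v} \cap U_3) \bsl U_3 / (P_2^{w_2} \cap U_3)$. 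Setting $\Ptilde{1} := P_1^{w_1 v}$ and $\Ptilde{2} := P_2^{w_2}$ and assembling the disjoint unions then yields the stated parametrisation.

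The only real obstacle is the bookkeeping of the two diagonal-quotient identifications: I must check that both bijections are equivariant for the relevant subgroup actions, so that the conjugation action of $\bbP^{\bbw\bbv} \cap \bbL'_{\bbK}$ on the $\bbU'$-double coset space matches, under the identification, the conjugation action of $P_1^{w_1 v} \cap P_2^{w_2} \cap L_3$ on $(P_1^{w_1 v} \cap U_3) \bsl U_3 / (P_2^{w_2} \cap U_3)$. This is straightforward because $\diag(P_1^{w_1 v} \cap P_2^{w_2} \cap L_3)$ acts on $(g_1, g_2) \in U_3 \times U_3$ by simultaneous conjugation by a single element of $L_3$, which is exactly the action that descends through the diagonal quotient to the conjugation action on the single copy of $U_3$.
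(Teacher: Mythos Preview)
Your proposal is correct and follows essentially the same route as the paper: the theorem is obtained by specialising Theorem~\ref{main.thm:classify.K.orbits.on.double.flag.variety} to $(\bbG,\bbK)=(G\times G,\diag G)$ with $\bbP=P_1\times P_2$ and $\bbP'=P_3\times P_3$, using exactly the dictionary recorded in \S\ref{subsection:triple.flag.variety} to identify the Bruhat parameters, the KGB parameters (which collapse to an ordinary Bruhat decomposition of $L_3$), and the unipotent double coset together with its conjugation action. Your explicit check of equivariance for the diagonal-quotient bijection is the one point the paper leaves implicit, so your write-up is if anything slightly more careful.
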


Let us consider the special case where $ P_3 = B $ is a Borel subgroup.  
In this case $ \Ptilde{1} \cap \Ptilde{2} \cap L_3 = T $ is a maximal torus
 of $G$ and we have 
\begin{equation*}
G \bigm\backslash (G/P_1 \times G/P_2 \times G/B) 
\simeq \coprod_{\Ptilde{1}, \Ptilde{2}} 
\bigl( (\Ptilde{1}\cap{U_B}) \bsl U_B / (\Ptilde{2}\cap{U_B}) \bigr) \bigm/ 
T .
\end{equation*}

Also by applying Theorem~\ref{thm:finiteness.iff.MFaction}
 to the present case, we conclude that: 

\begin{corollary}
A triple flag variety $ G/P_1 \times G/P_2 \times G/B $ is of finite type 
if and only if 
$ \lie{u}_{J_1}\cap \lie{u}_{J_2} $ 
is  $L_{J_1 \cap J_2}$-spherical. 
\end{corollary}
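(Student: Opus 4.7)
The plan is to realize the triple flag variety $G/P_1 \times G/P_2 \times G/B$ as the double flag variety $\GKFl{\bbP}{B_\bbK}$ for the symmetric pair $(\bbG, \bbK) = (G \times G, \diag G)$ introduced in \S\ref{subsection:triple.flag.variety}, and then to apply item (5) of Theorem~\ref{thm:finiteness.iff.MFaction}. Here $\bbP := P_1 \times P_2$ is a standard parabolic of $\bbG$, the involution is the flip $\theta(g_1, g_2) = (g_2, g_1)$, and $B_\bbK := \diag(B)$ is a Borel subgroup of $\bbK \simeq G$. Under the identification $\bbG/\bbP \times \bbK/B_\bbK \simeq G/P_1 \times G/P_2 \times G/B$ recalled in \S\ref{subsection:triple.flag.variety}, finiteness of diagonal $G$-orbits on the right is equivalent to finiteness of $\bbK$-orbits on the left, so the hypotheses of Theorem~\ref{thm:finiteness.iff.MFaction} are in force.

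The next step is to translate the abstract sphericity condition of Theorem~\ref{thm:finiteness.iff.MFaction}(5) into data attached to $G$. Since the standard Levi of $\bbP$ is $\bbL = L_{J_1} \times L_{J_2}$, I would first compute
\[
\bbL \cap \bbK = \{(g,g) : g \in L_{J_1} \cap L_{J_2}\} = \diag(L_{J_1 \cap J_2}),
\]
using the standard identity $L_{J_1} \cap L_{J_2} = L_{J_1 \cap J_2}$, which is immediate from the uniqueness of the expression of a root as an integer combination of simple roots. The Lie algebra of $\bbG$ is $\lie{g} \oplus \lie{g}$, the $(-\theta)$-eigenspace is $\{(X,-X) : X \in \lie{g}\}$, and the nilradical of $\bbP$ is $\lie{u}_\bbP = \lie{u}_{J_1} \oplus \lie{u}_{J_2}$. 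Intersecting gives
\[
\lie{u}_\bbP \cap (\lie{g} \oplus \lie{g})^{-\theta} = \{(X,-X) : X \in \lie{u}_{J_1} \cap \lie{u}_{J_2}\}.
\]

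Finally, I would observe that the assignment $(X,-X) \mapsto X$ is a $\diag(L_{J_1 \cap J_2})$-equivariant linear isomorphism between the space appearing in Theorem~\ref{thm:finiteness.iff.MFaction}(5) and $\lie{u}_{J_1} \cap \lie{u}_{J_2}$ equipped with the adjoint action of $L_{J_1 \cap J_2}$. Sphericity of the former action is therefore equivalent to $L_{J_1 \cap J_2}$-sphericity of the latter, which is precisely the statement to be proved. There is no serious obstacle, since all substantive content has been absorbed into Theorem~\ref{thm:finiteness.iff.MFaction}; the remaining argument is bookkeeping. The step most prone to an indexing slip is the matching of $\bbL \cap \bbK$ with $\diag(L_{J_1 \cap J_2})$ through the identity $L_{J_1} \cap L_{J_2} = L_{J_1 \cap J_2}$, but this is well known and causes no trouble.
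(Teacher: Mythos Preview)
Your proposal is correct and is exactly the approach the paper takes: it derives the corollary by specializing Theorem~\ref{thm:finiteness.iff.MFaction}\,(5) to the diagonal symmetric pair $(\bbG,\bbK)=(G\times G,\diag G)$ with $\bbP=P_1\times P_2$. The only detail you add beyond the paper's one-line justification is the explicit unpacking of $\bbL\cap\bbK=\diag(L_{J_1\cap J_2})$ and $\lie{u}_\bbP\cap(\lie{g}\oplus\lie{g})^{-\theta}\simeq\lie{u}_{J_1}\cap\lie{u}_{J_2}$, which is routine and correctly done.
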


\begin{remark}
This corollary can be deduced from \cite[Theorem~3]{Panyushev.1993}.
\end{remark}

The triple flag varieties $ G/P_1 \times G/P_2 \times G/B $
 of finite type
 (or equivalently the $G$-spherical double flag varieties
 $G/P_1 \times G/P_2$)
 were classified by Stembridge~\cite{Stembridge.2003}
 in connection with multiplicity-free tensor product
 of two irreducible $G$-modules.

\begin{theorem}[\cite{Stembridge.2003}]
\label{theorem:list.triple}
Let $G$ be a connected simple algebraic group.
Let $P_1, P_2$ be parabolic subgroups of $G$
 corresponding to
 sets of simple roots $J_1, J_2\subsetneq \Pi$, respectively.
Then the triple flag variety $ G/P_1 \times G/P_2 \times G/B $
 is of finite type 
 if and only if the pair
 $(\Pi\setminus J_1, \Pi\setminus J_2)$
 appears in Table~\ref{table:group.spherical}
 up to switching $J_1$ and $J_2$.
\end{theorem}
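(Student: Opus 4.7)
The plan is to reduce to a linear sphericity question and then invoke the classification of multiplicity-free linear representations. By the corollary immediately preceding the theorem, the triple flag variety $G/P_1\times G/P_2\times G/B$ is of finite type if and only if the adjoint action of the Levi $L_{J_1\cap J_2}$ on the vector space $V:=\lie{u}_{J_1}\cap\lie{u}_{J_2}$ is spherical. The task thus becomes the classification of pairs $(J_1,J_2)$ of proper subsets of $\Pi$ for which this linear action is multiplicity-free.

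To carry this out, I would first make the $L_{J_1\cap J_2}$-module structure on $V$ explicit. As a $T$-module, $V$ is the direct sum of the root spaces $\lie{g}_\alpha$ with $\alpha\in\Delta^+\setminus(\Delta_{J_1}\cup\Delta_{J_2})$, and its decomposition into irreducible $L_{J_1\cap J_2}$-modules is governed by the connected components of the Dynkin subdiagram on $J_1\cap J_2$ together with the combinatorics of which simple roots outside $J_1\cap J_2$ appear in each $\alpha$. The highest weights of the irreducible summands can be read off from the $W_{J_1\cap J_2}$-orbits of positive roots lying in $V$.

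With this description in hand, I would apply the classification of spherical linear actions of connected reductive groups, due to Kac~\cite{Kac.1980} in the irreducible case and to Benson-Ratcliff~\cite{Benson.Ratcliff.1996} and Leahy~\cite{Leahy.1998} in the reducible case. Matching the explicit pair $(L_{J_1\cap J_2},V)$ against the entries of those tables yields a finite list of spherical candidates, and I would then verify that this list agrees with Table~\ref{table:group.spherical} up to swapping $J_1$ and $J_2$.

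The main obstacle is the systematic case-by-case verification. One has to run over every simple type ($A_n$, $B_n$, $C_n$, $D_n$, $E_{6,7,8}$, $F_4$, $G_2$) and every pair $(J_1,J_2)$ of proper subsets of $\Pi$, decompose $V$ into $L_{J_1\cap J_2}$-irreducibles, and decide multiplicity-freeness. Most non-spherical pairs are dispatched by exhibiting two copies of the same natural-type irreducible constituent inside $V$, while the borderline cases require direct comparison against the reducible multiplicity-free tables. This systematic enumeration is the content of Stembridge's argument in~\cite{Stembridge.2003}, whose strategy I would follow.
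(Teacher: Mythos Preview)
The paper does not prove this theorem at all: it is stated with a citation to Stembridge~\cite{Stembridge.2003} and treated as a known result, with no argument given. So there is no ``paper's own proof'' to compare against.

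That said, your outlined strategy is a legitimate route to the classification. The reduction via the preceding corollary to sphericity of $L_{J_1\cap J_2}$ on $\lie{u}_{J_1}\cap\lie{u}_{J_2}$ is correct, and matching against the Kac/Benson--Ratcliff/Leahy tables would in principle produce the list. This is close in spirit to the approach of Littelmann~\cite{Littelmann.1994} and Panyushev~\cite{Panyushev.1993} for the maximal-parabolic case. Stembridge's own argument in~\cite{Stembridge.2003} is somewhat different: he works directly with multiplicity-freeness of tensor products of Weyl characters (equivalently, of induced representations), using combinatorial reductions on dominant weights rather than the linear-sphericity classification. His method has the advantage of being self-contained and uniform across types, whereas your approach leverages existing tables but requires careful bookkeeping of the reducible $L_{J_1\cap J_2}$-module structure of $\lie{u}_{J_1}\cap\lie{u}_{J_2}$, which can be intricate when $|\Pi\setminus J_1|$ and $|\Pi\setminus J_2|$ are both large.
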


If we assume further that $P_2$ is a Borel subgroup,
 we see from Table~\ref{table:group.spherical} that:

\begin{corollary}
\label{corollary:PBB}
Let $G$ be a connected simple algebraic group
 and $P$ a parabolic subgroup.
Then the triple flag variety
 $G/P \times G/B \times G/B$ is of finite type
 if and only if $G=SL_n$ and $G/P$
 is isomorphic to the projective space of dimension $n-1$
 $($i.e.\ $P$ is a mirabolic subgroup of $G)$.
\end{corollary}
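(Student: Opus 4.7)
The plan is to specialize Theorem~\ref{theorem:list.triple} to the case $P_2 = B$, which amounts to taking $J_2 = \emptyset$, so that $\Pi \setminus J_2 = \Pi$. Writing $J \subsetneq \Pi$ for the subset corresponding to $P$, Theorem~\ref{theorem:list.triple} asserts that $G/P \times G/B \times G/B$ is of finite type if and only if the pair $(\Pi \setminus J,\, \Pi)$, or equivalently (after swapping) $(\Pi,\, \Pi \setminus J)$, appears in Table~\ref{table:group.spherical}. Thus the task reduces to identifying the entries of that table in which one coordinate is the entire set of simple roots.

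The first step is to scan Table~\ref{table:group.spherical} row by row, restricting attention to pairs where at least one of the two coordinates is $\Pi$ itself. The second step is to observe that such rows occur only in type $A_{n-1}$: in every other simple type, the entries of Stembridge's list have strict proper subsets in both coordinates, because the other types do not admit a nontrivial parabolic $P_1$ with $P_1$-action on the full flag variety $G/B$ having an open orbit. The third step is to observe that within type $A_{n-1}$, the entries of the form $(\Pi \setminus J,\, \Pi)$ force $\Pi \setminus J$ to be a single end node of the Dynkin diagram of $A_{n-1}$, i.e.\ $\{\alpha_1\}$ or $\{\alpha_{n-1}\}$. The associated parabolic $P$ is then the stabilizer of a line or of a hyperplane in $\mathbb{C}^n$, and in either case $G/P \cong \mathbb{P}^{n-1}$; such $P$ is by definition a mirabolic subgroup of $SL_n$.

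Combining these steps yields both directions of the equivalence. There is no computational content; the only possible obstacle is a careful tabular verification to ensure no entries are missed, in particular that Dynkin diagram symmetries and the swap of $J_1, J_2$ are correctly accounted for, so that the end-node cases in type $A_{n-1}$ indeed account for all pairs listed in Table~\ref{table:group.spherical} with a coordinate equal to $\Pi$.
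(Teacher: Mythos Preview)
Your proposal is correct and follows essentially the same approach as the paper: the corollary is stated immediately after Theorem~\ref{theorem:list.triple} with the remark that it is obtained ``from Table~\ref{table:group.spherical}'' by taking $P_2=B$, and your argument makes this table inspection explicit by noting that the only entries with one coordinate equal to $\Pi$ are $(\{\alpha_1\},\text{any})$ and $(\{\alpha_n\},\text{any})$ in type~$A$.
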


\begin{remark}
Recently Tanaka~\cite{Tanaka.2012} proved that $G/P_1 \times G/P_2$ is
 $G$-spherical if and only if the action of a compact real form of $G$ 
 on $G/P_1 \times G/P_2$ is strongly visible.
\end{remark}

\begin{longtable}{c|c}
\hline
\rule[0pt]{0pt}{12pt}
$\lie{g}$ & $(\Pi \setminus J_1,\ \Pi \setminus J_2)$
 up to switching $J_1$ and $J_2$
\\
\hline\hline
\rule[0pt]{0pt}{12pt}
\rule[-5pt]{0pt}{0pt}
$\lie{sl}_{n+1}$ & 
\begin{xy}
\ar@{-} (0,0) *++!D{\alpha_1} *{\circ}="A"; (10,0) *++!D{\alpha_2} 
 *{\circ}="B"
\ar@{-} "B"; (20,0)="C" 
\ar@{.} "C"; (30,0)="D" 
\ar@{-} "D"; (40,0) *++!D{\alpha_{n}} *{\circ}="E"
\end{xy}\\

\rule[0pt]{0pt}{12pt}
 & $(\{\alpha_i\}, \{\alpha_j\}) (\forall i,j)$,
 $(\{\alpha_1, \alpha_i\}, \{\alpha_j\}) (\forall i,j)$,  \\

\rule[0pt]{0pt}{12pt}
 & $(\{\alpha_i, \alpha_n\}, \{\alpha_j\}) (\forall i,j)$, 
$(\{\alpha_i, \alpha_{i+1}\}, \{\alpha_j\}) (\forall i,j)$, \\

\rule[0pt]{0pt}{12pt}
 & $(\{\alpha_i, \alpha_j\}, \{\alpha_2\}) (\forall i,j)$,
   $(\{\alpha_i, \alpha_j\}, \{\alpha_{n-1}\}) (\forall i,j)$, \\

\rule[0pt]{0pt}{12pt}
\rule[-5pt]{0pt}{0pt}
 & $(\{\alpha_1\}, \text{any})$, $(\{\alpha_n\}, \text{any})$ \\

\hline

\rule[0pt]{0pt}{12pt}
\rule[-5pt]{0pt}{0pt}
$\lie{so}_{2n+1}$ & 
\begin{xy}
\ar@{-} (0,0) *++!D{\alpha_1} *{\circ}="A"; (10,0) *++!D{\alpha_2} 
 *{\circ}="B"
\ar@{-} "B"; (20,0)="C" 
\ar@{.} "C"; (30,0)="D" 
\ar@{-} "D"; (40,0) *++!D{\alpha_{n-1}} *{\circ}="E"
\ar@{=>} "E"; (50,0) *++!D{\alpha_n} *{\circ}="F"
\end{xy}\\

\rule[0pt]{0pt}{12pt}
\rule[-5pt]{0pt}{0pt}
 & $(\{\alpha_1\}, \{\alpha_i\}) (\forall i)$,
 $(\{\alpha_n\}, \{\alpha_n\})$ \\

\hline

\rule[0pt]{0pt}{12pt}
\rule[-5pt]{0pt}{0pt}
$\lie{so}_{2n}$  &
\begin{xy}
\ar@{-} (0,0) *++!D{\alpha_1} *{\circ}="A"; (10,0) *++!D{\alpha_2} 
 *{\circ}="B"
\ar@{-} "B"; (20,0)="C" 
\ar@{.} "C"; (30,0)="D" 
\ar@{-} "D"; (40,0) *+!DR{\alpha_{n-2}} *{\circ}="E"
\ar@{-} "E"; (45,8.6)  *+!L{\alpha_{n-1}} *{\circ}
\ar@{-} "E"; (45,-8.6)  *+!L{\alpha_n} *{\circ}
\end{xy}\\

\rule[0pt]{0pt}{12pt}
 $n\geq 4$
 & $(\{\alpha_1\}, \{\alpha_i\}) (\forall i)$,
 $(\{\alpha_i\}, \{\alpha_j\}) (i=1,2,3,\ j=n-1,n)$, \\

\rule[0pt]{0pt}{12pt}
 & $(\{\alpha_{n-1}\}, \{\alpha_{n-1}\})$,
 $(\{\alpha_{n-1}\}, \{\alpha_n\})$,
 $(\{\alpha_{n}\}, \{\alpha_n\})$, \\

\rule[0pt]{0pt}{12pt}
 & $(\{\alpha_1\}, \{\alpha_i,\alpha_{n-1}\}) (\forall i)$,
 $(\{\alpha_1\}, \{\alpha_i,\alpha_n\}) (\forall i)$, \\

\rule[0pt]{0pt}{12pt}
 & $(\{\alpha_{n-1}\}, \{\alpha_1,\alpha_2\})$,
 $(\{\alpha_n\}, \{\alpha_1,\alpha_2\})$,\\

\rule[0pt]{0pt}{12pt}
 & $(\{\alpha_i\}, \{\alpha_j,\alpha_k\})
 (i=n-1,n,\ \{j,k\}\subset\{1,n-1,n\})$,\\

\rule[0pt]{0pt}{12pt}
\rule[-5pt]{0pt}{0pt}
 & $(\{\alpha_3\}, \{\alpha_2,\alpha_4\})$  if $n=4$,\ 
 $(\{\alpha_4\}, \{\alpha_2,\alpha_3\})$ if $n=4$\\

\hline

\rule[0pt]{0pt}{12pt}
\rule[-5pt]{0pt}{0pt}
$\lie{sp}_n$ &
\begin{xy}
\ar@{-} (0,0) *++!D{\alpha_1} *{\circ}="A"; (10,0) *++!D{\alpha_2} 
 *{\circ}="B"
\ar@{-} "B"; (20,0)="C" 
\ar@{.} "C"; (30,0)="D" 
\ar@{-} "D"; (40,0) *++!D{\alpha_{n-1}} *{\circ}="E"
\ar@{<=} "E"; (50,0) *++!D{\alpha_n} *{\circ}="F"
\end{xy}\\

\rule[0pt]{0pt}{12pt}
\rule[-5pt]{0pt}{0pt}
 & $(\{\alpha_1\}, \{\alpha_i\}) (\forall i)$,
 $(\{\alpha_n\}, \{\alpha_n\})$ \\

\hline

\rule[0pt]{0pt}{12pt}
\rule[-5pt]{0pt}{0pt}
$\lie{e}_6$ &
\begin{xy}
\ar@{-} (0,0) *++!D{\alpha_1} *{\circ}="A"; (10,0) *++!D{\alpha_3} 
 *{\circ}="B"
\ar@{-} "B"; (20,0)*++!U{\alpha_4}  *{\circ}="C"
\ar@{-} "C"; (30,0) *++!D{\alpha_5}  *{\circ}="D"
\ar@{-} "C"; (20,10) *++!D{\alpha_2}  *{\circ}="G"
\ar@{-} "D"; (40,0) *++!D{\alpha_6}  *{\circ}="E"
\end{xy}\\

\rule[0pt]{0pt}{12pt}
 & $(\{\alpha_i\}, \{\alpha_j\}) (i=1,6,\ j\neq 4)$,

\rule[0pt]{0pt}{12pt}
\rule[-5pt]{0pt}{0pt}
 $(\{\alpha_1\}, \{\alpha_1,\alpha_6\})$,
 $(\{\alpha_6\}, \{\alpha_1,\alpha_6\})$ \\

\hline

\rule[0pt]{0pt}{12pt}
\rule[-5pt]{0pt}{0pt}
$\lie{e}_7$ &
\begin{xy}
\ar@{-} (0,0) *++!D{\alpha_1} *{\circ}="A"; (10,0) *++!D{\alpha_3} 
 *{\circ}="B"
\ar@{-} "B"; (20,0)*++!U{\alpha_4}  *{\circ}="C"
\ar@{-} "C"; (30,0) *++!D{\alpha_5}  *{\circ}="D"
\ar@{-} "C"; (20,10) *++!D{\alpha_2}  *{\circ}="G"
\ar@{-} "D"; (40,0) *++!D{\alpha_6}  *{\circ}="E"
\ar@{-} "E"; (50,0) *++!D{\alpha_7}  *{\circ}="F"
\end{xy}\\

\rule[0pt]{0pt}{12pt}
\rule[-5pt]{0pt}{0pt}
& $(\{\alpha_7\}, \{\alpha_1\})$,
 $(\{\alpha_7\}, \{\alpha_2\})$,
 $(\{\alpha_7\}, \{\alpha_7\})$ \\
\hline
\caption{$G/P_1 \times G/P_2 \times G/B$ of finite type}
\label{table:group.spherical}
\vphantom{$\Biggm|$}
\end{longtable}


\section{Classification of $K$-spherical $G/P$}
\label{section:k.spherical}

In this section, we give a classification of the triples
 $(G, K, P)$ such that $G/P\times K/B_K$ is of finite type, where $B_K$
 is a Borel subgroup of $K$.
It is known that any symmetric pair $(\bbG, \bbK)$ with
 $\bbG$ connected, simply connected, and semisimple
 is a direct product of symmetric pairs $(G,K)$ such that
\begin{itemize}
\item $G$ is simple, or
\item $G=G'\times G'$, $K=\diag{G'}$, and $G'$ is simple.
\end{itemize}
For the latter case, 
 $G/P\times K/B_K$ can be written
 as the triple flag variety $G'/P'_1\times G'/P'_2 \times G'/B_{G'}$
 and the classification was already given
 (see Theorem~\ref{theorem:list.triple} and \cite{Stembridge.2003}).

In the rest of this section we assume that $G$ is simple.

\medskip
We first consider the case
 where $(G,K)$ is the complexification of a Hermitian symmetric pair,
 or equivalently the center of $K$ is one-dimensional.
In this case, $K$ equals the Levi component of
 a maximal parabolic subgroup of $G$.
Therefore we can choose a $\theta$-stable Borel subgroup $B$
 of $G$ and a simple root $\alpha_i\in\Pi$ such that
$K=L_{\Pi\setminus\{\alpha_i\}}$.
Then $K=P_{\Pi\setminus\{\alpha_i\}}\cap
 P_{\Pi\setminus\{\alpha_i\}}^-$,
 where $P_{\Pi\setminus\{\alpha_i\}}^-$ is the opposite parabolic subgroup
 of $P_{\Pi\setminus\{\alpha_i\}}$.

\begin{lemma}
\label{lemma:double.triple} 
Suppose that
 $(G,K)$ is the complexification of a Hermitian symmetric pair
 and $P$ is a parabolic subgroup of $G$.
Choose a $\theta$-stable Borel subgroup $B$
 and a simple root $\alpha_i$ such that
 $K=L_{\Pi\setminus\{\alpha_i\}}$.
Then $G/P \times K/B_K$ is of finite type if and only if
 $G/P \times G/P_{\opposite{\Pi\setminus\{\alpha_i\}}} \times G/B$
 is of finite type.
Here
 $\opposite{\Pi\setminus\{\alpha_i\}}:=-w_0(\Pi\setminus\{\alpha_i\})$
for the longest element $w_0 \in W$.
\end{lemma}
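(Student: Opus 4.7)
The plan is to translate both finiteness conditions into sphericity statements and bridge them via a standard orbit lemma. Without loss of generality I may assume that $P$ is a standard parabolic containing the $\theta$-stable Borel $B$, since both sides of the claimed equivalence depend only on the $G$-conjugacy class of $P$.

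First I rewrite each side as a sphericity condition. Fixing the basepoint $eB_K \in K/B_K$ gives $K\bsl (G/P \times K/B_K) \simeq B_K\bsl G/P$, so the double flag variety is of finite type iff $G/P$ is $K$-spherical; similarly, fixing the basepoint $eB \in G/B$ gives $G\bsl (G/P \times G/P_{\opposite{\Pi\setminus\{\alpha_i\}}} \times G/B) \simeq B\bsl(G/P \times G/P_{\opposite{\Pi\setminus\{\alpha_i\}}})$, so the triple flag variety is of finite type iff $G/P \times G/P_{\opposite{\Pi\setminus\{\alpha_i\}}}$ is $G$-spherical. Next I use the $G$-equivariant isomorphism $G/P_{\opposite{\Pi\setminus\{\alpha_i\}}} \simeq G/P_{\Pi\setminus\{\alpha_i\}}^{-}$, which follows from the identity $w_0^{-1} P_J^{-} w_0 = P_{\opposite{J}}$ for $J = \Pi\setminus\{\alpha_i\}$. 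So the goal reduces to showing: $G/P$ is $K$-spherical iff $G/P \times G/P_{\Pi\setminus\{\alpha_i\}}^{-}$ is $G$-spherical.

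The heart of the argument is the following orbit lemma, which I prove by a direct dimension count: if $Y$ is an irreducible $G$-variety carrying an open $B$-orbit isomorphic to $B/S$, then for any irreducible $G$-variety $X$ the product $X \times Y$ admits an open $B$-orbit iff $X$ admits an open $S$-orbit. To apply it I take $Y = G/P_{\Pi\setminus\{\alpha_i\}}^{-}$, whose open $B$-orbit is the big Schubert cell $B\cdot eP_{\Pi\setminus\{\alpha_i\}}^{-}$. The main technical computation is to identify its $B$-stabilizer: using the Levi decomposition $P_{\Pi\setminus\{\alpha_i\}}^{-} = L_{\Pi\setminus\{\alpha_i\}} \ltimes U_{\Pi\setminus\{\alpha_i\}}^{-}$ and the fact that $U_B \cap U_{\Pi\setminus\{\alpha_i\}}^{-} = \{e\}$, a direct root-space analysis gives
\[
B \cap P_{\Pi\setminus\{\alpha_i\}}^{-} = T \cdot (U_B \cap L_{\Pi\setminus\{\alpha_i\}}) = B \cap L_{\Pi\setminus\{\alpha_i\}} = B \cap K = B_K.
\]
Thus $S = B_K$, and applying the orbit lemma with $X = G/P$ delivers the desired equivalence.

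I expect no serious obstacle. The only delicate point is the orbit lemma itself, which must be stated carefully (in irreducible varieties, having a full-dimensional $B$-orbit is equivalent to having an open one), and the identification of the stabilizer as $B_K$, which uses crucially that in the Hermitian situation $K$ is already the standard Levi $L_{\Pi\setminus\{\alpha_i\}}$, so that $B_K$ appears directly as $B \cap L_{\Pi\setminus\{\alpha_i\}}$, the stabilizer of the open Schubert cell in $G/P_{\Pi\setminus\{\alpha_i\}}^{-}$.
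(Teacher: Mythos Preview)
Your proposal is correct and follows essentially the same route as the paper: both reduce to the identification $B \cap P_{\Pi\setminus\{\alpha_i\}}^{-} = B \cap L_{\Pi\setminus\{\alpha_i\}} = B_K$ and use it to link $K$-sphericity of $G/P$ with $G$-sphericity of $G/P \times G/P_{\Pi\setminus\{\alpha_i\}}^{-}$. The paper phrases this link via the open immersion $G/B_K \hookrightarrow G/B \times G/P_{\Pi\setminus\{\alpha_i\}}^{-}$ rather than your orbit lemma, but this is the same argument in slightly different packaging.
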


\begin{proof}
We follow an argument of \cite[Theorem 2]{NO.2011}.  
The opposite parabolic subgroup $P_{\Pi\setminus\{\alpha_i\}}^-$
 of $P_{\Pi\setminus\{\alpha_i\}}$ is conjugate to
 $P_{\opposite{\Pi\setminus\{\alpha_i\}}}$.
Hence
 $G/P \times G/P_{\opposite{\Pi\setminus\{\alpha_i\}}} \times G/B$
 is of finite type if and only if 
 $G/P \times G/P_{\Pi\setminus\{\alpha_i\}}^-$
 is $G$-spherical.
Since $BP_{\Pi\setminus\{\alpha_i\}}^-$ is open in $G$ and since
$B\cap P_{\Pi\setminus\{\alpha_i\}}^-
 =B\cap L_{\Pi\setminus\{\alpha_i\}} =B_K$,
a natural morphism $G/B_K \to G/B\times G/P_{\Pi\setminus\{\alpha_i\}}^-$
 is an open immersion.
Hence the conditions above are also equivalent to that 
$P$ has an open orbit in $G/B_K$.
\end{proof}

By Theorem~\ref{theorem:list.triple} and Lemma~\ref{lemma:double.triple}, 
 we get a list of $G/P \times K/B_K$ of finite type 
if $ (G, K) $ is a Hermitian symmetric pair.

\medskip

For the remaining pairs $(G, K)$, we use Theorem~\ref{thm:finiteness.iff.MFaction} 
and the classification of spherical linear actions by Benson and Ratcliff \cite{Benson.Ratcliff.1996}.
We carry out a classification according to the following procedure.
\begin{enumerate}
\item
For each symmetric pair $(G, K)$
 we fix a $\theta$-stable Borel subgroup $B$ of $G$
 and a $\theta$-stable Cartan subgroup $T$ in $B$.
\item
Take a standard parabolic subgroup $P$
 and determine
 $\lie{l} \cap \lie{k}$ and $\lie{u}\cap \lie{g}^{-\theta}$.
\item
Check whether the $(L \cap K)$-action on 
 $\lie{u}\cap \lie{g}^{-\theta}$ is spherical
 using the list of \cite{Benson.Ratcliff.1996}.
\end{enumerate}
In addition, the obvious dimension condition
 $\dim G/P + \dim K/B_K \leq \dim K$
 is helpful in some cases.
This is equivalent to $\dim L \geq \dim G-\dim K-\rank K$.
We note that the choice of 
 a $\theta$-stable Borel subgroup $B$ is not unique up to $K$-conjugacy
 in general and the Lie algebras
 $\lie{l} \cap \lie{k}$ and $\lie{u}\cap \lie{g}^{-\theta}$
 depend on this choice.
For our purpose, it is enough to check
 Theorem~\ref{thm:finiteness.iff.MFaction} 
\eqref{item:finiteness.iff.MFaction.BK}
for one choice of $B$.

Let us explain how the procedure above will be done
 in more detail.

For step (1) we describe $B$ and $T$ by using a Vogan diagram
 (see \cite[\S~VI.8]{Knapp.2002} for details).
The Vogan diagram is defined by incorporating the involution
 in the Dynkin diagram.
If $B$ is $\theta$-stable, then $\theta$ permutes simple roots.
The simple roots in the same $\theta$-orbit are connected by arrows.
If a simple root $\alpha$ is fixed by $\theta$, then it is painted or not 
 according to $\lie{g}_\alpha\subset\lie{g}^{-\theta}$ 
 or $\lie{g}_\alpha\subset\lie{k}=\lie{g}^{\theta}$.
We label the simple roots for $\lie{g}$ and $\lie{k}$ as
 $\Pi=\{\alpha_1,\alpha_2,\dots\}$ and $\Pi_K=\{\beta_1,\beta_2,\dots\}$,
 respectively.
Then a simple root $\beta_i$ for $\lie{k}$ is
 written as a restriction $\alpha|_{\lie{t}^{\theta}}$
 for some $\alpha\in\Delta^+=\Delta^+(\lie{g},\lie{t})$.
We note that if $\alpha,\alpha'\in\Delta^+$ and 
 $\alpha|_{\lie{t}^{\theta}}=\alpha'|_{\lie{t}^{\theta}}$,
 then $\alpha=\alpha'$ or $\theta\alpha=\alpha'$ holds.

For step (2) take a standard parabolic subgroup $P=P_J$
 with $J\subset \Pi$.
Put
\[
J_K=\{\beta_i:\exists \alpha\in\Delta^+\text{\ such that\ }
 \alpha|_{\lie{t}^\theta}=\beta_i \text{\ and\ } 
 \alpha,\theta\alpha\in\Delta(\lie{l},\lie{t})\}
\]
so that $L_J\cap K=L_{K,J_K}$, which is a Levi subgroup of $K$
 corresponding to $J_K\subset \Pi_K$.
We then describe the $(L\cap K)$-module
 $\lie{u}\cap \lie{g}^{-\theta}$
 by giving the highest weights of its irreducible constituents.
For $\alpha\in\Delta^+$, the restriction $\alpha|_{\lie{t}^\theta}$
 is a weight in $\lie{u}\cap\lie{g}^{-\theta}$ if and only if 
 one of the following two conditions holds:
\begin{itemize}
\item $\alpha=\theta\alpha\in\Delta(\lie{u},\lie{t})$
 and $\lie{g}_\alpha\subset \lie{g}^{-\theta}$,
\item $\alpha\neq \theta\alpha$ and
 $\alpha,\theta\alpha\in\Delta(\lie{u},\lie{t})$.
\end{itemize}
We write $\Lambda^+(\lie{u}\cap\lie{g}^{-\theta})$
 for the set of highest weights of irreducible constituents
 of the $(L\cap K)$-module $\lie{u}\cap\lie{g}^{-\theta}$.
Denote by $\omega_i\in(\lie{t}^\theta)^*$ the fundamental
 weight corresponding to $\beta_i$.
Then $\Lambda^+(\lie{u}\cap\lie{g}^{-\theta})$
 can be given in terms of $\omega_i$
 and step (3) can be carried out.

We give computations for each case in the following. 
We abbreviate $\alpha_k+\alpha_{k+1}+\cdots+\alpha_l$
 to $\alpha_{[k,l]}$.

\subsection{$(\lie{sl}_{n}, \lie{so}_{n})$}
\ 

Let $(\lie{g},\lie{k})=(\lie{sl}_{n},\lie{so}_{n})$.
We assume $n$ is even and put $m=\frac{n}{2}$.
The case where $n$ is odd can be treated similarly.
We fix a $\theta$-stable Borel subgroup $B$,
 a $\theta$-stable Cartan subgroup $T$
 and a labeling $\alpha_1,\alpha_2,\dots,\alpha_{n-1}$
 of simple roots corresponding to the following Vogan diagram.
\begin{align*}
\begin{xy}
\ar@{-} (0,0) *++!D{\alpha_1} *{\circ}="A"; (10,0) 
\ar@{.} (10,0) ; (20,0) 
\ar@{-} (20,0) ; (30,0) *++!D{\alpha_{m-1}} *{\circ}="B"
\ar@{-} "B" ; (40,0) *++!D{\alpha_m} *{\bullet}="C"
\ar@{-} "C"; (50,0)*++!D{\alpha_{m+1}}  *{\circ}="D"
\ar@{-} "D"; (60,0) 
\ar@{.} (60,0); (70,0) 
\ar@{-} (70,0); (80,0) *++!D{\alpha_{n-1}}  *{\circ}="E"
\ar@/_1.0pc/@{<->} "B"; "D"
\ar@/_2.0pc/@{<->} "A"; "E"
\ar@{-} (100,0) *++!D{\beta_1} *{\circ}="F"; (110,0)="G" 
\ar@{.} "G"; (120,0)="H" 
\ar@{-} "H"; (130,0) *+!DR{\beta_{m-2}} *{\circ}="I"
\ar@{-} "I"; (135,8.6)  *+!L{\beta_{m-1}} *{\circ}
\ar@{-} "I"; (135,-8.6)  *+!L{\beta_m} *{\circ}
\end{xy}
\end{align*}
Let $\beta_i:=\alpha_i|_{\lie{t}^\theta}$ for $1\leq i\leq m-1$
 and $\beta_m:=(\alpha_{m-1}+\alpha_m)|_{\lie{t}^\theta}$.
Then $\Pi_K=\{\beta_1,\dots,\beta_m\}$ is a set of simple roots 
 for $K$ and the corresponding Dynkin diagram is given as above.

Suppose first that $J=\Pi\setminus\{\alpha_i\}$ with $1\leq i\leq m$.
Then $J_K=\Pi_K\setminus\{\beta_i\}$ if $i\neq m-1$ and
 $J_K=\Pi_K\setminus\{\beta_{m-1},\beta_m\}$ if $i=m-1$. 
We have 
 $\lie{l}\cap\lie{k}=\lie{l}_{K,J_K}
 \simeq \lie{gl}_i\oplus\lie{so}_{n-2i}$,
\[\Delta(\lie{u}\cap\lie{g}^{-\theta},\lie{t}^\theta)
=\{\alpha_{[k,l]}|_{\lie{t}^\theta}:
 k\leq i \text{\ and\ } n-i\leq l\},\quad 
\Lambda^+(\lie{u}\cap\lie{g}^{-\theta})
=\{\alpha_{[1,n-1]}|_{\lie{t}^\theta}\}=\{2\omega_1\}.\]
Hence $\lie{u}\cap\lie{g}^{-\theta} \simeq  S^2(\bbC^i)$,
 on which $L\cap K$ acts spherically.
The case $i>m$ is similar.

Suppose next that $J=\Pi\setminus\{\alpha_i,\alpha_j\}$
 with $1\leq i<j\leq n$.
We may assume that $i<m$ and $i\leq j'$, where $j':=\min\{j,n-j\}$.
Then $J_K=\Pi_K\setminus\{\beta_i,\beta_{j'}\}$ and
 $\lie{l}\cap\lie{k}=\lie{l}_{K,J_K}
 \simeq \lie{gl}_i\oplus\lie{gl}_{j'-i}\oplus\lie{so}_{n-2j'}$.
We have 
$\Delta(\lie{u}\cap\lie{g}^{-\theta},\lie{t}^\theta)=
\{\alpha_{[k,l]}|_{\lie{t}^\theta}:k\leq j' \text{\ and\ } n-j'\leq l\}$
if $j\leq m$ and 
$\Delta(\lie{u}\cap\lie{g}^{-\theta},\lie{t}^\theta)=
\{\alpha_{[k,l]}|_{\lie{t}^\theta}:k\leq j' \text{\ and\ } n-j'\leq l\}
\cup\{\alpha_{[k,l]}|_{\lie{t}^\theta}: k\leq i\text{\ and\ } j'\leq l\}$
if $j>m$.
Hence 
\begin{align*}
&\Lambda^+(\lie{u}\cap\lie{g}^{-\theta})=
\{\alpha_{[1,n-1]}|_{\lie{t}^{\theta}},\alpha_{[1,n-i-1]}|_{\lie{t}^{\theta}},
 \alpha_{[i+1,n-i-1]}|_{\lie{t}^{\theta}}\}
\\
&\qquad\qquad
=\{2\omega_1, \omega_1-\omega_i+\omega_{i+1},-2\omega_i+2\omega_{i+1}\},
\\
&\lie{u}\cap\lie{g}^{-\theta}\simeq S^2(\bbC^i)\oplus(\bbC^i\otimes \bbC^{j'-i})
 \oplus S^2(\bbC^{j'-i})
\end{align*}
if $j\leq m$,
\begin{align*}
&\Lambda^+(\lie{u}\cap\lie{g}^{-\theta})=
\{\alpha_{[1,n-1]}|_{\lie{t}^{\theta}},\alpha_{[1,n-i-1]}|_{\lie{t}^{\theta}},
 \alpha_{[i+1,n-i-1]}|_{\lie{t}^{\theta}},
 \alpha_{[1,j-1]}|_{\lie{t}^{\theta}}\}\\
&\qquad\qquad
 =\{2\omega_1, \omega_1-\omega_i+\omega_{i+1},-2\omega_i+2\omega_{i+1},
 \omega_1-\omega_{j'}+\omega_{j'+1}\},\\
&\lie{u}\cap\lie{g}^{-\theta}\simeq S^2(\bbC^i)\oplus(\bbC^i\otimes \bbC^{j'-i})
 \oplus S^2(\bbC^{j'-i})\oplus (\bbC^i\otimes \bbC^{n-2j'})
\end{align*}
if $m<j<n-i$, and 
\begin{align*}
&\Lambda^+(\lie{u}\cap\lie{g}^{-\theta})=
\{\alpha_{[1,n-1]}|_{\lie{t}^{\theta}},\alpha_{[1,j-1]}|_{\lie{t}^{\theta}}\}
 =\{2\omega_1, \omega_1-\omega_{j'}+\omega_{j'+1}\},\\
&\lie{u}\cap\lie{g}^{-\theta}\simeq S^2(\bbC^i)\oplus (\bbC^i\otimes \bbC^{n-2j'})
\end{align*}
if $j=n-i$.
We can see that none of them are $(L\cap K)$-spherical.

We therefore conclude that
 $G/P_J\times K/{B_K}$ is of finite type if and only if
 $|\Pi\setminus J|= 1$.

\subsection{$(\lie{sl}_{2n}, \lie{sp}_{n})$}\label{slsp}
\

Let $(\lie{g},\lie{k})=(\lie{sl}_{2n},\lie{sp}_{n})$.
We fix $B$, $T$ and simple roots $\alpha_1,\dots,\alpha_{2n-1}$
 corresponding to the following Vogan diagram.
\begin{align*}
\begin{xy}
\ar@{-} (0,0) *++!D{\alpha_1} *{\circ}="A"; (10,0) 
\ar@{.} (10,0) ; (20,0) 
\ar@{-} (20,0) ; (30,0) *++!D{\alpha_{n-1}} *{\circ}="B"
\ar@{-} "B" ; (40,0) *++!D{\alpha_n} *{\circ}="C"
\ar@{-} "C"; (50,0)*++!D{\alpha_{n+1}}  *{\circ}="D"
\ar@{-} "D"; (60,0) 
\ar@{.} (60,0); (70,0) 
\ar@{-} (70,0); (80,0) *++!D{\alpha_{2n-1}}  *{\circ}="E"
\ar@/_1.0pc/@{<->} "B"; "D"
\ar@/_2.0pc/@{<->} "A"; "E"
\ar@{-} (100,0) *++!D{\beta_1} *{\circ}="F"; (110,0)="G" 
\ar@{.} "G"; (120,0)="H" 
\ar@{-} "H"; (130,0) *++!D{\beta_{n-1}} *{\circ}="I"
\ar@{<=} "I"; (140,0)  *++!D{\beta_n} *{\circ}
\end{xy}
\end{align*}
Let $\beta_i:=\alpha_i|_{\lie{t}^\theta}$ for $1\leq i\leq n$.
Then $\Pi_K=\{\beta_1,\dots,\beta_n\}$ is a set of simple roots 
 for $K$ and the corresponding Dynkin diagram is given as above.

Suppose that $J=\Pi\setminus\{\alpha_i,\alpha_j\}$
 with $1\leq i<j\leq 2n-1$.
We assume that $i<n$ and $i\leq j'$, where $j':=\min\{j,2n-j\}$.
Then $J_K=\Pi_K\setminus\{\beta_i,\beta_{j'}\}$ and
 $\lie{l}\cap\lie{k}=\lie{l}_{K,J_K}
 \simeq \lie{gl}_i\oplus\lie{gl}_{j'-i}\oplus\lie{sp}_{n-j'}$.
We have 
$\Delta(\lie{u}\cap\lie{g}^{-\theta},\lie{t}^\theta)=
\{\alpha_{[k,l]}|_{\lie{t}^\theta}:k+l<2n\text{\ and\ } 2n-j'\leq l\}$
if $j\leq n$ and 
$\Delta(\lie{u}\cap\lie{g}^{-\theta},\lie{t}^\theta)=
\{\alpha_{[k,l]}|_{\lie{t}^\theta}:k+l<2n\text{\ and\ } 2n-j'\leq l\}
\cup\{\alpha_{[k,l]}|_{\lie{t}^\theta}:
 k+l< 2n,\ k\leq i\text{\ and\ } j'\leq l\}$
if $j>n$.
As in the case $(\lie{sl}_{n},\lie{so}_{n})$ above, we have 
\begin{align*}
\lie{u}\cap\lie{g}^{-\theta}
 \simeq
\begin{cases}
\bigwedge^2\bbC^i\oplus \bigwedge^2\bbC^{j'-i}
 \oplus(\bbC^i\otimes \bbC^{j'-i})
\quad
\text{\ if $j\leq n$},\\
\bigwedge^2\bbC^i\oplus \bigwedge^2\bbC^{j'-i}
 \oplus(\bbC^i\otimes \bbC^{j'-i})
 \oplus (\bbC^i\otimes \bbC^{2n-2j'})
\quad
\text{\ if $n<j<2n-i$}, \\
\bigwedge^2\bbC^i \oplus(\bbC^i\otimes \bbC^{2n-2j'})
\quad
\text{\ if $j=2n-i$}.
\end{cases}
\end{align*}
This is $(L\cap K)$-spherical if and only if $i=1$ or $i+1=j$.

Suppose that $J=\Pi\setminus\{\alpha_1,\alpha_i,\alpha_{i+1}\}$
 with $2\leq i\leq 2n-3$.
Put $i':=\min\{i,2n-i-1\}$. 
Then $J_K=\Pi_K\setminus\{\beta_1,\beta_{i'},\beta_{i'+1}\}$ and
 $\lie{l}\cap\lie{k}=\lie{l}_{K,J_K}
 \simeq \lie{gl}_1\oplus\lie{gl}_{i'-1}\oplus\lie{gl}_1
 \oplus\lie{sp}_{n-i'-1}$.
We have 
$\Delta(\lie{u}\cap\lie{g}^{-\theta},\lie{t}^\theta)=
\{\alpha_{[k,l]}|_{\lie{t}^\theta}:k+l<2n\text{\ and\ } 2n-i'-1\leq l\}$
if $i\leq n-1$ and 
$\Delta(\lie{u}\cap\lie{g}^{-\theta},\lie{t}^\theta)=
\{\alpha_{[k,l]}|_{\lie{t}^\theta}:k+l<2n\text{\ and\ } 2n-i'-1\leq l\}
\cup\{\alpha_{[1,l]}|_{\lie{t}^\theta}: i'\leq l< 2n-1\}$
if $i\geq n$.
Then 
\begin{align*}
&\Lambda^+(\lie{u}\cap\lie{g}^{-\theta})
=\{\alpha_{[1,2n-2]}|_{\lie{t}^{\theta}},
 \alpha_{[1,2n-3]}|_{\lie{t}^{\theta}}, 
 \alpha_{[2,2n-3]}|_{\lie{t}^{\theta}} \}
=\{\omega_2,\omega_1-\omega_2+\omega_3,-\omega_1+\omega_3\},\\
\quad 
&\lie{u}\cap\lie{g}^{-\theta}
\simeq\bbC\oplus \bbC \oplus\bbC
\end{align*}
if $i=2$,
\begin{align*}
&\Lambda^+(\lie{u}\cap\lie{g}^{-\theta})
\supset\{\alpha_{[1,2n-2]}|_{\lie{t}^{\theta}},
 \alpha_{[1,2n-i'-1]}|_{\lie{t}^{\theta}}, 
 \alpha_{[2,2n-3]}|_{\lie{t}^{\theta}},
 \alpha_{[2,2n-i'-1]}|_{\lie{t}^{\theta}} \} \\
&\qquad\qquad\quad
=\{\omega_2,\omega_1-\omega_{i'}+\omega_{i'+1},-\omega_1+\omega_3,
 -\omega_1+\omega_2-\omega_{i'}+\omega_{i'+1}\},\\
\quad 
&\lie{u}\cap\lie{g}^{-\theta}
\supset \bbC^{i'-1}\oplus \bbC\oplus\bigwedge^2\bbC^{i'-1}\oplus\bbC^{i'-1}
\end{align*}
if $3\leq i\leq 2n-4$, and
\begin{align*}
&\Lambda^+(\lie{u}\cap\lie{g}^{-\theta})
\supset\{\alpha_{[1,2n-2]}|_{\lie{t}^{\theta}},
 \alpha_{[2,2n-3]}|_{\lie{t}^{\theta}},
 \alpha_{[1,2]}|_{\lie{t}^{\theta}}\}
=\{\omega_2,-\omega_1+\omega_3, \omega_1+\omega_2-\omega_3\},\\
\quad 
&\lie{u}\cap\lie{g}^{-\theta}
\supset\bbC\oplus \bbC \oplus\bbC
\end{align*}
if $i=2n-3$.
We can see that $\lie{u}\cap\lie{g}^{-\theta}$ is $(L\cap K)$-spherical
 if and only if $i=2$.

Suppose that $J=\Pi\setminus\{\alpha_1,\alpha_i,\alpha_{2n-1}\}$
 with $2\leq i\leq n$.
Then $J_K=\Pi_K\setminus\{\beta_1,\beta_i\}$ and
 $\lie{l}\cap\lie{k}=\lie{l}_{K,J_K}
 \simeq \lie{gl}_1\oplus\lie{gl}_{i-1}\oplus\lie{sp}_{n-i}$.
We have 
\begin{align*}
&\Lambda^+(\lie{u}\cap\lie{g}^{-\theta})
=\{\alpha_{[1,2n-2]}|_{\lie{t}^{\theta}},
 \alpha_{[1,2n-i-1]}|_{\lie{t}^{\theta}},
 \alpha_{[2,2n-3]}|_{\lie{t}^{\theta}},
 \alpha_{[1,i-1]}|_{\lie{t}^{\theta}}\}\\
&\qquad\qquad
=\{\omega_2,\omega_1-\omega_i+\omega_{i+1},-\omega_1+\omega_3,
 \omega_1+\omega_{i-1}-\omega_i\},\\
&\lie{u}\cap\lie{g}^{-\theta}
\simeq\bbC^{i-1}\oplus \bbC^{2n-2i}\oplus\bigwedge^2\bbC^{i-1}\oplus(\bbC^{i-1})^*
\end{align*}
if $i<n$, 
\begin{align*}
&\Lambda^+(\lie{u}\cap\lie{g}^{-\theta})
=\{\alpha_{[1,2n-2]}|_{\lie{t}^{\theta}},
 \alpha_{[2,2n-3]}|_{\lie{t}^{\theta}},
 \alpha_{[1,n-1]}|_{\lie{t}^{\theta}}\}
=\{\omega_2,-\omega_1+\omega_3,
 \omega_1+\omega_{n-1}-\omega_n\},\\
&\lie{u}\cap\lie{g}^{-\theta}
\simeq\bbC^{n-1}\oplus \bigwedge^2\bbC^{n-1}\oplus(\bbC^{n-1})^*
\end{align*}
if $i=n\geq 3$, and  
\begin{align*}
\Lambda^+(\lie{u}\cap\lie{g}^{-\theta})
=\{\alpha_{[1,2]}|_{\lie{t}^{\theta}}, \alpha_1|_{\lie{t}^{\theta}}\}
=\{\omega_2, 2\omega_1-\omega_2\},\quad 
\lie{u}\cap\lie{g}^{-\theta}
\simeq\bbC\oplus\bbC^*
\end{align*}
if $i=n=2$.
Note that in both cases the factor $ \lie{gl}_1 $ (central torus) acts on 
$ \bbC^{i - 1} \oplus (\bbC^{i - 1})^{\ast} $ by the same scalar, which can be read off from 
the explicit description of $ \Lambda^+(\lie{u}\cap\lie{g}^{-\theta}) $.  
Therefore $ \lie{u}\cap\lie{g}^{-\theta} $ is $(L\cap K)$-spherical if and only if $i=2$.

These observations imply that
 $G/P_J\times K/{B_K}$ is of finite type if and only if
\begin{align*}
\Pi\setminus J
=
&
  \{\alpha_i\} (1\leq i\leq 2n-1), \;\; 
  \{\alpha_1,\alpha_i\} \; (2\leq i\leq 2n-1),
\\
&
 \{\alpha_i,\alpha_{2n-1}\} \; (1\leq i\leq 2n-2), \;\;
 \{\alpha_i,\alpha_{i+1}\} \; (1\leq i\leq 2n-2),
\\
&
 \{\alpha_1,\alpha_2,\alpha_3\}, \;\;
 \{\alpha_{2n-3},\alpha_{2n-2},\alpha_{2n-1}\}, \;\;
 \{\alpha_1,\alpha_2,\alpha_{2n-1}\}, \;\;
 \{\alpha_1,\alpha_{2n-2},\alpha_{2n-1}\}.
\end{align*}

\subsection{$(\lie{so}_{2n+1}, \lie{so}_p\oplus\lie{so}_q)$}\label{soso}
\ 

Let $(\lie{g},\lie{k})=(\lie{so}_{2n+1},\lie{so}_p\oplus\lie{so}_q)$
 with $p+q=2n+1$. 
We may assume that $p$ is even and $q$ is odd.
Put $p'=\frac{p}{2}$ and $q'=\frac{q-1}{2}$.
We fix $B$, $T$ and simple roots $\alpha_1,\dots,\alpha_n$
 corresponding to the following Vogan diagram.
\begin{align*}
\begin{xy}
\ar@{-} (0,0) *++!D{\alpha_1} *{\circ}="A"; (10,0) 
\ar@{.} (10,0) ; (20,0) 
\ar@{-} (20,0) ; (30,0) *++!D{\alpha_{p'-1}} *{\circ}="B"
\ar@{-} "B" ; (40,0) *++!D{\alpha_{p'}} *{\bullet}="C"
\ar@{-} "C"; (50,0)*++!D{\alpha_{p'+1}}  *{\circ}="D"
\ar@{-} "D"; (60,0) 
\ar@{.} (60,0); (70,0) 
\ar@{-} (70,0); (80,0) *++!D{\alpha_{n-1}}  *{\circ}="E"
\ar@{=>} "E"; (90,0) *++!D{\alpha_n} *{\circ}
\end{xy}
\end{align*}
Let $\beta_i:=\alpha_i$ for $i\neq p'$
 and $\beta_{p'}:=\alpha_{p'-1}+2(\alpha_{p'}+\cdots+\alpha_n)$.
Then $\Pi_K=\{\beta_1,\dots,\beta_n\}$ is a set of simple roots 
 for $K$ and the corresponding Dynkin diagram is 
\begin{align*}
\begin{xy}
\ar@{-} (0,0) *++!D{\beta_1} *{\circ}="A"; (10,0)="B" 
\ar@{.} "B"; (20,0) 
\ar@{-} (20,0); (30,0) *+!DR{\beta_{p'-2}} *{\circ}="C"
\ar@{-} "C"; (35,8.6)  *+!L{\beta_{p'-1}} *{\circ}="D"
\ar@{-} "C"; (35,-8.6)  *+!L{\beta_{p'}} *{\circ}
\ar@{-} (60,0) *++!D{\beta_{p'+1}} *{\circ}="F"; (70,0)="G" 
\ar@{.} "G"; (80,0)="H" 
\ar@{-} "H"; (90,0) *++!D{\beta_{n-1}} *{\circ}="I"
\ar@{=>} "I"; (102,0)  *++!D{\beta_{n}} *{\circ}
\end{xy}
\end{align*}

We first consider the case $p,q\geq 3$.
Suppose that $J=\Pi\setminus\{\alpha_i\}$ with $1\leq i \leq p'$.
Then $J_K=\Pi_K\setminus\{\beta_i\}$ if $i\neq p'-1$ and
 $J_K=\Pi_K\setminus\{\beta_{p'-1},\beta_{p'}\}$ if $i=p'-1$.
Hence $\lie{l}\cap\lie{k}=\lie{l}_{K,J_K}
 \simeq \lie{gl}_{i}\oplus\lie{so}_{p-2i}\oplus\lie{so}_{q}$.
We have 
$\Delta(\lie{u}\cap\lie{g}^{-\theta},\lie{t}^\theta)=
\{\alpha_{[k,l]}:k\leq i\text{\ and\ } p'\leq l\}
\cup\{\alpha_{[k,l]}+2\alpha_{[l+1,n]}: k\leq i\text{\ and\ } p'\leq l\}$.
Hence $\lie{u}\cap\lie{g}^{-\theta}\simeq \bbC^i\otimes \bbC^q$,
which is $(L\cap K)$-spherical if and only if $i=1$.

Suppose that $J=\Pi\setminus\{\alpha_i\}$ with $p'< i \leq n$.
Then $J_K=\Pi_K\setminus\{\beta_{p'},\beta_i\}$
 and $\lie{l}\cap\lie{k}=\lie{l}_{K,J_K}
 \simeq \lie{gl}_{p'}\oplus\lie{gl}_{i-p'}\oplus\lie{so}_{2n-2i+1}$.
We have 
$\Delta(\lie{u}\cap\lie{g}^{-\theta},\lie{t}^\theta)=
\{\alpha_{[k,l]}:k\leq p'\text{\ and\ } i\leq l\}
\cup\{\alpha_{[k,l]}+2\alpha_{[l+1,n]}: k\leq p'\leq l\}$.
Hence $\lie{u}\cap\lie{g}^{-\theta}\simeq 
 (\bbC^{p'}\otimes \bbC^{i-p'})\oplus (\bbC^{p'}\otimes \bbC^{2n-2i+1})$,
which is $(L\cap K)$-spherical if and only if $i=n$.

Suppose that $J=\Pi\setminus\{\alpha_1,\alpha_n\}$.
Then $J_K=\Pi_K\setminus\{\beta_1,\beta_{p'},\beta_n\}$
 and $\lie{l}\cap\lie{k}=\lie{l}_{K,J_K}
 \simeq \lie{gl}_{1}\oplus\lie{gl}_{p'-1} \oplus\lie{gl}_{q'}$.
We have
\begin{align*}
&\Delta(\lie{u}\cap\lie{g}^{-\theta},\lie{t}^\theta)=
\{\alpha_{[1,k]}:p'\leq k\}\cup\{\alpha_{[k,n]}:k\leq p'\}
\cup\{\alpha_{[k,l]}+2\alpha_{[l+1,n]}: k\leq p'\leq l\}.
\end{align*}
Hence 
\begin{align*}
&\Lambda^+(\lie{u}\cap\lie{g}^{-\theta})
=\{\alpha_{[1,p']}+2\alpha_{[p'+1,n]},\alpha_{[1,n]},\alpha_{[1,n-1]},
 \alpha_{[2,p']}+2\alpha_{[p'+1,n]},\alpha_{[2,n]}\},\\
&\qquad\qquad=\{\omega_1+\omega_{p'+1},\omega_1,
 \omega_1+\omega_{n-1}-2\omega_n,-\omega_1+\omega_2+\omega_{p'+1},
 -\omega_1+\omega_2\}\\
&\lie{u}\cap\lie{g}^{-\theta}
\simeq \bbC^{q'}\oplus \bbC\oplus (\bbC^{q'})^*
 \oplus(\bbC^{p'-1}\otimes \bbC^{q'})\oplus \bbC^{p'-1},
\end{align*}
which is not $(L\cap K)$-spherical.

Hence $G/P_J\times K/{B_K}$ is of finite type if and only if
 $\Pi\setminus J=\{\alpha_1\}$ or $\{\alpha_n\}$ for $p,q\geq 3$.

Since $p=2$ is a Hermitian case, $q=1$ is the only remaining case.
Let $(\lie{g},\lie{k})=(\lie{so}_{2n+1},\lie{so}_{2n})$
 and $J=\emptyset$.
Then $\lie{l}\cap\lie{k}=\lie{t}^\theta$ and 
$\Delta(\lie{u}\cap\lie{g}^{-\theta},\lie{t}^\theta)
 =\{\alpha_{[i,n]}:1\leq i\leq n-1\}$.
Since the weights in $\lie{u}\cap\lie{g}^{-\theta}$
 are linearly independent, 
 $L\cap K$ acts spherically on $\lie{u}\cap\lie{g}^{-\theta}$.
Hence $G/P_J\times K/{B_K}$ is of finite type for any $J\subset \Pi$.

\subsection{$(\lie{so}_{2n}, \lie{so}_p\oplus\lie{so}_q)$}
\ 

This case can be treated is a way similar to the case
 $(\lie{g},\lie{k})=(\lie{so}_{2n+1},\lie{so}_p\oplus\lie{so}_q)$ above.

\subsection{$(\lie{sp}_{n}, \lie{sp}_p\oplus\lie{sp}_q)$}\label{spsp}
\ 

Let $(\lie{g},\lie{k})=(\lie{sp}_n,\lie{sp}_p\oplus\lie{sp}_q)$ with $p+q=n$. 
We fix $B$, $T$ and simple roots $\alpha_1,\dots,\alpha_n$
 corresponding to the following Vogan diagram.
\begin{align*}
\begin{xy}
\ar@{-} (0,0) *++!D{\alpha_1} *{\circ}="A"; (10,0) 
\ar@{.} (10,0) ; (20,0) 
\ar@{-} (20,0) ; (30,0) *++!D{\alpha_{p-1}} *{\circ}="B"
\ar@{-} "B" ; (40,0) *++!D{\alpha_{p}} *{\bullet}="C"
\ar@{-} "C"; (50,0)*++!D{\alpha_{p+1}}  *{\circ}="D"
\ar@{-} "D"; (60,0) 
\ar@{.} (60,0); (70,0) 
\ar@{-} (70,0); (80,0) *++!D{\alpha_{n-1}}  *{\circ}="E"
\ar@{<=} "E"; (90,0) *++!D{\alpha_n} *{\circ}
\end{xy}
\end{align*}
Let $\beta_i:=\alpha_i$ for $i\neq p$
 and $\beta_p:=2(\alpha_p+\cdots+\alpha_{n-1})+\alpha_n$.
Then $\Pi_K=\{\beta_1,\dots,\beta_n\}$ is a set of simple roots 
 for $K$ and the corresponding Dynkin diagram is 
\begin{align*}
\begin{xy}
\ar@{-} (0,0) *++!D{\beta_1} *{\circ}="A"; (10,0)="B" 
\ar@{.} "B"; (20,0) 
\ar@{-} (20,0); (30,0)  *++!D{\beta_{p-1}} *{\circ}="D"
\ar@{<=} "D"; (40,0)  *++!D{\beta_p} *{\circ}
\ar@{-} (60,0) *++!D{\beta_{p+1}} *{\circ}="F"; (70,0)="G" 
\ar@{.} "G"; (80,0)="H" 
\ar@{-} "H"; (90,0) *++!D{\beta_{n-1}} *{\circ}="I"
\ar@{<=} "I"; (100,0)  *++!D{\beta_n} *{\circ}
\end{xy}
\end{align*}

Suppose that $J=\Pi\setminus\{\alpha_i\}$ with $1\leq i \leq p$.
Then $J_K=\Pi_K\setminus\{\beta_i\}$ and
 $\lie{l}\cap\lie{k}=\lie{l}_{K,J_K}
 \simeq \lie{gl}_{i}\oplus\lie{sp}_{p-i}\oplus\lie{sp}_{q}$.
We have 
$\lie{u}\cap\lie{g}^{-\theta}\simeq \bbC^i\otimes \bbC^{2q}$,
which is $(L\cap K)$-spherical if and only if $i\leq 3$ or $q\leq 2$.

Suppose that $J=\Pi\setminus\{\alpha_i\}$ with $p<i\leq n$.
Then $J_K=\Pi_K\setminus\{\beta_p,\beta_i\}$
 and $\lie{l}\cap\lie{k}=\lie{l}_{K,J_K}
 \simeq \lie{gl}_{p}\oplus\lie{gl}_{i-p}\oplus\lie{sp}_{n-i}$.
We have 
$\lie{u}\cap\lie{g}^{-\theta}
\simeq (\bbC^p\otimes \bbC^{i-p})\oplus (\bbC^{p}\otimes \bbC^{2(n-i)})$,
which is $(L\cap K)$-spherical if and only if
 $i-p=n-i=1$, $p\leq 2$, or $i=n$.

Suppose that $J=\Pi\setminus\{\alpha_i,\alpha_j\}$ with $1\leq i<j\leq p$.
Then $J_K=\Pi_K\setminus\{\beta_i,\beta_j\}$
 and $\lie{l}\cap\lie{k}=\lie{l}_{K,J_K}
 \simeq \lie{gl}_{i}\oplus\lie{gl}_{j-i}\oplus\lie{sp}_{p-j}
 \oplus\lie{sp}_{q}$.
We have 
$\lie{u}\cap\lie{g}^{-\theta}
\simeq (\bbC^i\otimes \bbC^{2q})\oplus (\bbC^{j-i}\otimes \bbC^{2q})$,
which is $(L\cap K)$-spherical if and only if $i=j-i=1$ or $q=1$.

Suppose that $J=\Pi\setminus\{\alpha_i,\alpha_j\}$ with
 $1\leq i\leq p<j\leq n$.
Then $J_K=\Pi_K\setminus\{\beta_i,\beta_p,\beta_j\}$
 and $\lie{l}\cap\lie{k}=\lie{l}_{K,J_K}
 \simeq \lie{gl}_{i}\oplus\lie{gl}_{p-i}\oplus\lie{gl}_{j-p}
 \oplus\lie{sp}_{n-j}$.
We have 
\[
\begin{aligned}
\lie{u}\cap\lie{g}^{-\theta}
\simeq 
&
(\bbC^i\otimes \bbC^{j-p}) 
\oplus (\bbC^i\otimes \bbC^{2(n-j)})
\oplus  (\bbC^i\otimes (\bbC^{j-p})^*) 
\\
&
\quad
\oplus (\bbC^{p-i}\otimes \bbC^{j-p})
\oplus (\bbC^{p-i}\otimes \bbC^{2(n-j)}),
\end{aligned}
\]
which is $(L\cap K)$-spherical if and only if $i=p=1$ or $j=n=p+1$.

Suppose that $J=\Pi\setminus\{\alpha_i,\alpha_j\}$ with $p<i<j\leq n$.
Then $J_K=\Pi_K\setminus\{\beta_p,\beta_i,\beta_j\}$
 and $\lie{l}\cap\lie{k}=\lie{l}_{K,J_K}
 \simeq \lie{gl}_{p}\oplus\lie{gl}_{i-p}\oplus\lie{gl}_{j-i}
 \oplus\lie{sp}_{n-j}$.
We have 
\[\lie{u}\cap\lie{g}^{-\theta}
\simeq (\bbC^p\otimes \bbC^{i-p}) \oplus (\bbC^p\otimes \bbC^{j-i})
\oplus  (\bbC^p\otimes (\bbC^{j-i})^*) \oplus (\bbC^p\otimes \bbC^{2(n-j)})\]
which is $(L\cap K)$-spherical if and only if $p=1$.

From these observations
 we see that for $p,q\geq 3$, 
 $G/P_J\times K/{B_K}$ is of finite type if and only if 
 $\Pi\setminus J=\{\alpha_1\}$, $\{\alpha_2\}$, $\{\alpha_3\}$,
 $\{\alpha_n\}$, or $\{\alpha_1,\alpha_2\}$.
Add to this,
 $\Pi\setminus J=\{\alpha_i\}$ for $1\leq i\leq n$
 are also the cases if $\min\{p,q\}\leq 2$.
For $\min\{p,q\}=1$, $\lie{u}\cap\lie{g}^{-\theta}$ is $(L\cap K)$-spherical
 if $|\Pi\setminus J|=2$.
To prove that these are all the cases we need to check
 the case $p=1$ and $|\Pi\setminus J|=3$.

Suppose that $p=1$ and $J=\Pi\setminus\{\alpha_i,\alpha_j,\alpha_k\}$
 with $1\leq i<j<k\leq n$.
Then $J_K=\Pi_K\setminus\{\beta_1,\beta_i,\beta_j,\beta_k\}$
 and $\lie{l}\cap\lie{k}=\lie{l}_{K,J_K}
 \simeq \lie{gl}_{1}\oplus\lie{gl}_{i-1}\oplus\lie{gl}_{j-i}
 \oplus\lie{gl}_{k-j}\oplus\lie{sp}_{n-j}$.
We have 
\[\lie{u}\cap\lie{g}^{-\theta}
\simeq \bbC^{i-1} \oplus \bbC^{j-i}\oplus \bbC^{k-j} \oplus \bbC^{2(n-k)}
 \oplus (\bbC^{k-j})^* \oplus (\bbC^{j-i})^*,\]
which is not $(L\cap K)$-spherical.

\subsection{$(\lie{g}_{2}, \lie{sl}_2\oplus\lie{sl}_2)$}
\ 

Let $(\lie{g}, \lie{k})=(\lie{g}_2, \lie{sl}_2\oplus\lie{sl}_2)$.
Then the dimension condition is $\dim L\geq \dim G-\dim K-\rank K= 14-6-2=6$.
But this does not hold for a proper parabolic subgroup $P\subset G$.

\subsection{$(\lie{f}_{4}, \lie{sp}_3\oplus\lie{sp}_1)$}
\ 

Let $(\lie{g}, \lie{k})=(\lie{f}_4, \lie{sp}_3\oplus\lie{sp}_1)$.
Then the dimension condition is $\dim L\geq \dim G-\dim K-\rank K= 52-24-4=24$.
But this does not hold for a proper parabolic subgroup $P\subset G$.

\subsection{$(\lie{f}_{4}, \lie{so}_9)$} \label{f4so}
\ 

Let $(\lie{g}, \lie{k})=(\lie{f}_4, \lie{so}_9)$.
We fix $B$, $T$ and simple roots $\alpha_1,\alpha_2,\alpha_3,\alpha_4$
 corresponding to the following Vogan diagram.
\begin{align*}
\begin{xy}
\ar@{-} (0,0) *++!D{\alpha_1} *{\circ}="A";
  (10,0) *++!D{\alpha_2} *{\circ}="B"
\ar@{=>} "B" ; (20,0)  *++!D{\alpha_3} *{\circ}="C"
\ar@{-} "C" ; (30,0) *++!D{\alpha_4} *{\bullet}
\ar@{-} (50,0) *++!D{\beta_1} *{\circ}="E";
  (60,0) *++!D{\beta_2} *{\circ}="F"
\ar@{-} "F" ; (70,0)  *++!D{\beta_3} *{\circ}="G"
\ar@{=>} "G" ; (80,0) *++!D{\beta_4} *{\circ}
\end{xy}
\end{align*}
Let $\beta_1:=\alpha_2+2\alpha_3+2\alpha_4$,
 $\beta_2:=\alpha_1$, $\beta_3:=\alpha_2$, $\beta_4:=\alpha_3$.
Then $\Pi_K=\{\beta_1,\beta_2,\beta_3,\beta_4\}$ is a set of simple roots 
 for $K$ and the corresponding Dynkin diagram is given as above.

The dimension condition is $\dim L\geq \dim G-\dim K-\rank K= 52-36-4=12$.
This holds for $J=\Pi\setminus\{\alpha_i\}$ with $1\leq i\leq 4$
 or $J=\Pi\setminus\{\alpha_1,\alpha_4\}$.

Suppose that $J=\Pi\setminus\{\alpha_1,\alpha_4\}$.
Then $J_K=\Pi_K\setminus\{\beta_1,\beta_2\}$ and
 $\lie{l}\cap\lie{k}=\lie{l}_{K,J_K}
 \simeq \lie{gl}_1\oplus\lie{gl}_1\oplus\lie{so}_{5}
 (\simeq \lie{gl}_1\oplus\lie{gl}_1\oplus\lie{sp}_{2})$.
We have $\Delta(\lie{u}\cap\lie{g}^{-\theta},\lie{t}^{\theta})
=\{\sum_{i=1}^4 m_i\alpha_i\in\Delta^+:m_4=1\}$. Hence
\begin{align*}
&\Lambda^+(\lie{u}\cap\lie{g}^{-\theta})=
\{\alpha_2+2\alpha_3+\alpha_4,\alpha_1+2\alpha_2+3\alpha_3+\alpha_4\}
=\{\omega_1-\omega_2+\omega_4,\omega_4\}
\end{align*}
and $\lie{u}\cap\lie{g}^{-\theta} \simeq \bbC^4\oplus \bbC^4$,
which is $(L\cap K)$-spherical.

Suppose that $J=\Pi\setminus\{\alpha_2\}$.
Then $J_K=\Pi_K\setminus\{\beta_1,\beta_3\}$
 and $\lie{l}\cap\lie{k}=\lie{l}_{K,J_K}
 \simeq \lie{gl}_2\oplus\lie{gl}_2$.
We have  $\Delta(\lie{u}\cap\lie{g}^{-\theta},\lie{t}^{\theta})
=\{\sum_{i=1}^4 m_i\alpha_i\in\Delta^+:m_2>0,\ m_4=1\}$.
Hence
\begin{align*}
&\Lambda^+(\lie{u}\cap\lie{g}^{-\theta})=
\{\alpha_1+\alpha_2+2\alpha_3+\alpha_4,\alpha_1+2\alpha_2+3\alpha_3+\alpha_4\}
=\{\omega_2-\omega_3+\omega_4,\omega_4\}
\end{align*}
and
$\lie{u}\cap\lie{g}^{-\theta} \simeq(\bbC^2\otimes \bbC^2)\oplus \bbC^2$,
which is $(L\cap K)$-spherical.

Suppose that $J=\Pi\setminus\{\alpha_3\}$.
Then $J_K=\Pi_K\setminus\{\beta_1,\beta_4\}$
 and $\lie{l}\cap\lie{k}=\lie{l}_{K,J_K}
 \simeq \lie{gl}_1\oplus\lie{gl}_3$.
We have  $\Delta(\lie{u}\cap\lie{g}^{-\theta},\lie{t}^{\theta})
=\{\sum_{i=1}^4 m_i\alpha_i\in\Delta^+:m_3>0,\ m_4=1\}$.
Hence
\begin{align*}
&\Lambda^+(\lie{u}\cap\lie{g}^{-\theta})=
\{\alpha_1+\alpha_2+\alpha_3+\alpha_4,\alpha_1+2\alpha_2+2\alpha_3+\alpha_4,
\alpha_1+2\alpha_2+3\alpha_3+\alpha_4\}\\
&\qquad\qquad\quad=\{\omega_2-\omega_4,\omega_3-\omega_4,\omega_4\}
\end{align*}
and $\lie{u}\cap\lie{g}^{-\theta} \simeq\bbC^3\oplus (\bbC^3)^*\oplus \bbC$,
which is $(L\cap K)$-spherical.

Hence $G/P_J\times K/{B_K}$ is of finite type if and only if
 $|\Pi\setminus J|=1$ or $\Pi\setminus J=\{\alpha_1,\alpha_4\}$.

\subsection{$(\lie{e}_6, \lie{sp}_4)$}
\ 

Let $(\lie{g}, \lie{k})=(\lie{e}_6, \lie{sp}_4)$.
We fix $B$, $T$ and simple roots $\alpha_1,\dots,\alpha_6$
 corresponding to the following Vogan diagram.
\begin{align*}
\begin{xy}
\ar@{-} (0,0) *++!D{\alpha_1} *{\circ}="A"; (10,0) *++!D{\alpha_3} 
 *{\circ}="B"
\ar@{-} "B"; (20,0)*++!U{\alpha_4}  *{\circ}="C"
\ar@{-} "C"; (30,0) *++!D{\alpha_5}  *{\circ}="D"
\ar@{-} "C"; (20,10) *++!D{\alpha_2}  *{\bullet}="G"
\ar@{-} "D"; (40,0) *++!D{\alpha_6}  *{\circ}="E"
\ar@/_1.2pc/@{<->} "B"; "D"
\ar@/_1.8pc/@{<->} "A"; "E"
\ar@{-} (60,0) *++!D{\beta_1} *{\circ}="F"; (70,0) *++!D{\beta_2} 
 *{\circ}="G"
\ar@{-} "G"; (80,0)*++!D{\beta_3}  *{\circ}="H"
\ar@{<=} "H"; (90,0) *++!D{\beta_4}  *{\circ}="I"
\end{xy}
\end{align*}
Let $\beta_1:=(\alpha_2+\alpha_3+\alpha_4)|_{\lie{t}^\theta}$,
 $\beta_2:=\alpha_1|_{\lie{t}^\theta}$, $\beta_3:=\alpha_3|_{\lie{t}^\theta}$,
 and $\beta_4:=\alpha_4|_{\lie{t}^\theta}$.
Then $\Pi_K=\{\beta_1,\beta_2,\beta_3,\beta_4\}$ is a set of simple roots 
 for $K$ and the corresponding Dynkin diagram is given as above.

The dimension condition is $\dim L\geq \dim G-\dim K-\rank K= 78-36-4=38$.
This holds only if $\Pi\setminus J=\{\alpha_1\}$ or $\{\alpha_6\}$.

Suppose that $J=\Pi\setminus\{\alpha_1\}$.
Then $J_K=\Pi_K\setminus\{\beta_2\}$ and
 $\lie{l}\cap\lie{k}=\lie{l}_{K,J_K}\simeq \lie{gl}_2\oplus\lie{sp}_{2}
 (\simeq \lie{gl}_2\oplus\lie{so}_{5})$.
We have 
\begin{align*}
\Delta(\lie{u}\cap\lie{g}^{-\theta},\lie{t}^{\theta})
&=\{
(\alpha_1+\alpha_2+\alpha_3+\alpha_4+\alpha_5+\alpha_6)|_{\lie{t}^\theta}, 
(\alpha_1+\alpha_2+\alpha_3+2\alpha_4+\alpha_5+\alpha_6)|_{\lie{t}^\theta},\\
&(\alpha_1+\alpha_2+2\alpha_3+2\alpha_4+\alpha_5+\alpha_6)|_{\lie{t}^\theta},
(\alpha_1+\alpha_2+2\alpha_3+2\alpha_4+2\alpha_5+\alpha_6)
 |_{\lie{t}^\theta},\\
&(\alpha_1+\alpha_2+2\alpha_3+3\alpha_4+2\alpha_5+\alpha_6)|_{\lie{t}^\theta}
\}.
\end{align*}
Hence $\Lambda^+(\lie{u}\cap\lie{g}^{-\theta})
 =\{(\alpha_1+\alpha_2+2\alpha_3+3\alpha_4+2\alpha_5+\alpha_6)
 |_{\lie{t}^\theta}\}=\{\omega_4\}$
 and $\lie{u}\cap\lie{g}^{-\theta} \simeq \bbC^5$,
which is $(L\cap K)$-spherical.

Therefore, $G/P_J\times K/{B_K}$ is of finite type if and only if
 $\Pi\setminus J=\{\alpha_1\}$ or $\{\alpha_6\}$.

\subsection{$(\lie{e}_6, \lie{sl}_6 \oplus\lie{sl}_2)$} \label{e6sl}
\ 

Let $(\lie{g}, \lie{k})=(\lie{e}_6, \lie{sl}_6\oplus\lie{sl}_2)$.
We fix $B$, $T$ and simple roots $\alpha_1,\dots,\alpha_6$
 corresponding to the following Vogan diagram.
\begin{align*}
\begin{xy}
\ar@{-} (0,0) *++!D{\alpha_1} *{\circ}="A"; (10,0) *++!D{\alpha_3} 
 *{\circ}="B"
\ar@{-} "B"; (20,0)*++!U{\alpha_4}  *{\circ}="C"
\ar@{-} "C"; (30,0) *++!D{\alpha_5}  *{\circ}="D"
\ar@{-} "C"; (20,10) *++!D{\alpha_2}  *{\bullet}="G"
\ar@{-} "D"; (40,0) *++!D{\alpha_6}  *{\circ}="E"
\ar@{-} (60,0) *++!D{\beta_1} *{\circ}="F"; (70,0) *++!D{\beta_2} 
 *{\circ}="G"
\ar@{-} "G"; (80,0)*++!D{\beta_3}  *{\circ}="H"
\ar@{-} "H"; (90,0)*++!D{\beta_4}  *{\circ}="I"
\ar@{-} "I"; (100,0)*++!D{\beta_5}  *{\circ}="J"
\ar@{} "J"; (110,0) *++!D{\beta_6} *{\circ}
\end{xy}
\end{align*}
Let $\beta_1:=\alpha_1$, $\beta_2:=\alpha_3$, $\beta_3:=\alpha_4$,
 $\beta_4:=\alpha_5$, $\beta_5:=\alpha_6$, and
 $\beta_6:=\alpha_1+2\alpha_2+2\alpha_3+3\alpha_4+2\alpha_5+\alpha_6$.
Then $\Pi_K=\{\beta_1,\dots,\beta_6\}$ is a set of simple roots 
 for $K$ and the corresponding Dynkin diagram is given as above.

The dimension condition is $\dim L\geq \dim G-\dim K-\rank K= 78-38-6=34$.
This holds only if $\Pi\setminus J=\{\alpha_1\}$, $\{\alpha_2\}$
 or $\{\alpha_6\}$.

Suppose that $J=\Pi\setminus\{\alpha_1\}$.
Then $J_K=\Pi_K\setminus\{\beta_1,\beta_6\}$ and
 $\lie{l}\cap\lie{k}=\lie{l}_{K,J_K}\simeq \lie{gl}_5\oplus\lie{gl}_1$.
We have 
$\Delta(\lie{u}\cap\lie{g}^{-\theta},\lie{t}^{\theta})
=\{\sum_{i=1}^6 m_i\alpha_i\in\Delta^+:m_1=m_2=1\}$.
Hence $\Lambda^+(\lie{u}\cap\lie{g}^{-\theta})
 =\{\alpha_1+\alpha_2+2\alpha_3+3\alpha_4+2\alpha_5+\alpha_6\}
 =\{\omega_3+\omega_6\}$
and 
$\lie{u}\cap\lie{g}^{-\theta}\simeq\bigwedge^2 \bbC^5$,
which is $(L\cap K)$-spherical.

Suppose that $J=\Pi\setminus\{\alpha_2\}$.
Then $J_K=\Pi_K\setminus\{\beta_6\}$ and
 $\lie{l}\cap\lie{k}=\lie{l}_{K,J_K}\simeq \lie{sl}_6\oplus\lie{gl}_1$.
We have 
$\Delta(\lie{u}\cap\lie{g}^{-\theta},\lie{t}^{\theta})
=\{\sum_{i=1}^6 m_i\alpha_i\in\Delta^+:m_2=1\}$ and
$\lie{u}\cap\lie{g}^{-\theta}\simeq \bigwedge^3 \bbC^6$,
which is not $(L\cap K)$-spherical.

Therefore, $G/P_J\times K/{B_K}$ is of finite type if and only if
 $\Pi\setminus J=\{\alpha_1\}$ or $\{\alpha_6\}$.

\subsection{$(\lie{e}_6, \lie{f}_4)$} \label{e6f4}
\ 

Let $(\lie{g}, \lie{k})=(\lie{e}_6, \lie{f}_4)$.
We fix $B$, $T$ and simple roots $\alpha_1,\dots,\alpha_6$
 corresponding to the following Vogan diagram.
\begin{align*}
\begin{xy}
\ar@{-} (0,0) *++!D{\alpha_1} *{\circ}="A"; (10,0) *++!D{\alpha_3} 
 *{\circ}="B"
\ar@{-} "B"; (20,0)*++!U{\alpha_4}  *{\circ}="C"
\ar@{-} "C"; (30,0) *++!D{\alpha_5}  *{\circ}="D"
\ar@{-} "C"; (20,10) *++!D{\alpha_2}  *{\circ}="G"
\ar@{-} "D"; (40,0) *++!D{\alpha_6}  *{\circ}="E"
\ar@/_1.2pc/@{<->} "B"; "D"
\ar@/_1.8pc/@{<->} "A"; "E"
\ar@{-} (60,0) *++!D{\beta_1} *{\circ}="F";
  (70,0) *++!D{\beta_2} *{\circ}="G"
\ar@{=>} "G" ; (80,0)  *++!D{\beta_3} *{\circ}="H"
\ar@{-} "H" ; (90,0) *++!D{\beta_4} *{\circ}
\end{xy}
\end{align*}
Let $\beta_1:=\alpha_2|_{\lie{t}^\theta}$,
 $\beta_2:=\alpha_4|_{\lie{t}^\theta}$, $\beta_3:=\alpha_3|_{\lie{t}^\theta}$,
 and $\beta_4:=\alpha_1|_{\lie{t}^\theta}$.
Then $\Pi_K=\{\beta_1,\beta_2,\beta_3,\beta_4\}$ is a set of simple roots 
 for $K$ and the corresponding Dynkin diagram is given as above.

The dimension condition is $\dim L\geq \dim G-\dim K-\rank K= 78-52-4=22$.
This is satisfied for $\Pi\setminus J= \{\alpha_i\} (i\neq 4)$, 
 $\{\alpha_1, \alpha_2\}$, $\{\alpha_2, \alpha_6\}$,
 $\{\alpha_1, \alpha_3\}$, $\{\alpha_5, \alpha_6\}$,
 $\{\alpha_1, \alpha_6\}$.

Suppose that $J=\Pi\setminus\{\alpha_1,\alpha_2\}$.
Then $J_K=\Pi_K\setminus\{\beta_1,\beta_4\}$ and
 $\lie{l}\cap\lie{k}=\lie{l}_{K,J_K}\simeq
 \lie{gl}_1\oplus\lie{sp}_2\oplus\lie{gl}_1$.
We have 
\begin{align*}
\Delta(\lie{u}\cap\lie{g}^{-\theta},\lie{t}^{\theta})
&=\{(\alpha_2+\alpha_3+\alpha_4)|_{\lie{t}^\theta}, 
(\alpha_1+\alpha_2+\alpha_3+\alpha_4)|_{\lie{t}^\theta},
\\
&
(\alpha_1+\alpha_2+\alpha_3+\alpha_4+\alpha_5)|_{\lie{t}^\theta},
(\alpha_1+\alpha_2+\alpha_3+2\alpha_4+\alpha_5)|_{\lie{t}^\theta},
\\
&
(\alpha_1+\alpha_2+2\alpha_3+2\alpha_4+\alpha_5)|_{\lie{t}^\theta},
(\alpha_1+\alpha_2+2\alpha_3+2\alpha_4+\alpha_5+\alpha_6)|_{\lie{t}^\theta}
\}.
\end{align*}
Hence 
\begin{align*}
\Lambda^+(\lie{u}\cap\lie{g}^{-\theta})
&=\{(\alpha_2+\alpha_3+\alpha_4)|_{\lie{t}^\theta},
 (\alpha_1+\alpha_2+2\alpha_3+2\alpha_4+\alpha_5)|_{\lie{t}^\theta},
\\
&\qquad
 (\alpha_1+\alpha_2+2\alpha_3+2\alpha_4+\alpha_5+\alpha_6)|_{\lie{t}^\theta}\}
\\
&=\{\omega_1-\omega_4,\omega_3-\omega_4,\omega_4\}
\end{align*}
and $\lie{u}\cap\lie{g}^{-\theta} \simeq \bbC\oplus\bbC^4\oplus \bbC$,
which is $(L\cap K)$-spherical.

Suppose that $J=\Pi\setminus\{\alpha_1,\alpha_3\}$.
Then $J_K=\Pi_K\setminus\{\beta_3,\beta_4\}$ and
 $\lie{l}\cap\lie{k}=\lie{l}_{K,J_K}\simeq \lie{gl}_3\oplus\lie{gl}_1$.
We have 
\begin{align*}
\Delta(\lie{u}\cap\lie{g}^{-\theta},\lie{t}^{\theta})
&=\{(\alpha_1+\alpha_3+\alpha_4+\alpha_5)|_{\lie{t}^\theta}, 
(\alpha_1+\alpha_2+\alpha_3+\alpha_4+\alpha_5)|_{\lie{t}^\theta},
\\
&
(\alpha_1+\alpha_2+\alpha_3+2\alpha_4+\alpha_5)|_{\lie{t}^\theta},
(\alpha_1+\alpha_2+2\alpha_3+2\alpha_4+\alpha_5)|_{\lie{t}^\theta},
\\
&
(\alpha_1+\alpha_2+2\alpha_3+2\alpha_4+\alpha_5+\alpha_6)|_{\lie{t}^\theta}
\}.
\end{align*}
Hence 
\begin{align*}
\Lambda^+(\lie{u}\cap\lie{g}^{-\theta})
&=\{(\alpha_1+\alpha_2+\alpha_3+2\alpha_4+\alpha_5)|_{\lie{t}^\theta},
 (\alpha_1+\alpha_2+2\alpha_3+2\alpha_4+\alpha_5)|_{\lie{t}^\theta},\\
&\qquad
 (\alpha_1+\alpha_2+2\alpha_3+2\alpha_4+\alpha_5+\alpha_6)|_{\lie{t}^\theta}\}
\\
&=\{\omega_2-\omega_3,\omega_3-\omega_4,\omega_4\}
\end{align*}
and 
$\lie{u}\cap\lie{g}^{-\theta} \simeq (\bbC^3)^* \oplus \bbC \oplus \bbC$,
which is $(L\cap K)$-spherical.

Suppose that $J=\Pi\setminus\{\alpha_1,\alpha_6\}$.
Then $J_K=\Pi_K\setminus\{\beta_4\}$ and
 $\lie{l}\cap\lie{k}=\lie{l}_{K,J_K}\simeq \lie{so}_7\oplus\lie{gl}_1$.
We have
\begin{align*}
\Lambda^+(\lie{u}\cap\lie{g}^{-\theta})
&=\{(\alpha_1+\alpha_2+2\alpha_3+2\alpha_4+\alpha_5)|_{\lie{t}^\theta},
 (\alpha_1+\alpha_2+2\alpha_3+2\alpha_4+\alpha_5+\alpha_6)|_{\lie{t}^\theta}\}
\\
&=\{\omega_3-\omega_4,\omega_4\}
\end{align*} 
and
$\lie{u}\cap\lie{g}^{-\theta}\simeq \bbC^8 \oplus \bbC$,
which is not $(L\cap K)$-spherical.

Hence $G/P_J\times K/{B_K}$ is of finite type if and only if
 $\Pi\setminus J= \{\alpha_i\} (i\neq 4)$, 
 $\{\alpha_1, \alpha_2\}$, $\{\alpha_2, \alpha_6\}$, 
 $\{\alpha_1, \alpha_3\}$, or $\{\alpha_5, \alpha_6\}$.

\subsection{$(\lie{e}_7, \lie{sl}_8)$}
\ 

Let $(\lie{g}, \lie{k})=(\lie{e}_7, \lie{sl}_8)$.
We fix $B$, $T$ and simple roots $\alpha_1,\dots,\alpha_7$
 corresponding to the following Vogan diagram.
\begin{align*}
\begin{xy}
\ar@{-} (0,0) *++!D{\alpha_1} *{\circ}="A"; (10,0) *++!D{\alpha_3} 
 *{\circ}="B"
\ar@{-} "B"; (20,0)*++!U{\alpha_4}  *{\circ}="C"
\ar@{-} "C"; (30,0) *++!D{\alpha_5}  *{\circ}="D"
\ar@{-} "C"; (20,10) *++!D{\alpha_2}  *{\bullet}
\ar@{-} "D"; (40,0) *++!D{\alpha_6}  *{\circ}="E"
\ar@{-} "E"; (50,0) *++!D{\alpha_7}  *{\circ}
\ar@{-} (70,0) *++!D{\beta_1} *{\circ}="F";
  (80,0) *++!D{\beta_2} *{\circ}="G"
\ar@{-} "G" ; (90,0) 
\ar@{.} (90,0) ; (100,0) 
\ar@{-} (100,0) ; (110,0) *++!D{\beta_7} *{\circ}
\end{xy}
\end{align*}
Let $\beta_1:=\alpha_1$,
 $\beta_i:=\alpha_{i+1}$ for $2\leq i\leq 6$,
 and $\beta_7:=\alpha_1+2\alpha_2+2\alpha_3+3\alpha_4+2\alpha_5+\alpha_6$.
Then $\Pi_K=\{\beta_1,\dots,\beta_7\}$ is a set of simple roots 
 for $K$ and the corresponding Dynkin diagram is given as above.

The dimension condition is $\dim L\geq \dim G-\dim K-\rank K= 133-63-7=63$,
 which implies $\Pi\setminus J= \{\alpha_1\}$ or $\{\alpha_7\}$.

Suppose that $J=\Pi\setminus\{\alpha_1\}$.
Then $J_K=\Pi_K\setminus\{\beta_1,\beta_7\}$ and
 $\lie{l}\cap\lie{k}=\lie{l}_{K,J_K}\simeq
 \lie{gl}_1\oplus\lie{sl}_6\oplus\lie{gl}_1$.
We have 
$\Delta(\lie{u}\cap\lie{g}^{-\theta},\lie{t}^{\theta})
=\{\sum_{i=1}^7 m_i\alpha_i\in\Delta^+:m_1=m_2=1\}$.
Hence 
\begin{align*}
\Lambda^+(\lie{u}\cap\lie{g}^{-\theta})
=\{\alpha_1+\alpha_2+2\alpha_3+3\alpha_4+3\alpha_5+2\alpha_6+\alpha_7\}
=\{\omega_4\}
\end{align*}
and 
$\lie{u}\cap\lie{g}^{-\theta} \simeq \bigwedge^3 \bbC^6$,
which is not $(L\cap K)$-spherical.

Suppose that $J=\Pi\setminus\{\alpha_7\}$.
Then $J_K=\Pi_K\setminus\{\beta_6\}$ and
 $\lie{l}\cap\lie{k}=\lie{l}_{K,J_K}\simeq
 \lie{sl}_6\oplus\lie{gl}_1\oplus\lie{sl}_2$.
We have $\Delta(\lie{u}\cap\lie{g}^{-\theta},\lie{t}^{\theta})
=\{\sum_{i=1}^7 m_i\alpha_i\in\Delta^+:m_2=m_7=1\}$.
Hence 
$\Lambda^+(\lie{u}\cap\lie{g}^{-\theta})=\{\omega_4\}$
and 
$\lie{u}\cap\lie{g}^{-\theta} \simeq \bigwedge^2 \bbC^6$,
which is $(L\cap K)$-spherical.

Therefore, $G/P_J\times K/{B_K}$ is of finite type if and only if
 $\Pi\setminus J= \{\alpha_7\}$.

\subsection{$(\lie{e}_7, \lie{so}_{12}\oplus\lie{sl}_2)$} \label{e7so}
\ 

Let $(\lie{g}, \lie{k})=(\lie{e}_7, \lie{so}_{12}\oplus\lie{sl}_2)$.
We fix $B$, $T$ and simple roots $\alpha_1,\dots,\alpha_7$
 corresponding to the following Vogan diagram.
\begin{align*}
\begin{xy}
\ar@{-} (0,0) *++!D{\alpha_1} *{\bullet}="A"; (10,0) *++!D{\alpha_3} 
 *{\circ}="B"
\ar@{-} "B"; (20,0)*++!U{\alpha_4}  *{\circ}="C"
\ar@{-} "C"; (30,0) *++!D{\alpha_5}  *{\circ}="D"
\ar@{-} "C"; (20,10) *++!D{\alpha_2}  *{\circ}
\ar@{-} "D"; (40,0) *++!D{\alpha_6}  *{\circ}="E"
\ar@{-} "E"; (50,0) *++!D{\alpha_7}  *{\circ}
\ar@{-} (70,0) *++!D{\beta_1} *{\circ}="F"; (80,0) *++!D{\beta_2} 
 *{\circ}="H"
\ar@{-} "H" ; (90,0)  *++!D{\beta_3}  *{\circ}="I"
\ar@{-} "I" ; (100,0) *+!DR{\beta_4} *{\circ}="G" 
\ar@{-} "G" ; (105,8.6)  *+!L{\beta_5} *{\circ}
\ar@{-} "G" ; (105,-8.6)  *+!L{\beta_6} *{\circ}
\ar@{} "G" ; (120,0) *++!D{\beta_7} *{\circ}
\end{xy}
\end{align*}
Let $\beta_i:=\alpha_{8-i}$ for $1\leq i\leq 6$,
 and $\beta_7:=2\alpha_1+2\alpha_2+3\alpha_3
 +4\alpha_4+3\alpha_5+2\alpha_6+\alpha_7$.
Then $\Pi_K=\{\beta_1,\dots,\beta_7\}$ is a set of simple roots 
 for $K$ and the corresponding Dynkin diagram is given as above.

The dimension condition is $\dim L\geq \dim G-\dim K-\rank K= 133-69-7=57$,
 which implies $\Pi\setminus J= \{\alpha_1\}$ or $\{\alpha_7\}$.

Suppose that $J=\Pi\setminus\{\alpha_1\}$.
Then $J_K=\Pi_K\setminus\{\beta_7\}$ and
 $\lie{l}\cap\lie{k}=\lie{l}_{K,J_K}\simeq
 \lie{so}_{12}\oplus\lie{gl}_1$.
We have $\Delta(\lie{u}\cap\lie{g}^{-\theta},\lie{t}^{\theta})
=\{\sum_{i=1}^7 m_i\alpha_i\in\Delta^+:m_1=1\}$.
Hence 
\begin{align*}
\Lambda^+(\lie{u}\cap\lie{g}^{-\theta})
=\{\alpha_1+2\alpha_2+3\alpha_3+4\alpha_4+3\alpha_5+2\alpha_6+\alpha_7\}
=\{\omega_5+\omega_7\}
\end{align*}
and 
$\lie{u}\cap\lie{g}^{-\theta} \simeq \bbC^{32}$,
which is not $(L\cap K)$-spherical.

Suppose that $J=\Pi\setminus\{\alpha_7\}$.
Then $J_K=\Pi_K\setminus\{\beta_1,\beta_7\}$ and
 $\lie{l}\cap\lie{k}=\lie{l}_{K,J_K}\simeq
 \lie{gl}_1\oplus\lie{so}_{10}\oplus\lie{gl}_1$.
We have $\Delta(\lie{u}\cap\lie{g}^{-\theta},\lie{t}^{\theta})
=\{\sum_{i=1}^7 m_i\alpha_i\in\Delta^+:m_1=m_7=1\}$.
Hence
$\Lambda^+(\lie{u}\cap\lie{g}^{-\theta})=\{\omega_5+\omega_7\}$
and 
$\lie{u}\cap\lie{g}^{-\theta} \simeq \bbC^{16}$,
which is $(L\cap K)$-spherical.

Therefore, $G/P_J\times K/{B_K}$ is of finite type if and only if
 $\Pi\setminus J= \{\alpha_7\}$.

\subsection{$(\lie{e}_8, \lie{so}_{16})$}
\ 

Let $(\lie{g}, \lie{k})=(\lie{e}_8, \lie{so}_{16})$.
We fix $B$, $T$ and simple roots $\alpha_1,\dots,\alpha_8$
 corresponding to the following Vogan diagram.
\begin{align*}
\begin{xy}
\ar@{-} (0,0) *++!D{\alpha_1} *{\bullet}="A"; (10,0) *++!D{\alpha_3} 
 *{\circ}="B"
\ar@{-} "B"; (20,0)*++!U{\alpha_4}  *{\circ}="C"
\ar@{-} "C"; (30,0) *++!D{\alpha_5}  *{\circ}="D"
\ar@{-} "C"; (20,10) *++!D{\alpha_2}  *{\circ}
\ar@{-} "D"; (40,0) *++!D{\alpha_6}  *{\circ}="E"
\ar@{-} "E"; (50,0) *++!D{\alpha_7}  *{\circ}="F"
\ar@{-} "F"; (60,0) *++!D{\alpha_8}  *{\circ}
\ar@{-} (80,0) *++!D{\beta_1} *{\circ}; (90,0) 
\ar@{.} (90,0); (100,0)
\ar@{-} (100,0) ; (110,0) *+!DR{\beta_6} *{\circ}="G" 
\ar@{-} "G" ; (115,8.6)  *+!L{\beta_7} *{\circ}
\ar@{-} "G" ; (115,-8.6)  *+!L{\beta_8} *{\circ}
\end{xy}
\end{align*}
Let $\beta_1:=2\alpha_1+2\alpha_2+3\alpha_3+4\alpha_4+3\alpha_5
 +2\alpha_6+\alpha_7$
 and $\beta_i:=\alpha_{10-i}$ for $2\leq i\leq 8$.
Then $\Pi_K=\{\beta_1,\dots,\beta_8\}$ is a set of simple roots 
 for $K$ and the corresponding Dynkin diagram is given as above.

The dimension condition is $\dim L\geq \dim G-\dim K-\rank K= 248-120-8=120$,
 which implies $\Pi\setminus J= \{\alpha_8\}$.

Suppose that $J=\Pi\setminus\{\alpha_8\}$.
Then $J_K=\Pi_K\setminus\{\beta_2\}$ and
 $\lie{l}\cap\lie{k}=\lie{l}_{K,J_K}\simeq
 \lie{gl}_2\oplus\lie{so}_{12}$.
We have $\Delta(\lie{u}\cap\lie{g}^{-\theta},\lie{t}^{\theta})
=\{\sum_{i=1}^8 m_i\alpha_i\in\Delta^+:m_1=m_8=1\}$, 
\begin{align*}
\Lambda^+(\lie{u}\cap\lie{g}^{-\theta})
=\{\alpha_1+3\alpha_2+3\alpha_3+5\alpha_4+4\alpha_5+3\alpha_6+2\alpha_7
 +\alpha_8\}=\{\omega_8\},
\end{align*}
and 
$\lie{u}\cap\lie{g}^{-\theta} \simeq \bbC^{32}$,
which is not $(L\cap K)$-spherical.

Hence there is no $G/P_J\times K/{B_K}$ of finite type.

\subsection{$(\lie{e}_8, \lie{e}_{7}\oplus\lie{sl}_2)$}
\ 

Let $(\lie{g}, \lie{k})=(\lie{e}_8, \lie{e}_{7}\oplus\lie{sl}_2)$.
We fix $B$, $T$ and simple roots $\alpha_1,\dots,\alpha_8$
 corresponding to the following Vogan diagram.
\begin{align*}
\begin{xy}
\ar@{-} (0,0) *++!D{\alpha_1} *{\circ}="A"; (10,0) *++!D{\alpha_3} 
 *{\circ}="B"
\ar@{-} "B"; (20,0)*++!U{\alpha_4}  *{\circ}="C"
\ar@{-} "C"; (30,0) *++!D{\alpha_5}  *{\circ}="D"
\ar@{-} "C"; (20,10) *++!D{\alpha_2}  *{\circ}
\ar@{-} "D"; (40,0) *++!D{\alpha_6}  *{\circ}="E"
\ar@{-} "E"; (50,0) *++!D{\alpha_7}  *{\circ}="F"
\ar@{-} "F"; (60,0) *++!D{\alpha_8}  *{\bullet}
\ar@{-} (80,0) *++!D{\beta_1} *{\circ}="G"; (90,0) *++!D{\beta_3} 
 *{\circ}="H"
\ar@{-} "H"; (100,0)*++!U{\beta_4}  *{\circ}="I"
\ar@{-} "I"; (110,0) *++!D{\beta_5}  *{\circ}="J"
\ar@{-} "I"; (100,10) *++!D{\beta_2}  *{\circ}
\ar@{-} "J"; (120,0) *++!D{\beta_6}  *{\circ}="K"
\ar@{-} "K"; (130,0) *++!D{\beta_7}  *{\circ}="L"
\ar@{} "L"; (140,0) *++!D{\beta_8}  *{\circ}
\end{xy}
\end{align*}
Let $\beta_i:=\alpha_i$ for $1\leq i\leq 7$ and
 $\beta_8:=2\alpha_1+3\alpha_2+4\alpha_3+6\alpha_4+5\alpha_5
 +4\alpha_6+3\alpha_7+2\alpha_8$.
Then $\Pi_K=\{\beta_1,\dots,\beta_8\}$ is a set of simple roots 
 for $K$ and the corresponding Dynkin diagram is given as above.

The dimension condition is $\dim L\geq \dim G-\dim K-\rank K= 248-136-8=104$,
 which implies $\Pi\setminus J= \{\alpha_8\}$.

Suppose that $J=\Pi\setminus\{\alpha_8\}$.
Then $J_K=\Pi_K\setminus\{\beta_8\}$ and
 $\lie{l}\cap\lie{k}=\lie{l}_{K,J_K}\simeq
 \lie{e}_7\oplus\lie{gl}_1$.
We have $\Delta(\lie{u}\cap\lie{g}^{-\theta},\lie{t}^{\theta})
=\{\sum_{i=1}^8 m_i\alpha_i\in\Delta^+:m_8=1\}$,
\begin{align*}
\Lambda^+(\lie{u}\cap\lie{g}^{-\theta})
=\{2\alpha_1+3\alpha_2+4\alpha_3+6\alpha_4+5\alpha_5+4\alpha_6+3\alpha_7
 +\alpha_8\}=\{\omega_7+\omega_8\},
\end{align*}
and 
$\lie{u}\cap\lie{g}^{-\theta} \simeq \bbC^{56}$,
 which is not $(L\cap K)$-spherical.

Hence there is no $G/P_J\times K/{B_K}$ of finite type.

\bigskip

We thus conclude that:

\begin{theorem}
\label{theorem:k.spherical}
Let $G$ be a connected simple algebraic group
 and $(G,K)$ a symmetric pair.
Let $P$ be a parabolic subgroup of $G$
 corresponding to $J \subsetneq \Pi$.
Then the double flag variety $ G/P \times K/B_K $
 is of finite type 
 if and only if the triple
 $(\lie{g}, \lie{k}, \Pi\setminus J)$
 appears in Table~\ref{table:k.spherical}.
\end{theorem}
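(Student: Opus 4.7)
The plan is to use Theorem~\ref{thm:finiteness.iff.MFaction} to convert the finiteness question into a question about sphericity of a linear action, then to run a case-by-case classification over the simple symmetric pairs $(G,K)$. By Theorem~\ref{thm:finiteness.iff.MFaction}~\eqref{item:finiteness.iff.MFaction.BK}, the double flag variety $G/P_J \times K/B_K$ is of finite type if and only if the adjoint action of $L_J \cap K$ on $\lie{u}_{P_J} \cap \lie{g}^{-\theta}$ is spherical. Since the classification of spherical linear actions of connected reductive groups is known (irreducible case by Kac~\cite{Kac.1980}, reducible case by Benson--Ratcliff~\cite{Benson.Ratcliff.1996} and Leahy~\cite{Leahy.1998}), it suffices to describe the $(L_J \cap K)$-module structure of $\lie{u}_{P_J} \cap \lie{g}^{-\theta}$ explicitly for every $J \subsetneq \Pi$ and compare with the available tables.

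The approach for each simple symmetric pair $(G,K)$ will be as follows. First, I fix a $\theta$-stable Borel subgroup $B$ and a $\theta$-stable maximal torus $T \subset B$, and draw the corresponding Vogan diagram; the simple roots $\Pi$ of $\lie{g}$ and $\Pi_K$ of $\lie{k}$ will be read off from it, together with the map $\alpha \mapsto \alpha|_{\lie{t}^\theta}$ relating weights of $T$ to those of $T_K$. Given $J \subset \Pi$, the subset $J_K \subset \Pi_K$ defining the standard Levi subgroup $L_J \cap K = L_{K, J_K}$ is determined by the rule that $\beta_i \in J_K$ exactly when there is $\alpha \in \Delta^+$ with $\alpha|_{\lie{t}^\theta} = \beta_i$ and $\alpha,\theta\alpha \in \Delta(\lie{l}_J,\lie{t})$. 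Next, I compute the set of $T^\theta$-weights $\Delta(\lie{u}_{P_J}\cap \lie{g}^{-\theta},\lie{t}^\theta)$ by selecting the $\alpha \in \Delta^+$ either with $\alpha = \theta\alpha \in \Delta(\lie{u}_{P_J})$ and $\lie{g}_\alpha \subset \lie{g}^{-\theta}$, or with $\alpha \neq \theta\alpha$ and both $\alpha,\theta\alpha \in \Delta(\lie{u}_{P_J})$. From this, the highest-weight set $\Lambda^+(\lie{u}_{P_J}\cap\lie{g}^{-\theta})$ and the decomposition of $\lie{u}_{P_J}\cap\lie{g}^{-\theta}$ into irreducible $(L_J\cap K)$-modules can be read off, and sphericity is decided by the tables of \cite{Benson.Ratcliff.1996}.

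Two shortcuts will substantially cut down the work. For a Hermitian pair $(G,K)$ the subgroup $K$ is itself the Levi part of a maximal parabolic subgroup, so one can apply Lemma~\ref{lemma:double.triple} to identify the finite-type condition for $G/P \times K/B_K$ with the finite-type condition for the triple flag variety $G/P \times G/P_{\opposite{\Pi\setminus\{\alpha_i\}}} \times G/B$; the answer is then read off directly from Stembridge's classification (Theorem~\ref{theorem:list.triple}), without any linear-algebra computation. For non-Hermitian pairs, the crude dimension inequality $\dim L_J \ge \dim G - \dim K - \rank K$ (a necessary condition for $G/P_J \times K/B_K$ to admit an open $K$-orbit) immediately rules out all but a small number of candidate $J$'s, especially when $G$ is exceptional; only the surviving $J$'s need the full weight-by-weight analysis above. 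The split factor case $G = G' \times G'$ with $K = \diag G'$ reduces directly to Stembridge's theorem as well, because the double flag variety is literally a triple flag variety $G'/P'_1 \times G'/P'_2 \times G'/B_{G'}$.

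The main obstacle is the bookkeeping: although each individual symmetric pair requires only a finite check, for the classical series $(\lie{sl}_{2n},\lie{sp}_n)$, $(\lie{so}_{p+q},\lie{so}_p\oplus\lie{so}_q)$ and $(\lie{sp}_n,\lie{sp}_p\oplus\lie{sp}_q)$ one must work with parabolic subsets of all cardinalities $|\Pi\setminus J| \leq 3$ (sometimes larger) and verify sphericity of compound modules such as $\bbC^i \otimes \bbC^{j-i} \oplus S^2(\bbC^i) \oplus \cdots$, keeping careful track of central torus characters (so that, for example, $\bbC^{i-1}\oplus (\bbC^{i-1})^\ast$ summands are correctly identified). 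Once all pairs are exhausted, assembling the positive cases gives exactly the entries of Table~\ref{table:k.spherical}, completing the classification.
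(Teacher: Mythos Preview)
Your proposal is correct and follows essentially the same approach as the paper: reduce via Theorem~\ref{thm:finiteness.iff.MFaction} to the sphericity of the $(L_J\cap K)$-action on $\lie{u}_{P_J}\cap\lie{g}^{-\theta}$, handle Hermitian pairs through Lemma~\ref{lemma:double.triple} and Stembridge's Theorem~\ref{theorem:list.triple}, prune the remaining cases with the dimension inequality $\dim L_J \ge \dim G - \dim K - \rank K$, and for each surviving $J$ compute the $(L_J\cap K)$-module $\lie{u}_{P_J}\cap\lie{g}^{-\theta}$ explicitly from the Vogan diagram and compare with the Benson--Ratcliff tables. The paper executes precisely this plan case by case in \S\ref{section:k.spherical}, so there is nothing to add beyond carrying out the bookkeeping you describe.
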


\begin{remark}
\label{remark:choice.of.k}
For $\lie{g}\simeq \lie{so}_{4n}$,
 a symmetric subalgebra $\lie{k}$ that is isomorphic to
 $\lie{sl}_{2n}\oplus \bbC$ is not unique up to inner automorphisms of $\lie{g}$.
For $(\lie{g},\lie{k})=(\lie{so}_{2n}, \lie{sl}_{n}\oplus \bbC)$
 in Table~\ref{table:k.spherical},
 we take $(\lie{g},\lie{k})$
 and a positive system $\Delta^+$ in such a way that
 the Vogan diagram becomes
\begin{align*}
\begin{xy}
\ar@{-} (0,0) *++!D{\alpha_1} *{\circ}="A"; (10,0) *++!D{\alpha_2} 
 *{\circ}="B"
\ar@{-} "B"; (20,0)="C" 
\ar@{.} "C"; (30,0)="D" 
\ar@{-} "D"; (40,0) *+!DR{\alpha_{n-2}} *{\circ}="E"
\ar@{-} "E"; (45,8.6)  *+!L{\alpha_{n-1}} *{\circ}
\ar@{-} "E"; (45,-8.6)  *+!L{\alpha_n} *{\bullet}
\end{xy}
\end{align*}
In particular, 
 $K$ is the Levi component $L_{\Pi\setminus\{\alpha_n\}}$
 of the parabolic subgroup $P_{\Pi\setminus\{\alpha_n\}}$.

Similarly, the subalgebra $\lie{k}$ of $\lie{g}$
 is not unique for
 $(\lie{g},\lie{k})\simeq(\lie{so}_8, \lie{so}_7)$,
 $(\lie{so}_8, \lie{so}_6 \oplus \bbC)$,
 $(\lie{so}_8, \lie{so}_5 \oplus \lie{so}_3)$.
In Table~\ref{table:k.spherical},
 we take $(\lie{g},\lie{k})$
 and positive systems $\Delta^+$ in such a way that
 the Vogan diagrams become
\begin{align*}
\begin{xy}
\ar@{-} (0,0) *++!D{\alpha_1} *{\circ}="A"; (10,0) *++!D{\alpha_2} 
 *{\circ}="B"
\ar@{-} "B"; (15,8.6)  *+!L{\alpha_{3}} *{\circ}="C"
\ar@{-} "B"; (15,-8.6)  *+!L{\alpha_4} *{\circ}="D"
\ar@/^1pc/@{<->} "C"; "D"
\ar@{-} (30,0) *++!D{\alpha_1} *{\bullet}="A"; (40,0) *++!D{\alpha_2} 
 *{\circ}="B"
\ar@{-} "B"; (45,8.6)  *+!L{\alpha_{3}} *{\circ}="C"
\ar@{-} "B"; (45,-8.6)  *+!L{\alpha_4} *{\circ}="D"
\ar@{-} (60,0) *++!D{\alpha_1} *{\bullet}="A"; (70,0) *++!D{\alpha_2} 
 *{\circ}="B"
\ar@{-} "B"; (75,8.6)  *+!L{\alpha_{3}} *{\circ}="C"
\ar@{-} "B"; (75,-8.6)  *+!L{\alpha_4} *{\circ}="D"
\ar@/^1pc/@{<->} "C"; "D"
\end{xy}
\end{align*}
for $(\lie{g},\lie{k})=(\lie{so}_8, \lie{so}_7)$,
 $(\lie{so}_8, \lie{so}_6 \oplus \bbC)$,
 $(\lie{so}_8, \lie{so}_5 \oplus \lie{so}_3)$, respectively.
\end{remark}

\begin{longtable}{c|c|c}
\hline
\rule[0pt]{0pt}{12pt}
$\lie{g}$ & $\lie{k}$ & $\Pi \setminus J\ (P=P_J)$
\\
\hline\hline
\multicolumn{1}{c}{
\rule[0pt]{0pt}{12pt}
\rule[-5pt]{0pt}{0pt}
$\lie{sl}_{n+1}$} & \multicolumn{1}{c}{} & 
\multicolumn{1}{c}{
\begin{xy}
\ar@{-} (0,0) *++!D{\alpha_1} *{\circ}="A"; (10,0) *++!D{\alpha_2} 
 *{\circ}="B"
\ar@{-} "B"; (20,0)="C" 
\ar@{.} "C"; (30,0)="D" 
\ar@{-} "D"; (40,0) *++!D{\alpha_{n}} *{\circ}="E"
\end{xy}
}\\
\hline

\rule[0pt]{0pt}{12pt}
\rule[-5pt]{0pt}{0pt}
$\lie{sl}_{n+1}$ & $\lie{so}_{n+1}$ & $\{\alpha_i\} (\forall i)$ \\
\hline

\rule[0pt]{0pt}{12pt}
$\lie{sl}_{2m}$ & $\lie{sp}_m$ &
 $\{\alpha_i\} (\forall i)$,
 $\{\alpha_i, \alpha_{i+1}\} (\forall i)$,\\
$2m=n+1$ &&
 $\{\alpha_1, \alpha_i\} (\forall i)$, 
 $\{\alpha_i, \alpha_n\}  (\forall i)$, \\
&& $\{\alpha_1, \alpha_2, \alpha_3\}$,
 $\{\alpha_{n-2}, \alpha_{n-1}, \alpha_n\}$,\\
\rule[-5pt]{0pt}{0pt}
&& $\{\alpha_1, \alpha_2, \alpha_n\}$,
 $\{\alpha_1, \alpha_{n-1}, \alpha_n\}$
 \\
\hline

\rule[0pt]{0pt}{12pt}
$\lie{sl}_{p+q}$ & $\lie{sl}_p\oplus\lie{sl}_q\oplus\bbC$&
 $\{\alpha_i\} (\forall i)$,
 $\{\alpha_i, \alpha_{i+1}\} (\forall i)$,\\
$p+q=n+1$ & $1\leq p\leq q$ &
 $\{\alpha_1, \alpha_i\} (\forall i)$, 
 $\{\alpha_i, \alpha_n\} (\forall i)$, \\
&& $\{\alpha_i, \alpha_j\} (\forall i, j)$ if $p=2$,\\
\rule[-5pt]{0pt}{0pt}
&& any subset of $\Pi$ if $p=1$\\
\hline\hline

\multicolumn{1}{c}{
\rule[0pt]{0pt}{12pt}
\rule[-5pt]{0pt}{0pt}
$\lie{so}_{2n+1}$} & \multicolumn{1}{c}{} & 
\multicolumn{1}{c}{
\begin{xy}
\ar@{-} (0,0) *++!D{\alpha_1} *{\circ}="A"; (10,0) *++!D{\alpha_2} 
 *{\circ}="B"
\ar@{-} "B"; (20,0)="C" 
\ar@{.} "C"; (30,0)="D" 
\ar@{-} "D"; (40,0) *++!D{\alpha_{n-1}} *{\circ}="E"
\ar@{=>} "E"; (50,0) *++!D{\alpha_n} *{\circ}="F"
\end{xy}
}\\
\hline

\rule[0pt]{0pt}{12pt}
$\lie{so}_{p+q}$ & $\lie{so}_p \oplus \lie{so}_q$
 & $\{\alpha_1\}$, $\{\alpha_n\}$, \\
$p+q=2n+1$ & $1\leq p\leq q$ &
 $\{\alpha_i\} (\forall i)$ if $p=2$,\\
\rule[-5pt]{0pt}{0pt}
&& any subset of $\Pi$ if $p=1$\\
\hline\hline

\multicolumn{1}{c}{
\rule[0pt]{0pt}{12pt}
\rule[-5pt]{0pt}{0pt}
$\lie{so}_{2n}$} &\multicolumn{1}{c}{} & 
\multicolumn{1}{c}{
\begin{xy}
\ar@{-} (0,0) *++!D{\alpha_1} *{\circ}="A"; (10,0) *++!D{\alpha_2} 
 *{\circ}="B"
\ar@{-} "B"; (20,0)="C" 
\ar@{.} "C"; (30,0)="D" 
\ar@{-} "D"; (40,0) *+!DR{\alpha_{n-2}} *{\circ}="E"
\ar@{-} "E"; (45,8.6)  *+!L{\alpha_{n-1}} *{\circ}
\ar@{-} "E"; (45,-8.6)  *+!L{\alpha_n} *{\circ}
\end{xy}
}\\
\hline

\rule[0pt]{0pt}{12pt}
$\lie{so}_{p+q}$ & \footnote[1]{See Remark~\ref{remark:choice.of.k}}$\lie{so}_p \oplus \lie{so}_q$ & $\{\alpha_1\}$,
 $\{\alpha_{n-1}\}$, $\{\alpha_n\}$, \\
$p+q=2n$ & $1\leq p\leq q$ &
$\{\alpha_i\} (\forall i)$ if $p=2$,\\
$n\geq 4$ && $\{\alpha_i, \alpha_{n-1}\} (\forall i)$ if $p=2$,\\
&& $\{\alpha_i, \alpha_{n}\} (\forall i)$ if $p=2$,\\
\rule[-5pt]{0pt}{0pt}
&& any subset of $\Pi$ if $p=1$\\
\hline

\rule[0pt]{0pt}{12pt}
\rule[-5pt]{0pt}{0pt}
$\lie{so}_{2n}$ & \footnotemark[1]
 $\lie{sl}_n\oplus\bbC$ & $\{\alpha_1\}$,
 $\{\alpha_2\}$, $\{\alpha_3\}$, $\{\alpha_{n-1}\}$, $\{\alpha_n\}$, \\
$n\geq 4$ && $\{\alpha_1,\alpha_2\}$, $\{\alpha_1,\alpha_{n-1}\}$,
 $\{\alpha_1,\alpha_n\}$, $\{\alpha_{n-1},\alpha_n\}$,\\
&& $\{\alpha_2, \alpha_3\}$ if $n=4$\\
\hline\hline

\multicolumn{1}{c}{
\rule[0pt]{0pt}{12pt}
\rule[-5pt]{0pt}{0pt}
$\lie{sp}_n$} &\multicolumn{1}{c}{} & 
\multicolumn{1}{c}{
\begin{xy}
\ar@{-} (0,0) *++!D{\alpha_1} *{\circ}="A"; (10,0) *++!D{\alpha_2} 
 *{\circ}="B"
\ar@{-} "B"; (20,0)="C" 
\ar@{.} "C"; (30,0)="D" 
\ar@{-} "D"; (40,0) *++!D{\alpha_{n-1}} *{\circ}="E"
\ar@{<=} "E"; (50,0) *++!D{\alpha_n} *{\circ}="F"
\end{xy}
}\\
\hline

\rule[0pt]{0pt}{12pt}
\rule[-5pt]{0pt}{0pt}
$\lie{sp}_n$ & $\lie{sl}_n\oplus \bbC$ & $\{\alpha_1\}$,
 $\{\alpha_n\}$ \\
\hline

\rule[0pt]{0pt}{12pt}
$\lie{sp}_{p+q}$ & $\lie{sp}_p \oplus \lie{sp}_q$ & $\{\alpha_1\}$,
 $\{\alpha_2\}$, $\{\alpha_3\}$,
 $\{\alpha_n\}$, $\{\alpha_1,\alpha_2\}$, \\
$p+q=n$ & $1\leq p\leq q$ &
 $\{\alpha_i\} (\forall i)$ if $p\leq 2$,\\
\rule[-5pt]{0pt}{0pt}
&& $\{\alpha_i, \alpha_j\} (\forall i, j)$ if $p=1$\\
\hline\hline

\multicolumn{1}{c}{
\rule[0pt]{0pt}{12pt}
\rule[-5pt]{0pt}{0pt}
$\lie{f}_4$} & \multicolumn{1}{c}{} & 
\multicolumn{1}{c}{
\begin{xy}
\ar@{-} (0,0) *++!D{\alpha_1} *{\circ}="A"; (10,0) *++!D{\alpha_2} 
 *{\circ}="B"
\ar@{=>} "B"; (20,0)*++!D{\alpha_3}  *{\circ}="C"
\ar@{-} "C"; (30,0) *++!D{\alpha_4}  *{\circ}="D"
\end{xy}
}\\
\hline

\rule[0pt]{0pt}{12pt}
\rule[-5pt]{0pt}{0pt}
$\lie{f}_4$ & $\lie{so}_9$ & $\{\alpha_1\}$,
 $\{\alpha_2\}$, $\{\alpha_3\}$, $\{\alpha_4\}$,
 $\{\alpha_1,\alpha_4\}$ \\
\hline\hline

\multicolumn{1}{c}{
\rule[0pt]{0pt}{12pt}
\rule[-5pt]{0pt}{0pt}
$\lie{e}_6$} &\multicolumn{1}{c}{} & 
\multicolumn{1}{c}{
\begin{xy}
\ar@{-} (0,0) *++!D{\alpha_1} *{\circ}="A"; (10,0) *++!D{\alpha_3} 
 *{\circ}="B"
\ar@{-} "B"; (20,0)*++!U{\alpha_4}  *{\circ}="C"
\ar@{-} "C"; (30,0) *++!D{\alpha_5}  *{\circ}="D"
\ar@{-} "C"; (20,10) *++!D{\alpha_2}  *{\circ}="G"
\ar@{-} "D"; (40,0) *++!D{\alpha_6}  *{\circ}="E"
\end{xy}
}\\
\hline

\rule[0pt]{0pt}{12pt}
\rule[-5pt]{0pt}{0pt}
$\lie{e}_6$ & $\lie{sp}_4$ & $\{\alpha_1\}$, $\{\alpha_6\}$ \\
\hline

\rule[0pt]{0pt}{12pt}
\rule[-5pt]{0pt}{0pt}
$\lie{e}_6$ & $\lie{sl}_6\oplus \lie{sl}_2$ & $\{\alpha_1\}$, $\{\alpha_6\}$ \\
\hline

\rule[0pt]{0pt}{12pt}
\rule[-5pt]{0pt}{0pt}
$\lie{e}_6$ & $\lie{so}_{10}\oplus \bbC$
 & $\{\alpha_1\}$, $\{\alpha_2\}$,
 $\{\alpha_3\}$, $\{\alpha_5\}$, $\{\alpha_6\}$,
 $\{\alpha_1,\alpha_6\}$ \\
\hline

\rule[0pt]{0pt}{12pt}
\rule[-5pt]{0pt}{0pt}
$\lie{e}_6$ & $\lie{f}_4$ & $\{\alpha_1\}$, $\{\alpha_2\}$,
 $\{\alpha_3\}$, $\{\alpha_5\}$, $\{\alpha_6\}$,\\
& & 
 $\{\alpha_1,\alpha_2\}$, $\{\alpha_2,\alpha_6\}$, $\{\alpha_1,\alpha_3\}$,
 $\{\alpha_5,\alpha_6\}$\\
\hline\hline

\multicolumn{1}{c}{
\rule[0pt]{0pt}{12pt}
\rule[-5pt]{0pt}{0pt}
$\lie{e}_7$} &\multicolumn{1}{c}{} & 
\multicolumn{1}{c}{
\begin{xy}
\ar@{-} (0,0) *++!D{\alpha_1} *{\circ}="A"; (10,0) *++!D{\alpha_3} 
 *{\circ}="B"
\ar@{-} "B"; (20,0)*++!U{\alpha_4}  *{\circ}="C"
\ar@{-} "C"; (30,0) *++!D{\alpha_5}  *{\circ}="D"
\ar@{-} "C"; (20,10) *++!D{\alpha_2}  *{\circ}="G"
\ar@{-} "D"; (40,0) *++!D{\alpha_6}  *{\circ}="E"
\ar@{-} "E"; (50,0) *++!D{\alpha_7}  *{\circ}="F"
\end{xy}}\\
\hline

\rule[0pt]{0pt}{12pt}
\rule[-5pt]{0pt}{0pt}
$\lie{e}_7$ & $\lie{sl}_8$ & $\{\alpha_7\}$ \\
\hline

\rule[0pt]{0pt}{12pt}
\rule[-5pt]{0pt}{0pt}
$\lie{e}_7$ & $\lie{so}_{12}\oplus \lie{sl}_2$ & $\{\alpha_7\}$ \\
\hline

\rule[0pt]{0pt}{12pt}
\rule[-5pt]{0pt}{0pt}
$\lie{e}_7$ & $\lie{e}_6\oplus \bbC$ & $\{\alpha_1\}$, $\{\alpha_2\}$,
 $\{\alpha_7\}$ \\
\hline

\caption{$K$-spherical $G/P$}
\label{table:k.spherical}
\vphantom{$\Biggm|$}
\end{longtable}


\section{Classification of $G$-spherical $G/Q$}

In this section, we give a classification of the triples
 $(G, K, Q)$ such that $G/B\times K/Q$ is of finite type.
As in the previous section,
 it is enough to consider the two cases:
\begin{itemize}
\item $G$ is simple;
\item $G=G'\times G'$, $K=\diag{G'}$, and $G'$ is simple.
\end{itemize}
In the latter case, 
 $G/B\times K/Q$ can be written
 as the triple flag variety $G'/B_{G'}\times G'/B_{G'} \times G'/Q$
 and the classification was already given
 (see Corollary~\ref{corollary:PBB}).

In what follows we assume that $G$ is simple.

\medskip

We first consider the case
 where $(G,K)$ is the complexification of a Hermitian symmetric pair.
We choose a $\theta$-stable Borel subgroup $B$
 of $G$ and a simple root $\beta\in\Pi$ such that
$K=L_{\Pi\setminus\{\beta\}}$.
Since $\rank G=\rank K$, we have $ T = T_K $.
Therefore the set of simple roots $\Pi_K$ for $K$ can be 
 regarded as a subset of $\Pi$ and then 
 $\Pi=\Pi_K \cup \{\beta\}$.

\begin{lemma}
\label{lemma:double.triple2} 
Suppose that
 $(G,K)$ is the complexification of a Hermitian symmetric pair
 and $Q_{J_K}$ is the parabolic subgroup of $K$
 corresponding to a subset $J_K\subset \Pi_K(\subset \Pi)$.
Choose a $\theta$-stable Borel subgroup $B$
 and a simple root $\beta$ such that
 $K=L_{\Pi\setminus\{\beta\}}$.
Then $G/B \times K/Q_{J_K}$ is of finite type if and only if
 $G/B \times G/P_{J_K} \times G/P_{\opposite{\Pi\setminus\{\beta\}}}$
 is of finite type.
\end{lemma}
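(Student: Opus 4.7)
The plan is to parallel the proof of Lemma~\ref{lemma:double.triple}, translating both conditions into statements about $G$-sphericity and then matching them via an open immersion of homogeneous spaces. By Theorem~\ref{theorem:case.P=B} the double flag variety $G/B\times K/Q_{J_K}$ is of finite type if and only if $G/Q_{J_K}$ is $G$-spherical; and the triple flag variety $G/B\times G/P_{J_K}\times G/P_{\opposite{\Pi\setminus\{\beta\}}}$ is of finite type if and only if $G/P_{J_K}\times G/P_{\opposite{\Pi\setminus\{\beta\}}}$ has an open $B$-orbit, i.e.\ is $G$-spherical. Since $P_{\opposite{\Pi\setminus\{\beta\}}}$ is $G$-conjugate to $P^-_{\Pi\setminus\{\beta\}}$, the lemma reduces to showing that $G/Q_{J_K}$ is $G$-spherical iff $G/P_{J_K}\times G/P^-_{\Pi\setminus\{\beta\}}$ is.

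The key step, which I expect to be the only non-routine piece, is the parabolic intersection identity
\[
Q_{J_K} \;=\; K\cap P_{J_K} \;=\; P_{J_K}\cap P^-_{\Pi\setminus\{\beta\}}.
\]
Here I view $J_K\subset\Pi_K=\Pi\setminus\{\beta\}$ as a subset of $\Pi$ so that $P_{J_K}\subset G$ makes sense. I would verify this by noting that both sides are $T$-stable, decomposing into root subgroups, and checking that both coincide with the standard parabolic of $K=L_{\Pi\setminus\{\beta\}}$ whose roots form $\Delta^+_{\Pi\setminus\{\beta\}}\cup\Delta^-_{J_K}$.

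Given the identity, the inclusions $P_{J_K}\supset B$ and $P^-_{\Pi\setminus\{\beta\}}\supset B^-$ immediately give that $P_{J_K}\cdot P^-_{\Pi\setminus\{\beta\}}\supset BB^-$ is open in $G$. A standard dimension count then shows that the $G$-equivariant map
\[
G/Q_{J_K}\;\longrightarrow\; G/P_{J_K}\times G/P^-_{\Pi\setminus\{\beta\}},\qquad gQ_{J_K}\longmapsto (gP_{J_K},\,gP^-_{\Pi\setminus\{\beta\}}),
\]
is an open immersion with dense image, exactly as in the proof of Lemma~\ref{lemma:double.triple}. Since existence of an open $B$-orbit is preserved under dense open inclusions, the $G$-sphericity properties of the two spaces coincide, and concatenating the equivalences yields the lemma.
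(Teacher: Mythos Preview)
Your proposal is correct and follows essentially the same route as the paper's proof: reduce both finiteness statements to $G$-sphericity, replace $P_{\opposite{\Pi\setminus\{\beta\}}}$ by its conjugate $P^-_{\Pi\setminus\{\beta\}}$, and use the open immersion $G/Q_{J_K}\hookrightarrow G/P_{J_K}\times G/P^-_{\Pi\setminus\{\beta\}}$ coming from the intersection identity $Q_{J_K}=P_{J_K}\cap P^-_{\Pi\setminus\{\beta\}}$. The paper verifies that identity more slickly than a root-by-root check, using the inclusion $P_{J_K}\subset P_{\Pi\setminus\{\beta\}}$ (since $J_K\subset\Pi\setminus\{\beta\}$) to write $P^-_{\Pi\setminus\{\beta\}}\cap P_{J_K}=(P^-_{\Pi\setminus\{\beta\}}\cap P_{\Pi\setminus\{\beta\}})\cap P_{J_K}=K\cap P_{J_K}=Q_{J_K}$.
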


\begin{proof}
The opposite parabolic subgroup $P_{\Pi\setminus\{\beta\}}^-$
 is conjugate to $P_{\opposite{\Pi\setminus\{\beta\}}}$ and 
 hence
$ G/P_{\Pi\setminus\{\beta\}}^-
 \simeq G/P_{\opposite{\Pi\setminus\{\beta\}}}$.
Since $P_{\Pi\setminus\{\beta\}}^- \cdot P_{J_K}$ is open in $G$ and
\begin{equation*}
P_{\Pi\setminus\{\beta\}}^- \cap P_{J_K}
= P_{\Pi\setminus\{\beta\}}^- \cap P_{\Pi\setminus\{\beta\}} \cap P_{J_K}
= K \cap P_{J_K}
= Q_{J_K},
\end{equation*}
 $G/Q_{J_K}$ is openly embedded into
 $G/P_{\Pi\setminus\{\beta\}}^- \times G/P_{J_K}$.
Hence $G/P_{J_K} \times G/P_{\opposite{\Pi\setminus\{\beta\}}}$
 is $G$-spherical if and only if $G/Q_{J_K}$ is $G$-spherical. 
(See also \cite{KNOT.2012}.)
\end{proof}

By Theorem~\ref{theorem:list.triple} and Lemma~\ref{lemma:double.triple2}, 
 we get a list of $G/B \times K/Q$ of finite type.

\medskip

For the remaining pairs $(G, K)$, we
 use Theorem~\ref{theorem:case.P=B}
 and the classification of spherical linear actions in \cite{Benson.Ratcliff.1996}.
We carry out a classification according to the following procedure.
\begin{enumerate}
\item
For each triple $(G, K, Q)$
 we choose a $\theta$-stable parabolic subgroup $P'$ of $G$
 such that $P'\cap K=Q$.
\item
Determine the Lie algebras $\lie{l}'$,
 $\lie{l}' \cap \lie{k}$ and $(\lie{u'})^{-\theta}$.
\item
Determine $\lie{m}'$ (see Theorem~\ref{theorem:case.P=B})
 and check whether the $M'_0$-action on 
 $(\lie{u'})^{-\theta}$ is spherical
 using the list of \cite{Benson.Ratcliff.1996}.
\end{enumerate}
The dimension condition
 $\dim G/B + \dim K/Q \leq \dim K$
 is helpful in some cases.
This is equivalent to $\dim L'_K \geq \dim G-\dim K-\rank G$,
 where $L'_K$ is a Levi component of $Q$.
Also we use Corollary~\ref{corollary:P=B.to.Q=B_K} to exclude
 unsuitable cases.

For step (1) we choose a Borel subgroup $B_K$ of $K$ and
 a Cartan subgroup $T_K$ in $B_K$.
We label the simple roots as $\Pi_K=\{\beta_1,\beta_2,\cdots\}$
 by describing the Dynkin diagram for $K$ and choose a parabolic subgroup
 $Q\supset B_K$ in terms of $\Pi_K$.
Then we take a $\theta$-stable Borel subgroup $B$ of $G$
 and a $\theta$-stable Cartan subgroup $T$ in $B$ satisfying:
\begin{itemize}
\item $B\cap K=B_K$,
\item $T\cap K=T_K$, and 
\item there exists a $\theta$-stable parabolic subgroup $P'$
 containing $B$ such that $P'\cap K=Q$.
\end{itemize}
Because of the third condition here,
 we need to choose different $B$ depending on $Q$.
We write the Vogan diagram corresponding to $(G,B,K)$
 and label the simple roots of $G$ as $\Pi=\{\alpha_1,\alpha_2,\dots\}$.
The standard parabolic subgroup $P'$ is given as $P'=P_{J'}$ for
 $J'\subset \Pi$.

For step (2) we have $L'=L_{J'}$ and $L'\cap K=L'_K$.
The determination of $(\lie{u'})^{-\theta}$ 
 will be done as in \S~\ref{section:k.spherical}.

For step (3) the `$M$-part' of a symmetric pair is well-known
 and found for example in \cite[Appendix C]{Knapp.2002}.
We restrict the $(L'\cap K)$-module $(\lie{u'})^{-\theta}$ to
 the subgroup $M'_0$ and see whether it is spherical.

\subsection{$(\lie{sl}_n, \lie{so}_n)$}
\ 

Let $(\lie{g}, \lie{k})=(\lie{sl}_n, \lie{so}_{n})$.
Then the dimension condition is
\[
\begin{aligned}
\dim L'_K &\geq \dim G-\dim K-\rank G=(n^2-1)-\frac{n(n-1)}{2}-(n-1)
\\
&=\frac{n(n-1)}{2}=\dim K.
\end{aligned}
\]
This does not hold for a proper parabolic subgroup $Q\subset K$.

\subsection{$(\lie{sl}_{2n}, \lie{sp}_n)$}
\ 

Let $(\lie{g}, \lie{k})=(\lie{sl}_{2n}, \lie{sp}_{n})$ for $n\geq 2$.
We fix the numbering $\beta_1,\dots,\beta_n \in \Pi_K$ in such a way that
 the Dynkin diagram of $K$ becomes
\begin{align*}
\begin{xy}
\ar@{-} (0,0) *++!D{\beta_1} *{\circ}="A"; (10,0) *++!D{\beta_2} 
 *{\circ}="B"
\ar@{-} "B"; (20,0)="C" 
\ar@{.} "C"; (30,0)="D" 
\ar@{-} "D"; (40,0) *++!D{\beta_{n-1}} *{\circ}="E"
\ar@{<=} "E"; (50,0) *++!D{\beta_n} *{\circ}="F"
\end{xy}
\end{align*}
Suppose that $J_K=\Pi_K\setminus\{\beta_i\}$ with $1\leq i\leq n$
 and $Q=Q_{J_K}$.
We take $B$ and $T$ as in \S~\ref{slsp}, so the Vogan diagram is
\begin{align*}
\begin{xy}
\ar@{-} (0,0) *++!D{\alpha_1} *{\circ}="A"; (10,0) 
\ar@{.} (10,0) ; (20,0) 
\ar@{-} (20,0) ; (30,0) *++!D{\alpha_{n-1}} *{\circ}="B"
\ar@{-} "B" ; (40,0) *++!D{\alpha_n} *{\circ}="C"
\ar@{-} "C"; (50,0)*++!D{\alpha_{n+1}}  *{\circ}="D"
\ar@{-} "D"; (60,0) 
\ar@{.} (60,0); (70,0) 
\ar@{-} (70,0); (80,0) *++!D{\alpha_{2n-1}}  *{\circ}="E"
\ar@/_1.0pc/@{<->} "B"; "D"
\ar@/_2.0pc/@{<->} "A"; "E"
\end{xy}
\end{align*}
and $\beta_j=\alpha_j|_{\lie{t}^\theta}$.
By putting $J':=\Pi\setminus \{\alpha_i,\alpha_{2n-i}\}$ and $P'=P_{J'}$,
 we get $P'\cap K=Q$.
In \S~\ref{slsp}
 we saw that $(\lie{u}')^{-\theta}$ is $(L'\cap K)$-spherical
 only if $i=1$ or $n$.

If $i=1$, we have $(\lie{l}',\lie{l}'\cap \lie{k})
 \simeq(\lie{gl}_1\oplus\lie{sl}_{2n-2}\oplus\lie{gl}_1,
 \lie{gl}_1\oplus\lie{sp}_{n-1})$ and hence $\lie{m}'=\lie{gl}_1\oplus
 \underbrace{\lie{sl}_2\oplus\cdots\oplus\lie{sl}_2}_{n-1}$.
Since $(\lie{u}')^{-\theta}\simeq \bbC^{2n-2}$ is a natural representation
 of the $\lie{sp}_{n-1}$-component in $\lie{l}'\cap \lie{k}$,
its restriction to $\underbrace{\lie{sl}_2\oplus\cdots\oplus\lie{sl}_2}_{n-1}$
 decomposes as
$(\lie{u}')^{-\theta}\simeq \underbrace{\bbC^2\oplus\cdots\oplus\bbC^2}_{n-1}$.
Here each $\lie{sl}_2$ acts naturally on one $\bbC^2$ 
 and trivially on the others.
Hence this is $M'_0$-spherical.

If $i=n$,  we have $(\lie{l}',\lie{l}'\cap \lie{k})
 =(\lie{sl}_n\oplus\lie{gl}_{1}\oplus\lie{sl}_n,
 \lie{gl}_1\oplus\lie{sl}_{n})$ and hence $\lie{m}'=\lie{gl}_1\oplus
 \underbrace{\lie{gl}_1\oplus\cdots\oplus\lie{gl}_1}_{n-1}$.
Therefore, $\lie{m'}=\lie{t}^\theta$ and 
 $M'_0$-module is spherical if and only if the weights are linearly
 independent.
Since $(\lie{u}')^{-\theta}\simeq \bigwedge^2\bbC^n$ as an $(L'\cap K)$-module,
 it is spherical as an $M'_0$-module if and only if $n\leq 3$.

Suppose that $J_K=\Pi_K\setminus\{\beta_1,\beta_n\}$.
Then we can take $B$ as above and then putting 
 $J'=\{\alpha_1,\alpha_n,\alpha_{2n-1}\}$, we have $P'\cap K=Q$.
According to \S~\ref{slsp}, $(\lie{u}')^{-\theta}$ is $(L'\cap K)$-spherical 
 only if $n=2$.
For $n=2$, since $(\lie{sl}_4,\lie{sp}_2)\simeq(\lie{so}_6,\lie{so}_5)$,
 any double flag variety is of finite type
 by Theorem~\ref{theorem:k.spherical}.

Consequently, $G/B\times K/Q_{J_K}$ is of finite type if and only if
 at least one of the following three conditions holds:
\begin{itemize}
\item $J_K=\Pi_K\setminus\{\beta_1\}$,
\item $n=3$ and $J_K=\Pi_K\setminus\{\beta_3\}$,
\item $n=2$.
\end{itemize}

\subsection{$(\lie{so}_n, \lie{so}_{p}\oplus\lie{so}_q)$}
\ 

Let $(\lie{g}, \lie{k})=(\lie{so}_n, \lie{so}_{p}\oplus\lie{so}_q)$
 with $p+q=n$.
If $p=1$ or $q=1$, then Theorem~\ref{theorem:k.spherical} implies
 that any double flag variety is of finite type.
If $p=2$ or $q=2$, the pair $(\lie{g}, \lie{k})$ is of Hermitian type.
Let $p,q\geq 3$. 
We assume moreover that $n$ is odd as the case where $n$ is even
 can be treated similarly.
Then we may assume that $p$ is even and $q$ is odd.
Put $m:=\frac{n-1}{2}$, $p'=\frac{p}{2}$, and $q'=\frac{q-1}{2}$.
We fix the numbering $\beta_1,\dots,\beta_m\in\Pi_K$ in such a way that
 the Dynkin diagram of $K$ becomes
\begin{align*}
\begin{xy}
\ar@{-} (0,0) *++!D{\beta_1} *{\circ}="A"; (10,0)="B" 
\ar@{.} "B"; (20,0) 
\ar@{-} (20,0); (30,0) *+!DR{\beta_{p'-2}} *{\circ}="C"
\ar@{-} "C"; (35,8.6)  *+!L{\beta_{p'-1}} *{\circ}="D"
\ar@{-} "C"; (35,-8.6)  *+!L{\beta_{p'}} *{\circ}
\ar@{-} (60,0) *++!D{\beta_{p'+1}} *{\circ}="F"; (70,0)="G" 
\ar@{.} "G"; (80,0)="H" 
\ar@{-} "H"; (90,0) *++!D{\beta_{m-1}} *{\circ}="I"
\ar@{=>} "I"; (102,0)  *++!D{\beta_{m}} *{\circ}
\end{xy}
\end{align*}

Suppose that $J_K=\Pi_K\setminus\{\beta_i\}$ with $1\leq i\leq p'$
 and $Q=Q_{J_K}$.
We take $B$ and $T$ as in \S~\ref{soso}, so the Vogan diagram is
\begin{align*}
\begin{xy}
\ar@{-} (0,0) *++!D{\alpha_1} *{\circ}="A"; (10,0) 
\ar@{.} (10,0) ; (20,0) 
\ar@{-} (20,0) ; (30,0) *++!D{\alpha_{p'-1}} *{\circ}="B"
\ar@{-} "B" ; (40,0) *++!D{\alpha_{p'}} *{\bullet}="C"
\ar@{-} "C"; (50,0)*++!D{\alpha_{p'+1}}  *{\circ}="D"
\ar@{-} "D"; (60,0) 
\ar@{.} (60,0); (70,0) 
\ar@{-} (70,0); (80,0) *++!D{\alpha_{m-1}}  *{\circ}="E"
\ar@{=>} "E"; (90,0) *++!D{\alpha_m} *{\circ}
\end{xy}
\end{align*}
and we have
$\beta_j=\alpha_j$ for $j\neq p'$
 and $\beta_{p'}=\alpha_{p'-1}+2(\alpha_{p'}+\cdots+\alpha_m)$.
By putting $J':=\Pi\setminus \{\alpha_i\}$ and $P'=P_{J'}$,
 we get $P'\cap K=Q$.
In \S~\ref{soso} we saw that $(\lie{u}')^{-\theta}$ is $(L'\cap K)$-spherical
 only if $i=1$.
For $i=1$, we have $(\lie{l}',\lie{l}'\cap \lie{k})
 \simeq(\lie{gl}_1\oplus\lie{so}_{n-2},
 \lie{gl}_1\oplus\lie{so}_{p-2}\oplus\lie{so}_q)$
 and hence $\lie{m}'\simeq\lie{gl}_1\oplus \lie{so}_{|p-q-2|}$.
Here $\lie{so}_{|p-q-2|}$ is contained in the $\lie{so}_{p-2}$-component
 of $\lie{l}'\cap\lie{k}$ if $p-2\geq q$
 and in the $\lie{so}_q$-component if $p-2<q$.
Since $(\lie{u}')^{-\theta}\simeq \bbC^q$ is a natural representation
 of the $\lie{so}_q$-component in $\lie{l}'\cap \lie{k}$,
its restriction to $\lie{so}_{|p-q-2|}$ decomposes as
\begin{align*}
(\lie{u}')^{-\theta}\simeq 
\begin{cases}
\underbrace{\bbC\oplus\cdots\oplus\bbC}_{q} \text{\ if $p-2\geq q$},\\
\bbC^{-p+q+2}\oplus\underbrace{\bbC\oplus\cdots\oplus\bbC}_{p-2} \text{\ if $p-2<q$},
\end{cases}
\end{align*}
which is not $M'_0$-spherical.

Suppose that $J_K=\Pi_K\setminus\{\beta_i\}$ with $p'+1\leq i\leq m$
 and $Q=Q_{J_K}$.
We take $B$ and $T$ corresponding to the following Vogan diagram
\begin{align*}
\begin{xy}
\ar@{-} (0,0) *++!D{\alpha_1} *{\circ}="A"; (10,0) 
\ar@{.} (10,0) ; (20,0) 
\ar@{-} (20,0) ; (25,0) *{\circ}="B"
\ar@{-} "B" ; (35,0) *++!D{\alpha_{i-p'}} *{\bullet}="C"
\ar@{-} "C"; (45,0) *{\circ}="D"
\ar@{-} "D"; (50,0) 
\ar@{.} (50,0) ; (60,0) 
\ar@{-} (60,0) ; (65,0) *{\circ}="E"
\ar@{-} "E" ; (75,0) *++!D{\alpha_i} *{\bullet}="F"
\ar@{-} "F" ; (85,0)  *{\circ}="G"
\ar@{-} "G" ; (90,0)
\ar@{.} (90,0); (100,0)
\ar@{-} (100,0) ; (110,0) *{\circ}="H"
\ar@{=>} "H"; (120,0) *++!D{\alpha_m} *{\circ}
\end{xy}
\end{align*}
and then 
 $\beta_j=\alpha_{i+j-p'}$ for $1\leq j\leq p'-1$, 
 $\beta_{p'}=\alpha_{i-1}+2(\alpha_i+\cdots+\alpha_m)$,
 $\beta_j=\alpha_{j-p'}$ for $p'+1\leq j\leq i-1$, 
 $\beta_i=\alpha_{i-p'}+\cdots+\alpha_i$, 
 and  $\beta_j=\alpha_j$ for $i+1 \leq j\leq m$.
By putting $J':=\Pi\setminus \{\alpha_{i-p'}\}$ and $P'=P_{J'}$,
 we get $P'\cap K=Q$.
Theorem~\ref{theorem:k.spherical}
 implies that $(\lie{u}')^{-\theta}$ is $(L'\cap K)$-spherical
 only if $i-p'=1$.
For $i-p'=1$, we have $(\lie{l}',\lie{l}'\cap \lie{k})
 \simeq (\lie{gl}_1\oplus\lie{so}_{n-2},
 \lie{gl}_1\oplus\lie{so}_{p}\oplus\lie{so}_{q-2})$
 and hence $\lie{m}'\simeq\lie{gl}_1\oplus \lie{so}_{|p-q+2|}$.
As in the previous case, we see that 
 $(\lie{u}')^{-\theta}$ is not $M'_0$-spherical.

Hence there is no $G/B\times K/Q_{J_K}$ of finite type if $p,q\geq 3$.

\subsection{$(\lie{sp}_n, \lie{sp}_{p}\oplus\lie{sp}_q)$}
\ 

Let $(\lie{g}, \lie{k})=(\lie{sp}_n, \lie{sp}_{p}\oplus\lie{sp}_q)$
 with $p+q=n$.
We fix the numbering $\beta_1,\dots,\beta_n\in\Pi_K$ in such a way that
 the Dynkin diagram of $K$ becomes
\begin{align*}
\begin{xy}
\ar@{-} (0,0) *++!D{\beta_1} *{\circ}="A"; (10,0)="C" 
\ar@{.} "C"; (20,0)="D" 
\ar@{-} "D"; (30,0) *++!D{\beta_{p-1}} *{\circ}="E"
\ar@{<=} "E"; (40,0) *++!D{\beta_p} *{\circ}="F"
\ar@{-} (60,0) *++!D{\beta_{p+1}} *{\circ}="G"; (70,0)="H" 
\ar@{.} "H"; (80,0)="I" 
\ar@{-} "I"; (90,0) *++!D{\beta_{n-1}} *{\circ}="J"
\ar@{<=} "J"; (100,0) *++!D{\beta_n} *{\circ}
\end{xy}
\end{align*}

Suppose that $J_K=\Pi_K\setminus\{\beta_i\}$ with $1\leq i\leq p$
 and $Q=Q_{J_K}$.
We take $B$ and $T$ as in \S~\ref{spsp}, so the Vogan diagram is
\begin{align*}
\begin{xy}
\ar@{-} (0,0) *++!D{\alpha_1} *{\circ}="A"; (10,0) 
\ar@{.} (10,0) ; (20,0) 
\ar@{-} (20,0) ; (30,0) *++!D{\alpha_{p-1}} *{\circ}="B"
\ar@{-} "B" ; (40,0) *++!D{\alpha_{p}} *{\bullet}="C"
\ar@{-} "C"; (50,0)*++!D{\alpha_{p+1}}  *{\circ}="D"
\ar@{-} "D"; (60,0) 
\ar@{.} (60,0); (70,0) 
\ar@{-} (70,0); (80,0) *++!D{\alpha_{n-1}}  *{\circ}="E"
\ar@{<=} "E"; (90,0) *++!D{\alpha_n} *{\circ}
\end{xy}
\end{align*}
and we have $\beta_j=\alpha_j$ for $j\neq p$
 and $\beta_p=2(\alpha_p+\cdots+\alpha_{n-1})+\alpha_n$.
By putting $J':=\Pi\setminus \{\alpha_i\}$ and $P'=P_{J'}$,
 we get $P'\cap K=Q$.
Put $r:=\min\{p-i,q\}$.
We have $(\lie{l}',\lie{l}'\cap \lie{k}) \simeq(\lie{gl}_i\oplus\lie{sp}_{n-i},
 \lie{gl}_i\oplus\lie{sp}_{p-i}\oplus\lie{sp}_q)$
 and hence $\lie{m}'\simeq\lie{gl}_i\oplus
 \underbrace{\lie{sl}_2\oplus\cdots\lie{sl}_2}_r \oplus \lie{sp}_{|p-q-i|}$.
Here $\lie{sl}_2$s are diagonally embedded in $\lie{sp}_{p-i}\oplus\lie{sp}_q$
 and $\lie{sp}_{|p-q-i|}$ is contained in $\lie{sp}_{p-i}$
 if $p-i\geq q$ and in $\lie{sp}_q$ if $p-i<q$.
According to \S~\ref{spsp},
 $(\lie{u}')^{-\theta}\simeq \bbC^i\otimes\bbC^{2q}$ as an $(L'\cap K)$-module,
 where $\bbC^i$ and $\bbC^{2q}$ are natural representation of
 $\lie{gl}_i$ and $\lie{sp}_{q}$, respectively.
Suppose $p-i\geq q$. Then 
\begin{align*}
(\lie{u}')^{-\theta}\simeq 
\underbrace{(\bbC^i\otimes\bbC^2)\oplus\cdots\oplus(\bbC^i\otimes\bbC^2)}_{q}
\end{align*}
as an $\lie{m}'$-module, which is spherical if and only if $i=1$ or $q=1$.
Suppose $p-i< q$. Then 
\begin{align*}
(\lie{u}')^{-\theta}\simeq 
\underbrace{(\bbC^i\otimes\bbC^2)\oplus\cdots\oplus(\bbC^i\otimes\bbC^2)}_{p-i}
 \oplus (\bbC^i\otimes\bbC^{2(-p+q+i)})
\end{align*}
as an $\lie{m}'$-module.
This is spherical if and only if at least one of the following
 three conditions holds:
\begin{itemize}
\item $i=1$,
\item $p-i=0$ and $q\leq 2$,
\item $p-i=0$ and $i\leq 3$.
\end{itemize}

Suppose that $J_K=\Pi_K\setminus\{\beta_i,\beta_j\}$ with $1\leq i<j\leq p$
 and $Q=Q_{J_K}$.
We take $B$ and $T$ as above.
By putting $J':=\Pi\setminus \{\alpha_i,\alpha_j\}$ and $P'=P_{J'}$,
 we get $P'\cap K=Q$.
Put $r:=\min\{p-j,q\}$.
We have $(\lie{l}',\lie{l}'\cap \lie{k}) \simeq
(\lie{gl}_i\oplus\lie{gl}_{j-i}\oplus\lie{sp}_{n-j},
 \lie{gl}_i\oplus\lie{gl}_{j-i}\oplus\lie{sp}_{p-j}\oplus\lie{sp}_q)$
 and hence $\lie{m}'\simeq\lie{gl}_i\oplus\lie{gl}_{j-i}\oplus
 \underbrace{\lie{sl}_2\oplus\cdots\lie{sl}_2}_r \oplus \lie{sp}_{|p-q-j|}$.
According to \S~\ref{spsp},
 $(\lie{u}')^{-\theta}\simeq (\bbC^i\otimes\bbC^{2q})
 \oplus (\bbC^{j-i}\otimes\bbC^{2q})$
 as an $(L'\cap K)$-module.
Suppose $p-j\geq q$. Then 
\begin{align*}
(\lie{u}')^{-\theta}\simeq 
\underbrace{(\bbC^i\otimes\bbC^2)\oplus\cdots\oplus(\bbC^i\otimes\bbC^2)}_{q}
\oplus
\underbrace{(\bbC^{j-i}\otimes\bbC^2)\oplus\cdots\oplus(\bbC^{j-i}\otimes\bbC^2)}_{q}
\end{align*}
as an $\lie{m}'$-module, which is spherical if and only if $q=1$.
Suppose $p-j< q$. Then 
\begin{align*}
(\lie{u}')^{-\theta}\simeq 
&\underbrace{(\bbC^i\otimes\bbC^2)\oplus\cdots\oplus(\bbC^i\otimes\bbC^2)}_{p-j}
\oplus (\bbC^i\otimes\bbC^{2(-p+q+j)})\\
& \oplus
\underbrace{(\bbC^{j-i}\otimes\bbC^2)\oplus\cdots\oplus(\bbC^{j-i}\otimes\bbC^2)}_{p-j}
\oplus (\bbC^{j-i}\otimes\bbC^{2(-p+q+j)})
\end{align*}
as an $\lie{m}'$-module, which is spherical if and only if $q=1$
 or $(i,j,p)=(1,2,2)$.

Suppose that $J_K=\Pi_K\setminus\{\beta_i,\beta_j\}$ with 
 $1\leq i\leq p < j\leq n$ and $p-i\leq n-j$.
We take $B$ and $T$ corresponding to the following Vogan diagram
\begin{align*}
\begin{xy}
\ar@{-} (0,0) *++!D{\alpha_1} *{\circ}="A"; (10,0) 
\ar@{.} (10,0) ; (20,0) 
\ar@{-} (20,0) ; (25,0) *{\circ}="B"
\ar@{-} "B" ; (35,0) *++!D{\alpha_{j-p}} *{\bullet}="C"
\ar@{-} "C"; (45,0) *{\circ}="D"
\ar@{-} "D"; (50,0) 
\ar@{.} (50,0) ; (60,0) 
\ar@{-} (60,0) ; (65,0) *{\circ}="E"
\ar@{-} "E" ; (75,0) *++!D{\alpha_j} *{\bullet}="F"
\ar@{-} "F" ; (85,0)  *{\circ}="G"
\ar@{-} "G" ; (90,0)
\ar@{.} (90,0); (100,0)
\ar@{-} (100,0) ; (110,0) *{\circ}="H"
\ar@{<=} "H"; (120,0) *++!D{\alpha_n} *{\circ}
\end{xy}
\end{align*}
and we have $\beta_k=\alpha_{k+j-p}$ for $1\leq k\leq p-1$,
 $\beta_p=2(\alpha_{j}+\cdots+\alpha_{n-1})+\alpha_n$,
 $\beta_k=\alpha_{k-p}$ for $p+1\leq k\leq j-1$,
 $\beta_j=\alpha_{j-p}+\cdots+\alpha_j$,
 and $\beta_k=\alpha_k$ for $j+1\leq k\leq n$.
By putting $J':=\Pi\setminus \{\alpha_{j-p},\alpha_{i+j-p}\}$
 and $P'=P_{J'}$, we get $P'\cap K=Q$.
Then $(\lie{l}',\lie{l}'\cap \lie{k})
 \simeq (\lie{gl}_i\oplus\lie{gl}_{j-p}\oplus\lie{sp}_{n-(i+j-p)},
 \lie{gl}_i\oplus\lie{gl}_{j-p}\oplus\lie{sp}_{p-i}\oplus\lie{sp}_{n-j})$
 and $\lie{m}'\simeq\lie{gl}_i\oplus \lie{gl}_{j-p}\oplus
 \underbrace{\lie{sl}_2\oplus\cdots\oplus\lie{sl}_2}_{p-i}
 \oplus\lie{sp}_{q+i-j}$. 
Then we can compute that
\[(\lie{u}')^{-\theta}\simeq (\bbC^i\otimes \bbC^{j-p})
 \oplus ((\bbC^i)^*\otimes \bbC^{j-p})
 \oplus (\bbC^i \otimes\bbC^{2(n-j)}) \oplus (\bbC^{j-p}\otimes \bbC^{2(p-i)})\]
as an $(L'\cap K)$-module.
When restricted to $\lie{m}'$ it decomposes as 
\begin{align*}
&(\bbC^i\otimes \bbC^{j-p})
 \oplus ((\bbC^i)^*\otimes \bbC^{j-p})
 \oplus \underbrace{(\bbC^i\otimes \bbC^2)\oplus \cdots 
 \oplus(\bbC^i\otimes\bbC^2)}_{p-i}\\
&\quad  \oplus (\bbC^i \otimes\bbC^{2(q+i-j)}) 
 \oplus \underbrace{(\bbC^{j-p}\otimes \bbC^2)\oplus \cdots
 \oplus(\bbC^{j-p}\otimes\bbC^2)}_{p-i},
\end{align*}
which is spherical if and only if
\begin{itemize}
\item $p-i=0$ and $i=1$, or 
\item $p-i=0$, $q+i-j=0$, and $j-p=1$.
\end{itemize}
This is equivalent to $\min\{p,q\}=1$ under our assumption.

Consequently, $G/B\times K/Q_{J_K}$ is of finite type if and only if
 at least one of the following holds:
\begin{itemize}
\item $\Pi_K\setminus J_K=\{\beta_1\}$,
\item $\Pi_K\setminus J_K=\{\beta_p\}$ and $q\leq 2$,
\item $\Pi_K\setminus J_K=\{\beta_p\}$ and $p\leq 3$,
\item $\Pi_K\setminus J_K=\{\beta_q\}$ and $p\leq 2$,
\item $\Pi_K\setminus J_K=\{\beta_q\}$ and $q\leq 3$,
\item $\Pi_K\setminus J_K=\{\beta_1,\beta_2\}$ and $p=2$,
\item $\Pi_K\setminus J_K=\{\beta_{p+1},\beta_{p+2}\}$ and $q=2$,
\item $|\Pi_K\setminus J_K|\leq 2$ and $\min\{p,q\}=1$.
\end{itemize}

\subsection{$ \lie{g}_2, \lie{e}_8 $ and  $(\lie{f}_{4}, \lie{sp}_3\oplus\lie{sp}_1)$}
\

For $(\lie{g}, \lie{k})=(\lie{g}_2, \lie{sl}_2\oplus\lie{sl}_2)$,
 $(\lie{f}_{4}, \lie{sp}_3\oplus\lie{sp}_1)$,
 $(\lie{e}_8, \lie{so}_{16})$, and $(\lie{e}_8, \lie{e}_7\oplus\lie{sl}_2)$,
 there does not exist the double flag variety $G/P\times K/B_K$
 of finite type for $P\subsetneq G$.
Therefore, $G/B\times K/Q$ cannot be of finite type for $Q\subsetneq K$
 by Corollary~\ref{corollary:P=B.to.Q=B_K}.

\subsection{$(\lie{f}_{4}, \lie{so}_9)$}
\ 

Let $(\lie{g}, \lie{k})=(\lie{f}_4, \lie{so}_9)$.
We fix the numbering $\beta_1,\beta_2,\beta_3,\beta_4\in\Pi_K$
 in such a way that the Dynkin diagram of $K$ becomes
\begin{align*}
\begin{xy}
\ar@{-} (0,0) *++!D{\beta_1} *{\circ}="A"; (10,0) *++!D{\beta_2} *{\circ}="D"
\ar@{-} "D"; (20,0) *++!D{\beta_3} *{\circ}="E"
\ar@{=>} "E"; (30,0) *++!D{\beta_4} *{\circ}="F"
\end{xy}
\end{align*}

Suppose that $J_K=\Pi_K\setminus\{\beta_1,\beta_2\}$ and $Q=Q_{J_K}$.
We take $B$ and $T$ as in \S~\ref{f4so}, so the Vogan diagram is
\begin{align*}
\begin{xy}
\ar@{-} (0,0) *++!D{\alpha_1} *{\circ}="A";
  (10,0) *++!D{\alpha_2} *{\circ}="B"
\ar@{=>} "B" ; (20,0)  *++!D{\alpha_3} *{\circ}="C"
\ar@{-} "C" ; (30,0) *++!D{\alpha_4} *{\bullet}
\end{xy}
\end{align*}
 and we have $\beta_1=\alpha_2+2\alpha_3+2\alpha_4$,
 $\beta_2=\alpha_1$, $\beta_3=\alpha_2$, $\beta_4=\alpha_3$.
By putting $J':=\Pi\setminus \{\alpha_1,\alpha_4\}$ and $P'=P_{J'}$,
 we get $P'\cap K=Q$.
We saw in \S~\ref{f4so} that $(\lie{u}')^{-\theta}$ is $(L'\cap K)$-spherical.
Since $\lie{l}'=\lie{l}'\cap \lie{k}\simeq
 \lie{gl}_1\oplus\lie{sp}_2\oplus\lie{gl}_1$
 we have $\lie{l}'\cap \lie{k}=\lie{m}'$.
Hence $(\lie{u}')^{-\theta}$ is $M'_0$-spherical as well.

Suppose that $J_K=\Pi_K\setminus\{\beta_3\}$ and $Q=Q_{J_K}$.
We take $B$ and $T$ corresponding to the following Vogan diagram 
\begin{align*}
\begin{xy}
\ar@{-} (0,0) *++!D{\alpha_1} *{\circ}="A";
  (10,0) *++!D{\alpha_2} *{\circ}="B"
\ar@{=>} "B" ; (20,0)  *++!D{\alpha_3} *{\bullet}="C"
\ar@{-} "C" ; (30,0) *++!D{\alpha_4} *{\circ}
\end{xy}
\end{align*}
 and we have $\beta_1=\alpha_2$, $\beta_2=\alpha_1$,
 $\beta_3=\alpha_2+2\alpha_3$, $\beta_4=\alpha_4$.
By putting $J':=\Pi\setminus \{\alpha_3\}$ and $P'=P_{J'}$,
 we get $P'\cap K=Q$.
Theorem~\ref{theorem:k.spherical}
 implies that $(\lie{u}')^{-\theta}$ is $(L'\cap K)$-spherical.
Since $\lie{l}'=\lie{l}'\cap \lie{k}\simeq
 \lie{gl}_3\oplus\lie{sl}_2$ we have $\lie{l}'\cap \lie{k}=\lie{m}'$
 and $(\lie{u}')^{-\theta}$ is $M'_0$-spherical.

Suppose that $J_K=\Pi_K\setminus\{\beta_4\}$ and $Q=Q_{J_K}$.
We take $B$ and $T$ corresponding to the following Vogan diagram 
\begin{align*}
\begin{xy}
\ar@{-} (0,0) *++!D{\alpha_1} *{\circ}="A";
  (10,0) *++!D{\alpha_2} *{\circ}="B"
\ar@{=>} "B" ; (20,0)  *++!D{\alpha_3} *{\bullet}="C"
\ar@{-} "C" ; (30,0) *++!D{\alpha_4} *{\bullet}
\end{xy}
\end{align*}
 and we have $\beta_1=\alpha_2+2\alpha_3$, $\beta_2=\alpha_1$,
 $\beta_3=\alpha_2$, $\beta_4=\alpha_3+\alpha_4$.
By putting $J':=\Pi\setminus \{\alpha_4\}$ and $P'=P_{J'}$,
 we get $P'\cap K=Q$.
We have $(\lie{l}',\lie{l}'\cap \lie{k})\simeq
 (\lie{so}_7\oplus\lie{gl}_1,\lie{so}_6\oplus\lie{gl}_1)$
 and hence $\lie{m}'\simeq \lie{so}_5\oplus\lie{gl}_1$.
We can compute that $(\lie{u}')^{-\theta}\simeq \bbC^4\oplus \bbC$ as
 an $(\lie{l}'\cap \lie{k})$-module, where $\lie{so}_6$ acts 
 on the first factor $\bbC^4$ as a spin representation
 and $\lie{gl}_1$ acts as non-zero scalar on the second factor $\bbC$.
Therefore, $(\lie{u}')^{-\theta}$ is $M'_0$-spherical.

If $J_K=\Pi_K\setminus\{\beta_i,\beta_j\}$ and $\{i,j\}\neq \{1,2\}$,
 then $\dim L'_K\leq 10$.
Hence the dimension condition 
 $\dim L'_K\geq \dim G-\dim K-\rank G=52-36-4=12$
 does not hold.

We conclude that 
 $G/B\times K/Q_{J_K}$ is of finite type if and only if
$|\Pi_K\setminus J_K|=1$ or $\Pi_K\setminus J_K=\{\beta_1,\beta_2\}$.

\subsection{$(\lie{e}_{6}, \lie{sp}_4)$}
\ 

Let $(\lie{g}, \lie{k})=(\lie{e}_6, \lie{sp}_4)$.
Then the dimension condition is
 $\dim L'_K\geq \dim G-\dim K-\rank G= 78-36-6=36=\dim K$.
This does not hold for $Q\subsetneq K$.

\subsection{$(\lie{e}_{6}, \lie{sl}_6\oplus\lie{sl}_2)$}
\ 

Let $(\lie{g}, \lie{k})=(\lie{e}_6, \lie{sl}_6\oplus\lie{sl}_2)$.
We fix the numbering $\beta_1,\dots,\beta_6\in\Pi_K$ in such a way that
 the Dynkin diagram of $K$ becomes
\begin{align*}
\begin{xy}
\ar@{-} (0,0) *++!D{\beta_1} *{\circ}="F"; (10,0) *++!D{\beta_2} 
 *{\circ}="G"
\ar@{-} "G"; (20,0)*++!D{\beta_3}  *{\circ}="H"
\ar@{-} "H"; (30,0)*++!D{\beta_4}  *{\circ}="I"
\ar@{-} "I"; (40,0)*++!D{\beta_5}  *{\circ}="J"
\ar@{} "J"; (50,0) *++!D{\beta_6} *{\circ}
\end{xy}
\end{align*}

Suppose that $J_K=\Pi_K\setminus\{\beta_6\}$ and $Q=Q_{J_K}$.
We take $B$ and $T$ as in \S~\ref{e6sl}, so the Vogan diagram is
\begin{align*}
\begin{xy}
\ar@{-} (0,0) *++!D{\alpha_1} *{\circ}="A"; (10,0) *++!D{\alpha_3} 
 *{\circ}="B"
\ar@{-} "B"; (20,0)*++!U{\alpha_4}  *{\circ}="C"
\ar@{-} "C"; (30,0) *++!D{\alpha_5}  *{\circ}="D"
\ar@{-} "C"; (20,10) *++!D{\alpha_2}  *{\bullet}="G"
\ar@{-} "D"; (40,0) *++!D{\alpha_6}  *{\circ}="E"
\end{xy}
\end{align*}
and we have $\beta_1=\alpha_1$, $\beta_2=\alpha_3$, $\beta_3=\alpha_4$,
 $\beta_4=\alpha_5$, $\beta_5=\alpha_6$, and
 $\beta_6=\alpha_1+2\alpha_2+2\alpha_3+3\alpha_4+2\alpha_5+\alpha_6$.
By putting $J':=\Pi\setminus \{\alpha_2\}$ and $P'=P_{J'}$,
 we get $P'\cap K=Q$.
We saw in \S~\ref{e6sl} that $(\lie{u}')^{-\theta}$ is not $(L'\cap K)$-spherical.

If $\Pi_K\setminus J_K=\{\beta_i\}$ for $1\leq i\leq 5$,
 then $\dim L'_K\leq 28$.
It does not satisfy the dimension condition 
 $\dim L'_K\geq \dim G-\dim K-\rank G= 78-38-6=34$.

Hence there is no $G/B\times K/Q_{J_K}$ of finite type.

\subsection{$(\lie{e}_{6}, \lie{f}_4)$}
\ 

Let $(\lie{g}, \lie{k})=(\lie{e}_6, \lie{f}_4)$.
We fix the numbering $\beta_1,\beta_2,\beta_3,\beta_4\in\Pi_K$
 in such a way that the Dynkin diagram of $K$ becomes
\begin{align*}
\begin{xy}
\ar@{-} (0,0) *++!D{\beta_1} *{\circ}="F"; (10,0) *++!D{\beta_2} 
 *{\circ}="G"
\ar@{=>} "G"; (20,0)*++!D{\beta_3}  *{\circ}="H"
\ar@{-} "H"; (30,0)*++!D{\beta_4}  *{\circ}="I"
\end{xy}
\end{align*}

Suppose that $J_K=\Pi_K\setminus\{\beta_1\}$ and $Q=Q_{J_K}$.
We take $B$ and $T$ as in \S~\ref{e6f4}, so the Vogan diagram is
\begin{align*}
\begin{xy}
\ar@{-} (0,0) *++!D{\alpha_1} *{\circ}="A"; (10,0) *++!D{\alpha_3} 
 *{\circ}="B"
\ar@{-} "B"; (20,0)*++!U{\alpha_4}  *{\circ}="C"
\ar@{-} "C"; (30,0) *++!D{\alpha_5}  *{\circ}="D"
\ar@{-} "C"; (20,10) *++!D{\alpha_2}  *{\circ}="G"
\ar@{-} "D"; (40,0) *++!D{\alpha_6}  *{\circ}="E"
\ar@/_1.2pc/@{<->} "B"; "D"
\ar@/_1.8pc/@{<->} "A"; "E"
\end{xy}
\end{align*}
and we have $\beta_1=\alpha_2|_{\lie{t}^\theta}$,
 $\beta_2=\alpha_4|_{\lie{t}^\theta}$, $\beta_3=\alpha_3|_{\lie{t}^\theta}$,
 and $\beta_4=\alpha_1|_{\lie{t}^\theta}$.
By putting $J':=\Pi\setminus \{\alpha_2\}$ and $P'=P_{J'}$,
 we get $P'\cap K=Q$.
We have $(\lie{l}',\lie{l}'\cap \lie{k}) \simeq
(\lie{sl}_6\oplus\lie{gl}_1, \lie{sp}_3\oplus\lie{gl}_1)$
 and hence
 $\lie{m}'\simeq\lie{sl}_2\oplus\lie{sl}_2\oplus\lie{sl}_2\oplus \lie{gl}_1$.
We can compute 
 $(\lie{u}')^{-\theta}\simeq \bbC^6$
 is a natural representation of $\lie{sp}_3$ in $\lie{l}'\cap \lie{k}$.
Hence $(\lie{u}')^{-\theta}$ decomposes as $\bbC^2\oplus \bbC^2\oplus\bbC^2$
 as an $\lie{m}'$-module, where each $\lie{sl}_2$ in $\lie{m}'$
 acts naturally on one $\bbC^2$ and trivially on the other two.
This is a spherical action.

Suppose that $J_K=\Pi_K\setminus\{\beta_4\}$ and $Q=Q_{J_K}$.
We take $B$ and $T$ as above. Putting
 $J':=\Pi\setminus \{\alpha_1,\alpha_6\}$ and $P'=P_{J'}$,
 we get $P'\cap K=Q$.
We saw in \S~\ref{e6f4} that $(\lie{u}')^{-\theta}$ is not $(L'\cap K)$-spherical.

If $\Pi_K\setminus J_K=\{\beta_i\}$ for $i=2$ or $3$,
 then $\dim L'_K=12$.
It does not satisfy the dimension condition 
 $\dim L'_K\geq \dim G-\dim K-\rank G= 78-52-6=20$.

Hence $G/B\times K/Q_{J_K}$ is of finite type
 if and only if $J_K=\Pi_K\setminus\{\beta_1\}$.

\subsection{$(\lie{e}_{7}, \lie{sl}_8)$}
\ 

Let $(\lie{g}, \lie{k})=(\lie{e}_7, \lie{sl}_8)$.
Then the dimension condition is
 $\dim L'_K\geq \dim G-\dim K-\rank G= 133-63-7=63=\dim K$.
This does not hold for $Q\subsetneq K$.

\subsection{$(\lie{e}_{7}, \lie{so}_{12}\oplus\lie{sl}_2)$}
\

Let $(\lie{g}, \lie{k})=(\lie{e}_7, \lie{so}_{12}\oplus\lie{sl}_2)$.
We fix the numbering $\beta_1,\dots,\beta_7\in\Pi_K$ in such a way that
 the Dynkin diagram of $K$ becomes
\begin{align*}
\begin{xy}
\ar@{-} (0,0) *++!D{\beta_1} *{\circ}="F"; (10,0) *++!D{\beta_2} 
 *{\circ}="H"
\ar@{-} "H" ; (20,0)  *++!D{\beta_3}  *{\circ}="I"
\ar@{-} "I" ; (30,0) *+!DR{\beta_4} *{\circ}="G" 
\ar@{-} "G" ; (35,8.6)  *+!L{\beta_5} *{\circ}
\ar@{-} "G" ; (35,-8.6)  *+!L{\beta_6} *{\circ}
\ar@{} "G" ; (50,0) *++!D{\beta_7} *{\circ}
\end{xy}
\end{align*}

Suppose that $J_K=\Pi_K\setminus\{\beta_7\}$ and $Q=Q_{J_K}$.
We take $B$ and $T$ as in \S~\ref{e7so}, so the Vogan diagram is
\begin{align*}
\begin{xy}
\ar@{-} (0,0) *++!D{\alpha_1} *{\bullet}="A"; (10,0) *++!D{\alpha_3} 
 *{\circ}="B"
\ar@{-} "B"; (20,0)*++!U{\alpha_4}  *{\circ}="C"
\ar@{-} "C"; (30,0) *++!D{\alpha_5}  *{\circ}="D"
\ar@{-} "C"; (20,10) *++!D{\alpha_2}  *{\circ}
\ar@{-} "D"; (40,0) *++!D{\alpha_6}  *{\circ}="E"
\ar@{-} "E"; (50,0) *++!D{\alpha_7}  *{\circ}
\end{xy}
\end{align*}
 and we have $\beta_i=\alpha_{8-i}$ for $1\leq i\leq 6$,
 $\beta_7=2\alpha_1+2\alpha_2+3\alpha_3
 +4\alpha_4+3\alpha_5+2\alpha_6+\alpha_7$.
By putting $J':=\Pi\setminus \{\alpha_1\}$ and $P'=P_{J'}$,
 we get $P'\cap K=Q$.
We saw in \S~\ref{e7so} that $(\lie{u}')^{-\theta}$ is not $(L'\cap K)$-spherical.

If $\Pi_K\setminus J_K=\{\beta_i\}$ for $i\neq 7$,
 then $\dim L'_K\leq 49$.
It does not satisfy the dimension condition 
 $\dim L'_K\geq \dim G-\dim K-\rank G=133-69-7=57$.

Hence there is no $G/B\times K/Q_{J_K}$ of finite type.

\bigskip

We thus conclude that:

\begin{theorem}
\label{theorem:classification.P=B}
Let $G$ be a connected simple algebraic group
 and $(G,K)$ a symmetric pair.
Let $Q$ be a parabolic subgroup of $K$
 corresponding to $J_K \subsetneq \Pi_K$.
Then the double flag variety $ G/B \times K/Q $
 is of finite type 
 if and only if the triple
 $(\lie{g}, \lie{k}, \Pi_K \setminus J_K)$
 appears in Table~\ref{table:g.spherical}.
\end{theorem}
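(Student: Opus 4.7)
The plan is to reduce the classification to known results by exploiting the two structural theorems already established in the paper, and then to work through the symmetric pairs one by one. First I would invoke the standard decomposition of a symmetric pair $(G,K)$ with $G$ simply connected semisimple: either $G$ is simple, or $G=G'\times G'$ with $K=\diag(G')$ and $G'$ simple. In the latter case the double flag variety $G/B\times K/Q$ is a triple flag variety $G'/B_{G'}\times G'/B_{G'}\times G'/Q$, so Corollary~\ref{corollary:PBB} (a consequence of Stembridge's Theorem~\ref{theorem:list.triple}) immediately gives the answer: only $G'=SL_n$ with $Q$ a mirabolic. This accounts for the $\lie{sl}_{n+1}$ line of Table~\ref{table:g.spherical}.

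Next, for $G$ simple, I would further split along whether $(G,K)$ is Hermitian. If it is, then $K=L_{\Pi\setminus\{\beta\}}$ for a suitable simple root $\beta$, and Lemma~\ref{lemma:double.triple2} converts the finiteness question into the triple-flag question for $G/B\times G/P_{J_K}\times G/P_{\opposite{\Pi\setminus\{\beta\}}}$, which is again settled by Theorem~\ref{theorem:list.triple}. Reading off the pairs $(\Pi\setminus J_1,\Pi\setminus J_2)$ from Stembridge's table with $J_1=J_K$ and $\Pi\setminus J_2=\{\beta\}$ produces the Hermitian rows of Table~\ref{table:g.spherical}.

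For the remaining, non-Hermitian simple $G$, the strategy is to apply Theorem~\ref{theorem:case.P=B}: $G/B\times K/Q$ is of finite type iff the adjoint action of $M'_0$ on $(\lie{u}')^{-\theta}$ is spherical, where $P'\supset B$ is a $\theta$-stable standard parabolic with $P'\cap K=Q$, and $M'=\Pmin'\cap K$ for a minimal $\theta$-split parabolic $\Pmin'$ of $L'$. The procedure for each $(G,K)$ is then: (i) choose a Borel $B_K\subset K$ and label $\Pi_K$; (ii) for each candidate $J_K\subsetneq\Pi_K$, select a $\theta$-stable Borel $B$ of $G$ (which may depend on $J_K$) so that $B\cap K=B_K$ and there is a standard $\theta$-stable $P'=P_{J'}$ with $P'\cap K=Q_{J_K}$; (iii) compute $\lie{l}'\cap\lie{k}$ and decompose $(\lie{u}')^{-\theta}$ into irreducible $(L'\cap K)$-constituents via the root system computation $\Delta(\lie{u}',\lie{t}^\theta)$ (already carried out in \S~\ref{section:k.spherical}); (iv) read off $\lie{m}'$ from the tables of $M$-parts (e.g.\ \cite[Appendix C]{Knapp.2002}), restrict $(\lie{u}')^{-\theta}$ from $L'\cap K$ to $M'_0$, and test sphericity against Benson--Ratcliff~\cite{Benson.Ratcliff.1996}. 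Two uniform shortcuts cut down the casework enormously: the dimension bound $\dim L'_K\geq \dim G-\dim K-\rank G$, and Corollary~\ref{corollary:P=B.to.Q=B_K}, which says that finiteness of $G/B\times K/Q$ forces finiteness of $G/P'\times K/B_K$. Hence any candidate $J_K$ whose associated $J'$ does not already appear in Table~\ref{table:k.spherical} can be discarded at once.

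The main obstacle is step (iv): the restriction from $L'\cap K$ to $M'_0$ can strip away a semisimple factor that was essential for sphericity (for instance when $\lie{sp}_q$ in $\lie{l}'\cap\lie{k}$ reduces to $\underbrace{\lie{sl}_2\oplus\cdots\oplus\lie{sl}_2}_{r}\oplus\lie{sp}_{|p-q-i|}$ inside $\lie{m}'$), so a representation that is $(L'\cap K)$-spherical may cease to be $M'_0$-spherical. This is precisely what eliminates many would-be-finite cases (e.g.\ most of $(\lie{so}_n,\lie{so}_p\oplus\lie{so}_q)$ with $p,q\geq 3$, and most of the exceptional cases). The delicate work is therefore in tracking, pair by pair, the embedding $M'_0\hookrightarrow L'\cap K$ and the resulting $M'_0$-isotypic decomposition of $(\lie{u}')^{-\theta}$, and then comparing against the Benson--Ratcliff list to produce exactly the entries of Table~\ref{table:g.spherical}.
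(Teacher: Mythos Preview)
Your strategy is exactly the paper's: reduce the Hermitian cases to Stembridge via Lemma~\ref{lemma:double.triple2}, and for the non-Hermitian simple pairs apply Theorem~\ref{theorem:case.P=B}, computing $\lie{m}'$ from the $M$-part tables, restricting $(\lie{u}')^{-\theta}$ from $L'\cap K$ to $M'_0$, and checking against Benson--Ratcliff, with the dimension bound and Corollary~\ref{corollary:P=B.to.Q=B_K} pruning the casework. One small correction: the theorem already assumes $G$ simple, so the diagonal case $G=G'\times G'$ lies outside its scope and contributes no row to Table~\ref{table:g.spherical} (there is no $\lie{sl}_{n+1}$ line there); the paper mentions the diagonal case only as context before specializing to simple $G$.
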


\begin{remark}
\label{remark:labeling}
In Table~\ref{table:g.spherical},
 the labeling $\beta_1,\beta_2,\dots$ of the simple roots for $\lie{k}$
 is not unique up to isomorphisms in the cases
 where $(\lie{g},\lie{k})=(\lie{sl}_{p+q+2}, \lie{sl}_{p+1}
 \oplus \lie{sl}_{q+1} \oplus \bbC)$ with $1\leq p\leq q$,
 $(\lie{so}_{2n+2}, \lie{so}_{2n} \oplus \bbC)$ with $n=4$,
 and $(\lie{so}_{2n+1}, \lie{so}_{2n})$ with $n=4$.
In order to fix this, we describe the $[\lie{k},\lie{k}]$-module
 $\lie{g}^{-\theta}$ in terms of $\beta_i$.
We denote by $\omega_i \in (\lie{t}^\theta\cap[\lie{k},\lie{k}])^*$
 the fundamental weight for $[\lie{k},\lie{k}]$
 corresponding to $\beta_i$.
Write $V(\lambda)$ for the irreducible $[\lie{k},\lie{k}]$-module
 with highest weight $\lambda$.

For $(\lie{g},\lie{k})=(\lie{sl}_{p+q+2}, \lie{sl}_{p+1}
 \oplus \lie{sl}_{q+1} \oplus \bbC)$ with $0\leq p\leq q$
 in Table~\ref{table:g.spherical},
 we label $\beta_i$ so that
 $\lie{g}^{-\theta}\simeq V(\omega_1+\omega_{p+q})
 \oplus V(\omega_p+\omega_{p+1})$ if $p>0$ and 
 $\lie{g}^{-\theta}\simeq V(\omega_1) \oplus V(\omega_q)$ if $p=0$.

For $(\lie{g},\lie{k})=(\lie{so}_{2n+2}, \lie{so}_{2n} \oplus \bbC)$
 in Table~\ref{table:g.spherical},
 we label $\beta_i$ so that
 $\lie{g}^{-\theta}\simeq V(\omega_1)\oplus V(\omega_1)$.

For $(\lie{g},\lie{k})=(\lie{so}_{2n+1}, \lie{so}_{2n})$
 in Table~\ref{table:g.spherical},
 we label $\beta_i$ so that
 $\lie{g}^{-\theta}\simeq V(\omega_1)$.
\end{remark}

\begin{longtable}{c|c|c}
\hline
\rule[0pt]{0pt}{12pt}
$\lie{g}$ & $\lie{k}$ & $\Pi_K \setminus J_K\ (Q=Q_{J_K})$
\\
\hline\hline

\rule[0pt]{0pt}{12pt}
\rule[-5pt]{0pt}{0pt}
$\lie{sl}_{2n}$ & $\lie{sp}_n$ & 
\begin{xy}
\ar@{-} (0,0) *++!D{\beta_1} *{\circ}="A"; (10,0) *++!D{\beta_2} 
 *{\circ}="B"
\ar@{-} "B"; (20,0)="C" 
\ar@{.} "C"; (30,0)="D" 
\ar@{-} "D"; (40,0) *++!D{\beta_{n-1}} *{\circ}="E"
\ar@{<=} "E"; (50,0) *++!D{\beta_n} *{\circ}="F"
\end{xy}
\\

\rule[0pt]{0pt}{12pt}
 $n\geq 2$ &   & $\{\beta_1\}$, \\

\rule[0pt]{0pt}{12pt}
  &  & $\{\beta_3\}$ if $n=3$, \\

\rule[0pt]{0pt}{12pt}
\rule[-5pt]{0pt}{0pt}
  &  & any subset of $\Pi_K$ if $n=2$ \\

\hline

\rule[0pt]{0pt}{12pt}
$\lie{sl}_{p+q+2}$ & $\lie{sl}_{p+1} \oplus \lie{sl}_{q+1} \oplus \bbC$ &
\begin{xy}
\ar@{-} (0,0) *++!D{\beta_1} *{\circ}="A"; (10,0) *++!D{\beta_2} 
 *{\circ}="B"
\ar@{-} "B"; (20,0)="C" 
\ar@{.} "C"; (30,0)="D" 
\ar@{-} "D"; (40,0) *++!D{\beta_p} *{\circ}="E"
\end{xy}
\\

\rule[0pt]{0pt}{12pt}
\rule[-5pt]{0pt}{0pt}
$p+q\geq 1$ & $0\leq p\leq q$ &  
\begin{xy}
\ar@{-} (0,10) *++!D{\beta_{p+1}} *{\circ}="A"; (10,10) *++!D{\beta_{p+2}} 
 *{\circ}="B"
\ar@{-} "B"; (20,10)="C" 
\ar@{.} "C"; (30,10)="D" 
\ar@{-} "D"; (40,10) *++!D{\beta_{p+q}} *{\circ}="E"
\end{xy}
\\

\rule[0pt]{0pt}{12pt}
  &  &  
 $\{\beta_1\}$, $\{\beta_p\}$,  $\{\beta_{p+1}\}$, $\{\beta_{p+q}\}$, \\

\rule[0pt]{0pt}{12pt}
  &  &  
 $\{\beta_i\} (\forall i)$ if $p=1$, \\

\rule[0pt]{0pt}{12pt}
\rule[-5pt]{0pt}{0pt}
  &  &  
 any subset of $\Pi_K$ if $p=0$ \\

\hline

\rule[0pt]{0pt}{12pt}
\rule[-5pt]{0pt}{0pt}
$\lie{so}_{2n+2}$ & $\lie{so}_{2n}\oplus\bbC$ &
\begin{xy}
\ar@{-} (0,0) *++!D{\beta_1} *{\circ}="A"; (10,0)="C" 
\ar@{.} "C"; (20,0)="D" 
\ar@{-} "D"; (30,0) *+!DR{\beta_{n-2}} *{\circ}="E"
\ar@{-} "E"; (35,8.6)  *+!L{\beta_{n-1}} *{\circ}
\ar@{-} "E"; (35,-8.6)  *+!L{\beta_n} *{\circ}
\end{xy}
\\

\rule[0pt]{0pt}{12pt}
\rule[-5pt]{0pt}{0pt}
$n\geq 3$  &  & $\{\beta_{n-1}\}$, $\{\beta_n\}$
\\
\hline

\rule[0pt]{0pt}{12pt}
\rule[-5pt]{0pt}{0pt}
$\lie{so}_{2n+1}$ & $\lie{so}_{2n}$ &
\begin{xy}
\ar@{-} (0,0) *++!D{\beta_1} *{\circ}="A"; (10,0)="C" 
\ar@{.} "C"; (20,0)="D" 
\ar@{-} "D"; (30,0) *+!DR{\beta_{n-2}} *{\circ}="E"
\ar@{-} "E"; (35,8.6)  *+!L{\beta_{n-1}} *{\circ}
\ar@{-} "E"; (35,-8.6)  *+!L{\beta_n} *{\circ}
\end{xy}
\\

\rule[0pt]{0pt}{12pt}
\rule[-5pt]{0pt}{0pt}
$n\geq 3$  & & 
 any subset of $\Pi_K$
\\
\hline

\rule[0pt]{0pt}{12pt}
\rule[-5pt]{0pt}{0pt}
$\lie{so}_{2n+2}$ & $\lie{so}_{2n+1}$ &
\begin{xy}
\ar@{-} (0,0) *++!D{\beta_1} *{\circ}="A"; (10,0)="C" 
\ar@{.} "C"; (20,0)="D" 
\ar@{-} "D"; (30,0) *++!D{\beta_{n-1}} *{\circ}="E"
\ar@{=>} "E"; (40,0) *++!D{\beta_n} *{\circ}="F"
\end{xy}
\\

\rule[0pt]{0pt}{12pt}
\rule[-5pt]{0pt}{0pt}
$n\geq 3$ & & 
 any subset of $\Pi_K$
\\
\hline

\rule[0pt]{0pt}{12pt}
\rule[-5pt]{0pt}{0pt}
$\lie{so}_{2n+2}$ & $\lie{sl}_{n+1} \oplus \bbC$ &
\begin{xy}
\ar@{-} (0,0) *++!D{\beta_1} *{\circ}="A"; (10,0) *++!D{\beta_2} 
 *{\circ}="B"
\ar@{-} "B"; (20,0)="C" 
\ar@{.} "C"; (30,0)="D" 
\ar@{-} "D"; (40,0) *++!D{\beta_{n}} *{\circ}="E"
\end{xy}
\\

\rule[0pt]{0pt}{12pt}
\rule[-5pt]{0pt}{0pt}
$n\geq 3$ & & 
 $\{\beta_1\}$, $\{\beta_n\}$
\\
\hline

\rule[0pt]{0pt}{12pt}
$\lie{sp}_{p+q}$ & $\lie{sp}_{p} \oplus \lie{sp}_{q}$ &
\begin{xy}
\ar@{-} (0,0) *++!D{\beta_1} *{\circ}="A"; (10,0)="C" 
\ar@{.} "C"; (20,0)="D" 
\ar@{-} "D"; (30,0) *++!D{\beta_{p-1}} *{\circ}="E"
\ar@{<=} "E"; (40,0) *++!D{\beta_p} *{\circ}="F"
\end{xy}
\\

\rule[0pt]{0pt}{12pt}
\rule[-5pt]{0pt}{0pt}
&$1\leq p\leq q$& 
\begin{xy}
\ar@{-} (0,0) *++!D{\beta_{p+1}} *{\circ}="A"; (10,0)="C" 
\ar@{.} "C"; (20,0)="D" 
\ar@{-} "D"; (30,0) *++!D{\beta_{p+q-1}} *{\circ}="E"
\ar@{<=} "E"; (40,0) *++!D{\beta_{p+q}} *{\circ}="F"
\end{xy}
\\

\rule[0pt]{0pt}{12pt}
  &  &  
 $\{\beta_1\}$, $\{\beta_{p+1}\}$,\\

\rule[0pt]{0pt}{12pt}
  &  &  
 $\{\beta_p\}$ if $p\leq 3$,\ \ $\{\beta_{p+q}\}$ if $p\leq 2$,
 \ \  $\{\beta_{p+q}\}$ if $q\leq 3$, \\

\rule[0pt]{0pt}{12pt}
  &  &  
 $\{\beta_1, \beta_2\}$ if $p=2$,
 \ \ $\{\beta_{p+1}, \beta_{p+2}\}$ if $q=2$, \\

\rule[0pt]{0pt}{12pt}
  &  &  
 $\{\beta_i\} (\forall i)$ if $p=1$, 
 \ \ $\{\beta_i, \beta_j\} (\forall i, j)$ if $p=1$ \\

\hline

\rule[0pt]{0pt}{12pt}
\rule[-5pt]{0pt}{0pt}
$\lie{f}_4$ & $\lie{so}_9$ & 
\begin{xy}
\ar@{-} (0,0) *++!D{\beta_1} *{\circ}="A"; (10,0) *++!D{\beta_2} 
 *{\circ}="B"
\ar@{-} "B"; (20,0)*++!D{\beta_3}  *{\circ}="C"
\ar@{=>} "C"; (30,0) *++!D{\beta_4}  *{\circ}="D"
\end{xy}
\\

\rule[0pt]{0pt}{12pt}
\rule[-5pt]{0pt}{0pt}
  &  &  
 $\{\beta_i\} (\forall i)$,
 $\{\beta_1, \beta_2\}$ \\

\hline

\rule[0pt]{0pt}{12pt}
\rule[-5pt]{0pt}{0pt}
$\lie{e}_6$ & $\lie{so}_{10} \oplus \bbC$ &
\begin{xy}
\ar@{-} (0,0) *++!DR{\beta_1} *{\circ}="A"; (10,0) *+!DR{\beta_2} 
 *{\circ}="B"
\ar@{-} "B"; (20,0) *+!DR{\beta_3} *{\circ}="C" 
\ar@{-} "C"; (25,8.6)  *+!L{\beta_4} *{\circ}
\ar@{-} "C"; (25,-8.6)  *+!L{\beta_5} *{\circ}
\end{xy}
\\

\rule[0pt]{0pt}{12pt}
\rule[-5pt]{0pt}{0pt}
  &  & 
 $\{\beta_1\}$\\

\hline

\rule[0pt]{0pt}{12pt}
\rule[-5pt]{0pt}{0pt}
$\lie{e}_6$ & $\lie{f}_4$ &
\begin{xy}
\ar@{-} (0,0) *++!D{\beta_1} *{\circ}="A"; (10,0) *++!D{\beta_2} 
 *{\circ}="B"
\ar@{=>} "B"; (20,0)*++!D{\beta_3}  *{\circ}="C"
\ar@{-} "C"; (30,0) *++!D{\beta_4}  *{\circ}="D"
\end{xy}
\\

\rule[0pt]{0pt}{12pt}
\rule[-5pt]{0pt}{0pt}
  &  & 
 $\{\beta_1\}$\\

\hline

\caption{$G$-spherical $G/Q$}
\label{table:g.spherical}
\vphantom{$\Biggm|$}
\end{longtable}

\bibliographystyle{amsalpha}

\end{document}